\newtheorem{theorem}{Theorem}[section]
\newtheorem{prop}[theorem]{Proposition}
\newtheorem{lemma}[theorem]{Lemma}
\newtheorem{cor}[theorem]{Corollary}
\newtheorem{conj}[theorem]{Conjecture}
\newtheorem{cor*}{Corollary}
\newtheorem{prop*}{Proposition}
\theoremstyle{definition}
\newtheorem{definition}[theorem]{Definition}
\newtheorem{defn}[theorem]{Definition}
\newtheorem{rem}[theorem]{Remark}
\newcommand{\Mod}{\mathrm{Mod}}
\newcommand{\Aut}{\mathrm{Aut}_k}
\newcommand{\LMod}{\mathrm{LMod}}
\newcommand{\SMod}{\mathrm{SMod}}
\newcommand{\Homeo}{\mathrm{Homeo}}
\newcommand{\lcm}{\mathrm{lcm}}
\newcommand{\lb}{\llbracket}
\newcommand{\rb}{\rrbracket}
\renewcommand{\P}{\mathcal{P}}
\newcommand{\Z}{\mathbb{Z}}
\renewcommand{\O}{\mathcal{O}}
\newcommand{\F}{\mathcal{F}}
\newcommand{\G}{\mathcal{G}}
\newcommand{\D}{\mathcal{D}}
\newcommand{\orb}{\mathrm{orb}}
\newcommand{\M}{\mathcal{M}}
\newcommand{\Dic}{\mathrm{Dic}}
\begin{document}
\title[Metacyclic actions on surfaces]{Metacyclic actions on surfaces}

\author[K. Rajeevsarathy]{Kashyap Rajeevsarathy}
\address{Department of Mathematics\\
	Indian Institute of Science Education and Research Bhopal\\
	Bhopal Bypass Road, Bhauri \\
	Bhopal 462 066, Madhya Pradesh\\
	India}
\email{kashyap@iiserb.ac.in}
\urladdr{https://home.iiserb.ac.in/$_{\widetilde{\phantom{n}}}$kashyap/}

\author[A. Sanghi]{Apeksha Sanghi}
\address{Department of Mathematics\\
	Indian Institute of Science Education and Research Bhopal\\
	Bhopal Bypass Road, Bhauri \\
	Bhopal 462 066, Madhya Pradesh\\
	India}
\email{apeksha16@iiserb.ac.in}

\subjclass[2020]{Primary 57K20, Secondary 57M60}

\keywords{surface; mapping class; finite order maps; metacyclic subgroups}

\maketitle

\begin{abstract}
	Let $\mathrm{Mod}(S_g)$ be the mapping class group of the closed orientable surface $S_g$ of genus $g\geq 2$. In this paper, we derive necessary and sufficient conditions under which two torsion elements in $\mathrm{Mod}(S_g)$ will have conjugates that generate a finite metacyclic subgroup of $\mathrm{Mod}(S_g)$. This yields a complete solution to the problem of liftability of periodic mapping classes under finite cyclic covers. As applications of the main result, we show that $4g$ is a realizable upper bound on the order of a non-split metacyclic action on $S_g$ and this bound is realized by the action of a dicyclic group. Moreover, we give a complete characterization of the dicyclic subgroups of $\mathrm{Mod}(S_g)$ up to a certain equivalence that we will call weak conjugacy. Furthermore, we show that every periodic mapping class in a non-split metacyclic subgroup of $\mathrm{Mod}(S_g)$ is reducible.  We provide necessary and sufficient conditions under which a non-split metacyclic action on $S_g$ factors via a split metacyclic action. Finally, we provide a complete classification of the weak conjugacy classes of the finite non-split metacyclic subgroups of $\mathrm{Mod}(S_{10})$ and $\mathrm{Mod}(S_{11})$.
\end{abstract}

\section{Introduction}
\label{sec:intro}
Let $S_g$ be the closed orientable surface of genus $g \geq 0$, $\Homeo^+(S_g)$ be the group of orientation-preserving homeomorphisms of $S_g$, and let $\Mod(S_g)$ be the mapping class group of $S_g$. Given periodic elements $F,G \in \Mod(S_g)$ such that $\langle F, G \rangle$ is finite, a pair of conjugates $F',G'$ (of $F,G$ resp.) may (or may not) generate a subgroup isomorphic to $\langle F, G \rangle$ (see \cite{NKA}). A natural question that arises in this context is whether one can derive equivalent conditions under which $\langle F',G' \rangle \cong \langle F,G \rangle$. In this paper, we answer this question in the affirmative for the case when $\langle F,G \rangle$ is a metacyclic group~\cite{CEH} by 
deriving elementary number theoretic conditions for the same. Moreover, considering the fact that every cyclic subgroup of $\Mod(S_g)$ that lifts under a finite cyclic cover lifts to metacyclic group (see~\cite{MW}), our main result can also be viewed as a solution to the problem of liftability of periodic mapping classes under such covers up to conjugacy. Our main result (see Theorem~\ref{thm:main}) is a complete generalization of the main results in~\cite{DR} and~\cite{NKA}, where the analogous problem for split metacyclic groups was considered. The proof of this result applies the theory of group actions on surfaces~\cite{SK1,M1} and Thurston's orbifold theory~\cite[Chapter 13]{WT}. 

A finite \textit{metacyclic group $\M(u,n,r,k)$ of order $u\cdot n$, amalgam $r$ and twist factor $k$} admits a presentation of the form
$$\langle F,G \,\mid \, G^u = F^r, F^n = 1, G^{-1}FG = F^k \rangle.$$ A metacyclic group $\M(u,n,r,k)$ is said to be \textit{split} if $\M(u,n,r,k) \cong \Z_{n'}\rtimes\Z_{m'}$ for some positive integers $m',n'$. Consider a finite metacyclic subgroup $H = \langle F,G \rangle$ of $\Mod(S_g)$ as above. In view of the Nielsen realization theorem~\cite{SK,JN}, the $H$ acts on $S_g$ yielding a quotient orbifold $\O_H = S_g/H$, and $F$ has a Nielsen representative $\F \in \Homeo^+(S_g)$ of the same order. In a recent paper~\cite{ADDR}, a symplectic criterion has been derived for the liftability of mapping classes under regular cyclic covers. Moreover, it is well-known~\cite{AW} that for a $G \in \Mod(S_g)$ of finite order, $|G| \leq 4g+2$. These results motivate the following immediate corollary of our main result (from Section~\ref{sec:main}). 
\begin{cor*}
Let $H = \langle F,G \rangle$ be a metacyclic subgroup of $\Mod(S_{n(g-1)+1})$, where $|F| = n$ and $\langle \F \rangle$ acts freely on $S_{n(g-1)+1}$ inducing a regular cover $S_{n(g-1)+1} \to S_g$. 
\begin{enumerate}[(i)]
\item If $\O_H$ has genus zero, then $H$ is a split metacyclic group. 
\item If $n$ is prime and $(4g+2) \mid n-1$, then there exists a $\bar{G} \in \Mod(S_g)$ of order $4g+2$ that lifts to $G$.
\end{enumerate}
\end{cor*}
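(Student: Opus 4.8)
The plan is to derive both parts from the main result (Theorem~\ref{thm:main}) together with the geometry of the tower of covers determined by the normal free subgroup $\langle \F\rangle$. First I would fix the geometric setup. Since $\langle \F\rangle\cong\Z_n$ acts freely on $S_{n(g-1)+1}$, the Riemann--Hurwitz formula forces the quotient to be $S_g$, so that $S_{n(g-1)+1}\to S_g$ is exactly the regular $\Z_n$-cover named in the hypothesis. Because $\langle F\rangle$ is normal in $H$, the cyclic quotient $H/\langle F\rangle\cong\Z_u$ descends to an action on $S_g$ generated by the image $\bar G$ of $G$, and $\O_H=S_g/\langle\bar G\rangle$. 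Thus ``$\O_H$ has genus zero'' in (i) says precisely that the induced $\Z_u$-action on $S_g$ has a sphere as its underlying quotient surface, and in (ii) the order of $\bar G$ equals $u$.

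For (i) I would first translate splitness into arithmetic. Conjugating $F$ by $G^u=F^r$ inside $\langle F\rangle$ gives $k^u\equiv 1\pmod n$, and a direct expansion of $(GF^s)^u=F^{\,r+sN}$, with $N=1+k+\cdots+k^{u-1}\bmod n$, shows that the extension $1\to\langle F\rangle\to H\to\Z_u\to 1$ splits if and only if $\gcd(N,n)\mid r$. The remaining task is to show the genus-zero hypothesis forces this divisibility. Here I would use the surjection $\Phi\colon\pi_1^{\mathrm{orb}}(\O_H)\twoheadrightarrow H$ induced by the action, whose kernel is the surface group $\pi_1(S_{n(g-1)+1})$. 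In genus zero, $\pi_1^{\mathrm{orb}}(\O_H)$ is generated by elliptic elements $x_1,\dots,x_s$ subject only to $x_j^{m_j}=1$ and the single surface relation $\prod_j x_j=1$; since $\langle\F\rangle$ acts freely, each image $\xi_j=\Phi(x_j)$ satisfies $\langle\xi_j\rangle\cap\langle F\rangle=1$, so $\xi_j=G^{c_j}F^{a_j}$ where the order of $c_j$ in $\Z_u$ equals $m_j$. Expanding $\prod_j\xi_j=1$ by collecting $G$- and $F$-exponents (using $G^{-1}FG=F^k$, $G^u=F^r$, and $\sum_j c_j\equiv 0\pmod u$) yields a congruence expressing $r$ as a $\Z$-linear combination of the local data whose coefficients are partial sums of powers of $k$. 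I expect this to collapse to exactly $\gcd(N,n)\mid r$, giving splitness. This rotation-number bookkeeping is the main obstacle, and I would isolate it as the one genuinely computational step; the dicyclic group $\M(2,2m,m,-1)$, where $N\equiv 0$ and $\gcd(N,n)=2m\nmid m$, gives a useful consistency check that such an action must have positive-genus quotient.

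For (ii) the argument is constructive. Since $n$ is prime, $\Z_n^{\times}$ is cyclic of order $n-1$, so $(4g+2)\mid(n-1)$ guarantees an element $k\in\Z_n^{\times}$ of multiplicative order exactly $4g+2$; this will be the twist factor, with $u=4g+2$. On the base I would take the classical Wiman action: $\Z_{4g+2}$ acting on $S_g$ with genus-zero quotient and cone data $(2,\,2g+1,\,4g+2)$, realizing the maximal order $4g+2$ permitted by the Wiman bound~\cite{AW}, and take $\bar G$ to be a generator. It remains to show this $\bar G$ lifts, through a regular free $\Z_n$-cover, to a mapping class $G$ with $\langle F,G\rangle\cong\M(4g+2,n,r,k)$. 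I would produce the cover from a homomorphism $\phi\colon\pi_1(S_g)\to\Z_n$ that is an eigenvector of $\bar G_{*}$ on $H^1(S_g;\Z_n)\cong\Z_n^{2g}$ with eigenvalue $k$: such a $\phi$ exists precisely because $\bar G$ has order $4g+2$ and, after reducing mod the prime $n$ (which contains a primitive $(4g+2)$-th root of unity, namely $k$), $\bar G_{*}$ acquires $k$ as an eigenvalue. Feeding the data set of the order-$(4g+2)$ action and the twist $k$ into the liftability criterion of Theorem~\ref{thm:main} then yields the lift $G$, which projects to a $\bar G$ of order $4g+2$. The only delicate point is matching the geometric liftability of the Wiman action with the prescribed algebraic twist, i.e.\ verifying that the eigenvalue $k$ is indeed attained; once the data set is checked against Theorem~\ref{thm:main}, the claim follows.
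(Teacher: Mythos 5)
Your reduction in part (i) — freeness forces $\langle \phi_H(\xi_j)\rangle \cap \langle F\rangle = 1$ for every elliptic generator — is exactly the paper's starting point in its proof of Corollary~\ref{cor:free}, but the step you isolate as ``the one genuinely computational step'' is a genuine gap, and as sketched it would fail. From the long relation alone you only get the congruence $A \equiv -wr \pmod n$ (condition (iv) of Definition~\ref{defn:meta_cyc_dataset}), and together with the per-generator order relations this system does \emph{not} formally imply $\gcd(N,n)\mid r$: it is also satisfied by tuples whose images generate a proper subgroup of $H$. Concretely, for the non-split group $\M(4,8,4,-1)$ (which occurs in Table~\ref{tab:meta_dsets2}), the tuple $c_1=c_2=2$, $a_1=a_2=2$ satisfies $\sum_j c_j \equiv 0 \pmod 4$, the long relation with $w=1$, and the order-$2$ conditions with $\langle G^2F^2\rangle\cap\langle F\rangle=1$, yet $\gcd(N,8)=8 \nmid 4$. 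So your collapse must invoke surjectivity of $\phi_H$; when some $c_j$ is a unit mod $u$ this is easy (then $\langle\phi_H(\xi_j)\rangle$ is itself a complement), but when no single $c_j$ generates $\Z_u$ you must combine several generators, i.e., control products $\prod_j \phi_H(\xi_j)^{p_j}$ — which is precisely condition (v) of Definition~\ref{defn:meta_cyc_dataset}, and is where the real work lies. The paper avoids this arithmetic entirely by a group-theoretic induction: each $\langle G^{\gamma_i},F\rangle$ is split, and writing $\langle G^{\gamma_1},G^{\gamma_2}\rangle = \langle G^{\gcd(\gamma_1,\gamma_2)}\rangle$ it shows inductively that $\langle G^{\gamma_1},\dots,G^{\gamma_\ell},F\rangle$ is split; genus zero plus surjectivity of $\phi_H$ then identify this group with $H$. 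Your sketch has no substitute for that combination step.

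For part (ii) your route is genuinely different — constructing the cover as the kernel of a $\phi \in H^1(S_g;\Z_n)$ that is an eigenvector of $\bar G_*$ with eigenvalue $k$ — and it can be made to work, but the point you defer (``verifying that the eigenvalue $k$ is indeed attained'') is the entire content and is not automatic: $H^1(S_g;\Z_n)$ has dimension $2g$ while $\Z_{4g+2}$ has $4g+1$ nontrivial characters, so fewer than half of them occur, and one must actually compute the spectrum of the Wiman action (e.g., on $y^2 = x^{2g+1}-1$ the eigenvalues on holomorphic differentials are $-\zeta_{2g+1}^{\,i}$, $1\le i\le g$, so every primitive $(4g+2)$-th root of unity mod $n$ does occur — but you never establish this). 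Moreover your final step, ``feeding the data set into Theorem~\ref{thm:main},'' is exactly what the paper does directly and without any homological input: it exhibits the explicit metacyclic data set $((n\cdot 4g+2,n,k),0;[(1,2),(1,n),2],[(1,2g+1),(n-k^2,n),2g+1],[(2g-1,4g+2),(0,1),4g+2])$, checks it against Definition~\ref{defn:meta_cyc_dataset} via Proposition~\ref{prop:main}, and reads off $\D_1 = D_F = (n,g,1;)$ together with $D_{\bar G}$ equal to the Wiman data set. So in both parts your skeleton is reasonable, but the two steps you flag as ``expected to collapse'' and ``delicate'' are precisely the unproven cores of the argument.
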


 In Section~\ref{sec:appl}, we provide several applications of our main theorem. A metacyclic group of the form $\M(2,2n,n,-1)$ is called a \textit{dicyclic} group denoted by $\Dic_n$. As a first application, we derive a realizable bound on $|H|$ when $H < \Mod(S_g)$ is a non-split metacyclic subgroup. 

\begin{prop*}
\label{prop1}
Let $H <  \Mod(S_g)$ be a non-split metacyclic subgroup. Then $|H| \leq 4g$ and this bound is realized when $H \cong \Dic_g$.
\end{prop*}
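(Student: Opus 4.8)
The plan is to establish the two assertions separately: the inequality $|H|\le 4g$ via a Riemann--Hurwitz estimate constrained by the metabelian structure of $H$, and its sharpness by exhibiting an explicit genus-$g$ action of $\Dic_g$.

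I begin with sharpness, since it also pins down the extremal signature. The action is produced from a surjection $\pi_1^{\mathrm{orb}}(\O)\twoheadrightarrow\Dic_g$ with torsion-free kernel, via the standard correspondence between such surjections and $\Dic_g$-actions with quotient $\O$. I would take $\O=S_0(4,4,2g)$, whose orbifold group is $\langle x_1,x_2,x_3 \mid x_1^4=x_2^4=x_3^{2g}=x_1x_2x_3=1\rangle$, and, in the presentation $\Dic_g=\langle F,G\mid F^{2g}=1,\ G^2=F^g,\ G^{-1}FG=F^{-1}\rangle$, send $x_1\mapsto G$, $x_2\mapsto GF^{g-1}$, $x_3\mapsto F$. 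A direct check gives $|G|=|GF^{g-1}|=4$ and $|F|=2g$, while $x_1x_2x_3\mapsto G\cdot GF^{g-1}\cdot F=F^{g}F^{g-1}F=F^{2g}=1$; hence the map is a well-defined surjection preserving the orders of the elliptic generators, so its kernel $K$ is torsion-free and $\h/K$ is a closed orientable surface on which $\Dic_g$ acts faithfully. Riemann--Hurwitz gives $2g-2=4g\bigl(1-\tfrac14-\tfrac14-\tfrac1{2g}\bigr)=2g-2$, so this surface has genus $g$ and $|\Dic_g|=4g$.

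For the upper bound, I use that a non-split metacyclic $H$ acts faithfully on $S_g$ with quotient $\O_H$ of signature $(h;m_1,\dots,m_s)$, so Riemann--Hurwitz reads $2g-2=|H|\,\bigl(-\chi^{\mathrm{orb}}(\O_H)\bigr)$ with $-\chi^{\mathrm{orb}}(\O_H)=2h-2+\sum_{i=1}^s\bigl(1-\tfrac1{m_i}\bigr)$. Eliminating $g$ from this identity shows that $|H|\le 4g$ is equivalent to the $g$-free inequality
$$|H|\,\bigl(1-2\,(-\chi^{\mathrm{orb}}(\O_H))\bigr)\le 4,$$
which is automatic as soon as $-\chi^{\mathrm{orb}}(\O_H)\ge\tfrac12$, in particular whenever $h\ge 1$, and which at the extremal signature $(0;4,4,2g)$ evaluates to $4g\cdot\tfrac1g=4$. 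It therefore remains to handle genus-zero signatures with $-\chi^{\mathrm{orb}}<\tfrac12$. Here the decisive constraint is group-theoretic: since $H$ is metabelian and $2$-generated, the abelianization $H^{\mathrm{ab}}$ must be a quotient of $H_1(\O_H)=\bigl(\bigoplus_i\Z_{m_i}\bigr)/\langle(1,\dots,1)\rangle$, and the surjection $\pi_1^{\mathrm{orb}}(\O_H)\twoheadrightarrow H$ must respect the normal cyclic subgroup $\langle F\rangle$ with cyclic quotient. I would feed the admissible genus-zero signatures into the parametrization of metacyclic actions furnished by Theorem~\ref{thm:main}, which records the compatible tuples $(m_1,\dots,m_s)$ alongside the data $(u,n,r,k)$, and then optimize $|H|=un$ subject to the \emph{non-split} condition.

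The main obstacle is exactly this optimization under non-splitting. For abelian, hence split, $H$ the identical analysis admits the more efficient signatures that realize the larger abelian bound $4g+4$, so the argument must demonstrate that non-splitting forbids precisely those configurations. Concretely, I expect to show that a non-split metacyclic quotient of a genus-zero orbifold group drives the $-\chi^{\mathrm{orb}}$-minimizing signature to $(0;4,4,2g)$, with the twist $k=-1$ and the relation $G^2=F^g$ of the dicyclic presentation emerging from the non-split clause of Theorem~\ref{thm:main}: any attempt to push $-\chi^{\mathrm{orb}}$ below $\tfrac12-\tfrac1{2g}$ either violates the abelianization constraint on $H_1(\O_H)$ or yields an element completing a splitting of the extension $1\to\langle F\rangle\to H\to H/\langle F\rangle\to 1$. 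Tracking equality through this chain then forces $|H|=4g$ to occur only when $H\cong\Dic_g$.
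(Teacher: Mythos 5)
Your sharpness construction is correct and is essentially the paper's own: your epimorphism from the $(0;4,4,2g)$ orbifold group onto $\Dic_g$ sending the two order-$4$ generators to $G$ and $GF^{g-1}$ and the third generator to $F$ is exactly the action the paper encodes by the data set $((2\cdot 2g,g,-1),0;[(1,4),(0,1),4],[(1,4),(1,2g),4],[(0,1),(2g-1,2g),2g])$. One caveat: $\Dic_n=\M(2,2n,n,-1)$ is non-split if and only if $n$ is even, so your $\Dic_g$-action witnesses the bound for \emph{non-split} subgroups only when $g$ is even --- the body of the paper states this parity hypothesis explicitly, and for odd $g$ your construction produces a split group $\Z_g\rtimes\Z_4$.

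The upper bound, however, has a genuine gap at its decisive step. Your reduction is fine and matches the paper: Riemann--Hurwitz shows $|H|>4g$ forces $g_0=0$ and $-\chi^{\mathrm{orb}}(\O_H)<\tfrac12$, hence $\ell=3$ or $4$. But from there you only describe a plan (``I would feed the admissible genus-zero signatures\dots'', ``I expect to show\dots'') rather than execute it, and the one concrete tool you name, the abelianization constraint, cannot finish the job: non-split metacyclic groups can have very small abelianizations (e.g.\ $H^{\mathrm{ab}}\cong\Z_4$ for $\Dic_n$ with $n$ even, and $\Z_2\times\Z_2$ for $Q_8$), and such groups do arise as quotients of $H_1$ of the dangerous genus-zero orbifolds such as $(0;2,2,2,n)$ and $(0;4,4,n)$, so the non-splitting obstruction is not visible on homology. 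The paper needs three further devices that you do not supply: first, Lemma~\ref{lem:prime}, that a metacyclic group generated by two elements one of which has prime order is split --- since for $g_0=0$, $\ell=3$ any two elliptic generators generate $H$, this kills every triangle signature with a prime cone order and narrows the candidates to the finite list $(0;2,2,3,3),(0;2,2,3,4),(0;2,2,3,5),(0;2,2,2,n),(0;4,4,n),(0;4,6,6),\dots,(0;4,6,10)$ with $n<2g$; second, the induced automorphism $\bar{G}\in\Aut(\O_{\langle\F\rangle})$ together with Remark~\ref{rem:g_bar}, which pins down $\Gamma(\O_{\langle\bar{G}\rangle})$ and hence the possible $\Gamma(\O_{\langle\F\rangle})$ in each case, with Corollary~\ref{cor:free} ruling out the free-$\F$-action possibilities; and third, Lemma~\ref{lem:ind_aut}(iii), by which $|\bar{G}|=|G|$ forces $H$ to split, disposing of the remaining $\ell=3$ signatures. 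Without carrying out this (or an equivalent) case analysis, the inequality $|H|\le 4g$ for non-split $H$ is asserted rather than proved; and your closing claim that equality occurs \emph{only} when $H\cong\Dic_g$ is stronger than the statement and likewise unsupported.
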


\noindent It may be noted here that a bound on the order of an arbitrary metacyclic subgroup of $\Mod(S_g)$ was derived in~\cite{AS}. As an immediate application of Proposition~\ref{prop1}, we obtain the following. 

\begin{cor*}
Let $H  < \Mod(S_g)$ be a finite metacyclic subgroup. 
\begin{enumerate}[(i)]
\item If $H$ is non-split, then any $F \in H$ is a reducible mapping class. 
\item If $H$ contains an irreducible mapping class $F$, then $H$ is split. Furthermore, if $H = \langle F,G\rangle$ and $\langle F \rangle \lhd H$ with $|F| = n$, then either $H \cong \Z_n \rtimes \Z_2$ or $H \cong \Z_n \rtimes \Z_3$.
\end{enumerate}
\end{cor*}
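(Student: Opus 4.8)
The plan is to deduce both parts directly from Proposition~\ref{prop1} together with the standard orbifold characterization of irreducible periodic mapping classes. First I would record the fact that a periodic mapping class $F$ of order $n$ is irreducible if and only if its quotient orbifold $\O_F := S_g/\langle F \rangle$ is a sphere with exactly three cone points (for $g \geq 2$ this is the only hyperbolic genus-zero orbifold admitting no essential simple closed curve), citing it as known. I would then observe that the two statements are contrapositive to one another as far as splitting is concerned: ``$H$ non-split $\Rightarrow$ every $F$ reducible'' is logically equivalent to ``$H$ contains an irreducible $F \Rightarrow H$ is split.'' Hence it suffices to prove (i) and then supply the extra structural (``Furthermore'') claim of (ii).

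For (i), I would suppose toward a contradiction that the non-split metacyclic subgroup $H$ contains an irreducible $F$ with $|F| = n$. Writing $n_1, n_2, n_3$ for the orders of the three cone points of $\O_F$, each $n_i$ divides $n$ (a point-stabiliser in $\langle F\rangle$ is cyclic of order dividing $n$), so $1/n_i \geq 1/n$. The Riemann--Hurwitz relation $2g - 2 = n\bigl(1 - \sum_{i=1}^{3} 1/n_i\bigr)$ then gives $2g - 2 \leq n(1 - 3/n) = n - 3$, whence $n \geq 2g + 1$. Consequently $[H : \langle F\rangle] = |H|/n \leq 4g/(2g+1) < 2$ by Proposition~\ref{prop1}, forcing $H = \langle F\rangle$ to be cyclic. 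But a cyclic group is a split metacyclic group, contradicting the non-splitness of $H$. Therefore every element of $H$ is reducible, proving (i), and the first assertion of (ii) is its contrapositive.

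It remains to pin down the isomorphism type in the ``Furthermore'' clause. I would assume $H = \langle F, G\rangle$ with $F$ irreducible, $\langle F\rangle \lhd H$, $|F| = n$, and (to exclude the trivial cyclic case) $G \notin \langle F\rangle$, so that $d := [H : \langle F\rangle] \geq 2$. By the first part of (ii), $H$ is split. The quotient $\Z_d \cong H/\langle F\rangle$ acts effectively and orientation-preservingly on the underlying sphere of $\O_F$, hence is conjugate to a group of rotations with exactly two fixed points (poles). The three cone points of $\O_F$ form a $\Z_d$-invariant set whose orbits have size $1$ (possible only at the two poles) or size $d$; the count $3 = (\text{poles used}) + d\cdot(\text{free orbits})$ leaves only $d = 2$ (one cone point at a pole, one free pair) or $d = 3$ (a single free triple). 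In either case exactly one pole $P$ is a smooth point of $\O_F$, so the fibre over $P$ in $S_g \to \O_F$ is a free $\langle F\rangle$-orbit, while its $H$-orbit maps to a cone point of order $d$ in $\O_H$; thus a point over $P$ has $H$-stabiliser $\stab_H \cong \Z_d$ meeting $\langle F\rangle$ trivially. This $\Z_d$ is a complement to $\langle F\rangle$, giving $H \cong \Z_n \rtimes \Z_d$ with $d \in \{2,3\}$, as claimed.

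The technical heart of the argument is the lower bound $n \geq 2g+1$ on the order of an irreducible periodic map, which is exactly what makes the index estimate against the bound $4g$ of Proposition~\ref{prop1} collapse; once this is in place, part (i) is genuinely immediate. The only other point requiring care is the orbit count for the rotation action in the ``Furthermore'' clause---specifically, confirming that the two pole-configurations are exhaustive and that a smooth pole always survives to furnish the complement. I expect no serious obstacle there beyond bookkeeping, since the reduction of a finite cyclic group acting on $S^2$ to a rotation group is classical.
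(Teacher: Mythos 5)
Your proof is correct. For part (i) you use exactly the paper's two ingredients---the bound $|H| \leq 4g$ from Proposition~\ref{prop:bound} and the fact that an irreducible periodic mapping class has order at least $2g+1$ (which you rederive from the sphere-with-three-cone-points characterization plus Riemann--Hurwitz, rather than citing it as the paper does)---only packaged differently: the paper observes that a non-split (hence non-cyclic) $H$ of order at most $4g$ has all element orders at most $2g$, while you run the index estimate $[H:\langle F\rangle] \leq 4g/(2g+1) < 2$ to force $H = \langle F\rangle$, a contradiction; these are interchangeable. Where you genuinely add content is the ``Furthermore'' clause: the paper asserts it without proof as a ``direct consequence'' of Corollary~\ref{cor:irr_order}, presumably with the quick argument that $\bar{G}$ is a finite-order element of $\Mod(S_{0,3}) \cong \Sigma_3$, so $|\bar{G}| \in \{2,3\}$, combined with its machinery on induced automorphisms (Lemma~\ref{lem:ind_aut}). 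Your route via the classical classification of finite orientation-preserving cyclic actions on $S^2$ as rotation groups and the orbit count $3 = \epsilon + dk$ reaches the same $d \in \{2,3\}$, and your construction of a complement as the $H$-stabilizer of a point over a smooth pole is not redundant: since the paper's notion of ``split'' only says $H \cong \Z_{n'} \rtimes \Z_{m'}$ for \emph{some} factors, identifying $H \cong \Z_n \rtimes \Z_d$ with $\Z_n = \langle F\rangle$ does require such an argument, and yours is clean (the stabilizer injects into $H/\langle F\rangle$, so it is automatically cyclic of order $d$). Two small points to tidy: the effectiveness of the $H/\langle F\rangle$-action on $\O_{\langle F \rangle}$ deserves a one-line justification (realize $H$ by isometries via Nielsen realization; an isometry inducing the identity on the quotient is a deck transformation of the associated unbranched cover, hence lies in $\langle F\rangle$), and ``exactly one pole $P$ is a smooth point'' is inaccurate for $d = 3$, where $\epsilon = 0$ makes both poles smooth---your argument needs only the existence of a smooth pole, so this is cosmetic.
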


\noindent Furthermore, taking motivation from Proposition~\ref{prop1}, we give a combinatorial classification of dicyclic subgroups of $\Mod(S_g)$ (see Proposition~\ref{prop:dic}). 

Since a non-split metacyclic group is a quotient of a split metacyclic group with a cyclic subgroup, a natural question is when would a non-split metacyclic action on $S_g$ factor through a split metacyclic action. In this connection, we provide equivalent conditions under which a non-split metacyclic action on $S_g$ would lift under a regular cyclic cover to a split metacyclic action (see Proposition~\ref{prop:split}). By applying these conditions, we obtain the following corollary. 

\begin{cor*}
\label{cor2}
The actions on $S_g$ of the metacyclic groups $\Dic_n$, $\Dic_n \times \Z_m$, and $\Dic_n \times \Z_m \times \Z_p$, where $n$ is even integer and $m,p$ are odd integers with $\gcd(p,n)=1$, factor via split metacyclic actions.  
\end{cor*}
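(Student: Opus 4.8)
The plan is to apply Proposition~\ref{prop:split} to each of the three families, reducing every case to a single degree-two cyclic cover that resolves the unique central involution of the dicyclic factor. I would begin by recording the relevant split metacyclic ``cover'' groups. For $\Dic_n$ the natural candidate is the split group $\Z_{2n}\rtimes\Z_4$, in which a generator $b$ of the $\Z_4$-factor inverts a generator $a$ of the $\Z_{2n}$-factor; one checks directly that $a^n b^2$ is a central element of order two and that the quotient $(\Z_{2n}\rtimes\Z_4)/\langle a^n b^2\rangle\cong\Dic_n$ via $a\mapsto F$, $b\mapsto G$, since in the quotient $\bar b^2=\bar a^{-n}=\bar a^n$ and $\bar b\bar a\bar b^{-1}=\bar a^{-1}$. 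This exhibits $\Dic_n$ as a central $\Z_2$-quotient of a split metacyclic group, so the associated surface cover is regular cyclic of degree two.

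For the two product families I would promote this to $\Z_{2n}\rtimes\Z_{4m}$ and $\Z_{2np}\rtimes\Z_{4m}$ respectively, letting the hypotheses do the bookkeeping. Since $m$ is odd one has $\Z_4\times\Z_m\cong\Z_{4m}$, which makes $(\Z_{2n}\rtimes\Z_4)\times\Z_m$ split metacyclic with normal cyclic subgroup $\Z_{2n}$ and complement $\Z_{4m}$; and since $p$ is odd with $\gcd(p,n)=1$ one has $\Z_{2n}\times\Z_p\cong\Z_{2np}$, which absorbs the extra factor into the normal cyclic part of $(\Z_{2n}\rtimes\Z_4)\times\Z_m\times\Z_p$. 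In each case the same central subgroup $\langle a^n b^2\rangle\cong\Z_2$ lies in the first factor, so the quotient is isomorphic to $\Dic_n\times\Z_m$ and to $\Dic_n\times\Z_m\times\Z_p$ respectively (the $\Z_m$- and $\Z_p$-factors meeting the kernel trivially). Thus all three groups are central $\Z_2$-quotients of split metacyclic groups via the very same degree-two cover.

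It then remains to verify that the conditions of Proposition~\ref{prop:split} are met for this degree-two cover, which I would split into two independent checks. First, for the dicyclic core I would use the classification of dicyclic actions (Proposition~\ref{prop:dic}) to record the orbifold signature $\O_H$ and the rotation data of $F$ at its fixed points, and feed this into the lifting criterion of Proposition~\ref{prop:split}; the resulting congruences, taken modulo the cover degree two, reduce to a single parity condition on $n$, satisfied exactly because $n$ is even. Second, for the cyclic factors, since $m$ and $p$ are odd they are coprime to the cover degree, so the contributions of $\Z_m$ and $\Z_p$ to the lifting obstruction vanish automatically, while $\gcd(p,n)=1$ keeps the normal part cyclic so that $\Z_{2np}\rtimes\Z_{4m}$ is genuinely metacyclic.

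I expect the parity computation for the dicyclic core to be the main obstacle: one must translate the abstract central extension $1\to\Z_2\to\Z_{2n}\rtimes\Z_4\to\Dic_n\to1$ into the geometric lifting data of Proposition~\ref{prop:split} and confirm that the rotation numbers of $F$ and of $G^2=F^n$ match up consistently under the degree-two cover precisely when $n$ is even. Once this single congruence is established, the two product cases follow with only the routine coprimality bookkeeping described above.
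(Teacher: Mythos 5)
Your overall route coincides with the paper's: Corollary~\ref{cor:lift_meta_groups} is presented there as an immediate consequence of Proposition~\ref{prop:split}, with the lift of a $\Dic_n$-action under the degree-two cover $p_2$ being a $\Z_{2n}\rtimes_{-1}\Z_4$-action (exactly as realized in Section~\ref{sec:hyp_str}), and your central-quotient algebra is correct: $a^nb^2$ is central of order two in $\Z_{2n}\rtimes_{-1}\Z_4$ with quotient $\Dic_n$, and the analogous statements hold for $\Z_{2n}\rtimes\Z_{4m}$ and $\Z_{2np}\rtimes\Z_{4m}$. But there is a gap in how you connect this algebra to Proposition~\ref{prop:split}. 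That proposition takes as input $H$ \emph{presented} as $\M(u,n,r,k)$, and its condition (i) pins the cover degree to $\nu=n/r$; so you must first exhibit $\Dic_n=\M(2,2n,n,-1)$, $\Dic_n\times\Z_m\cong\M(2m,2n,n,-1)$ (take $G'=Gc$ with $c$ generating $\Z_m$ and use that $m$ is odd, so $(G')^{2m}=F^{nm}=F^n$), and $\Dic_n\times\Z_m\times\Z_p\cong\M(2m,2np,np,k)$ with $k\equiv-1\pmod{2n}$ and $k\equiv1\pmod{p}$ (using $\gcd(p,2n)=1$), whence $\nu=n/r=2$ in all three cases. You identify the covering groups but never compute the amalgams, so the fact that a degree-two cover is the right one is asserted rather than derived from the proposition you are invoking.

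More seriously, the two shortcuts you propose for verifying condition (ii) of Proposition~\ref{prop:split} would not work as described. First, the hypothesis that $n$ is even is not what makes the lifting congruences solvable; it is what makes the groups non-split (for $n$ odd, $\Dic_n\cong\Z_n\rtimes\Z_4$ and the statement is trivial), so expecting the criterion to ``reduce to a single parity condition on $n$'' misreads the role of that hypothesis. Second, an action of $\Dic_n\times\Z_m$ on $S_g$ is not a product of a $\Dic_n$-action and a $\Z_m$-action: the surface-kernel epimorphism sends each $\xi_i$ to a mixed word $\G^{\gamma_i}\F^{\delta_i}$, so there is no factorwise check in which the $\Z_m$- and $\Z_p$-contributions ``vanish automatically''; likewise Proposition~\ref{prop:dic} constrains only $D_F$ for $\Dic_n=\langle F,G\rangle$ with $|G|=4$ and supplies neither the full weak conjugacy data nor anything about the product groups. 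What actually has to be shown is that for an arbitrary metacyclic data set $\D$ encoding an action of one of these groups, there exist integers $a_i$ making the shifted tuple $\tilde{\D}$ of Proposition~\ref{prop:split}(ii) a valid data set in the sense of Definition~\ref{defn:meta_cyc_dataset} --- in particular that each shifted image retains order $n_i$, equivalently that the central kernel $\langle\tilde{G}^u\tilde{F}^{-r}\rangle\cong\Z_2$ avoids every cone-point stabilizer so the deck group acts freely, and that the long relation and surjectivity conditions survive the shifts. Your proposal correctly locates this as the crux but defers it, and the substitutes offered in its place are unsound.
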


The geometric realizations of several split metacyclic actions on $S_g$ were described in~\cite{NKA}. But the realizations of non-split metacyclic group actions on $S_g$ are far more challenging as these groups are not realizable as isometry groups of $\mathbb{R}^3$. However, Corollary~\ref{cor2} (see also Proposition~\ref{prop:split}) enables us to realize the lifts of certain metacyclic actions under suitably chosen regular cyclic covers. In Section~\ref{sec:hyp_str}, we describe nontrivial geometric realizations of some finite split metacyclic actions that are lifts of non-split metacyclic actions on $S_{10}$ and $S_{11}$ under regular cyclic covers. Finally, in Section~\ref{sec:classify}, we provide classifications of the finite non-split metacyclic subgroups of $\Mod(S_{10})$ and $\Mod(S_{11})$ up to a certain equivalence that we call \textit{weak conjugacy} (see Section~\ref{sec:main}) which is central to the theory developed in this paper. 

\section{Preliminaries}
In this section, we introduce some basic notions from Thurston's orbifold theory~\cite[Chapter 13]{WT} and the theory of group actions on surfaces~\cite{SK1,M1} that are crucial to the theory we develop in this paper.
\label{sec:prelims}

\subsection{Fuchsian groups} Let $\Homeo^+(S_g)$ denote the group of orientation-preserving homeomorphisms of $S_g$, and let $H < \Homeo^+(S_g)$ be a finite group. A faithful and properly discontinuous $H$-action on $S_g$ induces a branched covering $S_g \to \mathcal{O}_H (:= S_g/H)$ with $\ell$ cone points $x_1,\ldots ,x_{\ell}$ on the quotient orbifold $\mathcal{O}_H \approx S_{g_0}$ (which we will call the \textit{corresponding orbifold}) of orders $n_1, \ldots ,n_{\ell}$, respectively. Then the orbifold fundamental group $\pi_1^{\orb}(\O_H)$ of $\O_H$ has a presentation given by
\begin{equation}
\label{eqn:orb-pres}
\left\langle \alpha_1,\beta_1,\dots,\alpha_{g_0},\beta_{g_0}, \xi_1,\dots,\xi_{\ell} \, \mid \, \xi_1^{n_1},\dots,\xi_\ell^{n_{\ell}},\,\prod_{j=1}^{\ell} \xi_j \prod_{i=1}^{g_0}[\alpha_i,\beta_i]\right\rangle.
\end{equation}
In classical parlance, $\pi_1^{\orb}(\O_H)$ is also known as a \textit{Fuchsian group}~\cite{SK1, M1} with \textit{signature}
$$\Gamma(\O_H) := (g_0;n_1,\ldots,n_{\ell}),$$ and the relation $\prod_{j=1}^{\ell} \xi_j \prod_{i=1}^{g_0}[\alpha_i,\beta_i]$ appearing in its presentation in (\ref{eqn:orb-pres}) is called the \textit{long relation}.
\noindent From Thurston's orbifold theory~\cite[Chapter 13]{WT}, we obtain exact sequence
\begin{equation}
\label{eq:surf_kern}
1 \rightarrow \pi_1(S_g) \rightarrow \pi_1^{\orb}(\O_H) \xrightarrow{\phi_H}  H \rightarrow 1,
\end{equation}
\noindent where $\phi_H$ is called the \textit{surface kernel} epimorphism. In this setting, we will require the following result due to Harvey~\cite{H1}, which gives a characterization of finite group action on surfaces.

\begin{theorem}
	\label{Harvey Condition} 
	A finite group $H$ acts faithfully on $S_g$ with $\Gamma(\O_H) = (g_0;n_1,\dots,n_{\ell})$ if and only if it satisfies the following two conditions: 
	\begin{enumerate}[(i)]
		\item $\displaystyle \frac{2g-2}{|H|}=2g_0-2+\sum_{i=1}^{\ell}\left(1-\frac{1}{n_i}\right)$, and 
		\item  there exists a surjective homomorphism $\phi_H:\pi_1^{\orb}(\O_H) \to H$ that preserves the orders of all torsion elements of $\pi_1^{\orb}(\O_H)$.
	\end{enumerate}
\end{theorem}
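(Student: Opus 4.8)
The plan is to prove the two implications separately, exploiting the dictionary between faithful $H$-actions on $S_g$ and surface kernel epimorphisms from Fuchsian groups furnished by Thurston's orbifold theory. For the necessity direction, suppose $H$ acts faithfully on $S_g$ with $\Gamma(\O_H) = (g_0;n_1,\dots,n_{\ell})$. The exact sequence (\ref{eq:surf_kern}) immediately supplies the surjection $\phi_H : \pi_1^{\orb}(\O_H) \to H$ of condition (ii). To see that $\phi_H$ preserves the orders of torsion elements, I would recall that each elliptic generator $\xi_i$ in (\ref{eqn:orb-pres}) corresponds to a loop encircling the cone point $x_i$ of order $n_i$, and $\phi_H(\xi_i)$ generates the cyclic stabilizer of a point lying above $x_i$, whose order is exactly $n_i$; since every nontrivial torsion element of $\pi_1^{\orb}(\O_H)$ is conjugate to a power of some $\xi_i$, order preservation follows. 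For condition (i), I would compare orbifold Euler characteristics along the degree-$|H|$ branched cover $S_g \to \O_H$, using $\chi(S_g) = |H|\cdot\chi^{\orb}(\O_H)$; substituting $\chi(S_g) = 2-2g$ and $\chi^{\orb}(\O_H) = 2 - 2g_0 - \sum_{i=1}^{\ell}(1 - 1/n_i)$ and dividing by $|H|$ yields the Riemann--Hurwitz relation in (i).

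For the sufficiency direction, I would run this construction in reverse. Since $g \geq 2$, condition (i) forces $2g_0 - 2 + \sum_{i=1}^{\ell}(1 - 1/n_i) > 0$, which is precisely the hyperbolicity condition guaranteeing the existence of a Fuchsian group $\Gamma < \p$ with signature $(g_0;n_1,\dots,n_{\ell})$ acting on $\h$ with $\h/\Gamma \approx \O_H$. Identifying $\Gamma$ with $\pi_1^{\orb}(\O_H)$ via the presentation (\ref{eqn:orb-pres}), I would set $K = \ker \phi_H$ for the hypothesized epimorphism of condition (ii). The crucial step is that $K$ is torsion-free: any nontrivial torsion element of $\Gamma$ is conjugate to a power $\xi_i^{j}$ with $0 < j < n_i$, and the order-preserving property of $\phi_H$ guarantees $\phi_H(\xi_i^{j}) \neq 1$, so no such element lies in $K$. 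Hence $K$ acts freely and properly discontinuously on $\h$, so $S := \h/K$ is a closed surface on which $H = \Gamma/K$ acts faithfully with quotient orbifold $\O_H$ of the prescribed signature. Finally, feeding the data back into the Euler characteristic computation from the first half, condition (i) pins the genus of $S$ to $g$, which completes the argument.

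I expect the main obstacle to lie in the sufficiency direction, and within it in two linked points: invoking the existence theorem for Fuchsian groups realizing a prescribed hyperbolic signature, and verifying that order preservation is exactly equivalent to torsion-freeness of $K$. The latter is the conceptual heart of the statement, since it converts the purely algebraic order-preserving hypothesis into the geometric fact that the induced covering $\h/K \to \h/\Gamma$ has the correct local branching behavior, forcing $\h/K$ to be an honest manifold rather than a nontrivial orbifold.
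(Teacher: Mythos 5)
The paper offers no proof of this statement---it is quoted as a known result of Harvey \cite{H1}---and your sketch reproduces the standard classical argument from that literature: in one direction, Riemann--Hurwitz via multiplicativity of the orbifold Euler characteristic under the degree-$|H|$ branched cover together with the surface kernel sequence (\ref{eq:surf_kern}) (order preservation following because torsion elements are conjugate into the $\langle \xi_i \rangle$ and the kernel $\pi_1(S_g)$ is torsion-free), and in the other, realization of the hyperbolic signature by a cocompact Fuchsian group $\Gamma$ plus the equivalence between order preservation of $\phi_H$ and torsion-freeness of $K = \ker \phi_H$, so that $\h/K$ is the desired closed surface of genus $g$ with $H \cong \Gamma/K$ acting on it. This is correct and essentially the same route as the cited source; the only points you leave tacit---that $K$ has finite index in the cocompact group $\Gamma$, so $\h/K$ is indeed closed, and that the induced $\Gamma/K$-action on $\h/K$ is faithful---are routine consequences of discreteness.
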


\subsection{Cyclic actions on surfaces } For $g \geq 1$, let $F \in \Mod(S_g)$ be of order $n$. By the Nielsen-Kerckhoff theorem~\cite{SK,JN}, $F$ is represented by a \textit{standard representative} $\F \in \Homeo^+(S_g)$ of the same order. We will refer to both $\F$ and the group it generates, interchangeably, as a \textit{$\Z_n$-action on $S_g$}.  Each cone point $x_i \in \O_{\langle \F \rangle}$ lifts to an orbit of size $n/n_i$ on $S_g$, and the local rotation induced by $\F$ around the points in each orbit is given by $2 \pi c_i^{-1}/n_i$, where $\gcd(c_i,n_i)=1$ and $c_i c_i^{-1} \equiv 1 \pmod{n_i}$. Further, by virtue of Theorem~\ref{Harvey Condition}, there exists an exact sequence:
\begin{equation*}
\label{eq:surf_kern}
1 \rightarrow \pi_1(S_g) \rightarrow \pi_1^{\orb}(\O_{\langle \F \rangle}) \xrightarrow{\phi_{\langle \F \rangle}}  \langle \F \rangle \rightarrow 1, 
\end{equation*}
\noindent where $\phi_{\langle \F \rangle} (\xi _i) = \F^{(n/n_i)c_i}$, for $1 \leq i \leq \ell$. We will now introduce a tuple of integers that encodes the conjugacy class of a $\Z_n$-action on $S_g$.  

\begin{definition}\label{defn:data_set}
	A \textit{cyclic data set of degree $n$} is a tuple
	$$
	D = (n,g_0, d; (c_1,n_1),\ldots, (c_{\ell},n_{\ell})),
	$$
	where $n\geq 2$, $g_0 \geq 0$, and $0 \leq d \leq n-1$ are integers, and each $c_i \in \Z_{n_i}^\times$ such that:
	\begin{enumerate}[(i)]
		\item $d > 0$ if and only if $\ell = 0$ and $\gcd(d,n) = 1$, whenever $d >0$,
		\item each $n_i\mid n$,
		\item $\lcm(n_1,\ldots \widehat{n_i}, \ldots,n_{\ell}) = N$, for $1 \leq i \leq \ell$, where $N = n$ if $g_0 = 0$,  and
		\item $\displaystyle \sum_{j=1}^{\ell} \frac{n}{n_j}c_j \equiv 0\pmod{n}$.
	\end{enumerate}
	The number $g$ determined by the Riemann-Hurwitz equation
	\begin{equation}\label{eqn:riemann_hurwitz}
	\frac{2-2g}{n} = 2-2g_0 + \sum_{j=1}^{\ell} \left(\frac{1}{n_j} - 1 \right) 
	\end{equation}
	is called the \textit{genus} of the data set, denoted by $g(D)$.
\end{definition}

\noindent Note that quantity $d$ (in Definition~\ref{defn:data_set}) will be non-zero if and only if $D$ represents a free rotation of $S_g$ by $2\pi d/n$, in which case, $D$ will take the form $(n,g_0,d;)$. We will not include $d$ in the notation of a data set, whenever $d = 0$.  

By the Nielsen-Kerckhoff theorem, the quotient map $\Homeo^+(S_g)\to \Mod(S_g)$ induces a  correspondence between the conjugacy classes of finite-order maps in $\Homeo^+(S_g)$ and the conjugacy classes of finite-order mapping classes in $\Mod(S_g)$. This leads us to the following lemma primarily due to Nielsen~\cite{JN1} which allows us to use data sets to describe the conjugacy classes of cyclic actions on $S_g$ (see also~\cite[Theorem 3.8]{KP} and~\cite{H1}). 

\begin{lemma}\label{prop:ds-action}
	For $g \geq 1$ and $n \geq 2$, data sets of degree $n$ and genus $g$ correspond to conjugacy classes of $\Z_n$-actions on $S_g$. 
\end{lemma}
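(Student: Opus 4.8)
The plan is to exhibit mutually inverse correspondences between the set of degree-$n$, genus-$g$ data sets of Definition~\ref{defn:data_set} and the set of conjugacy classes of $\Z_n$-actions on $S_g$, using Harvey's theorem (Theorem~\ref{Harvey Condition}) together with the Nielsen realization of the associated surface kernel epimorphisms. Since the Nielsen-Kerckhoff correspondence already identifies conjugacy classes of finite-order maps in $\Homeo^+(S_g)$ with those in $\Mod(S_g)$, it suffices to work at the level of $\Homeo^+(S_g)$-actions. The construction splits into a forward map (action $\mapsto$ data set), a backward map (data set $\mapsto$ action), and a verification that each is well defined and that the two are inverse to one another; this last step, which amounts to showing that the data set is a complete conjugacy invariant, is where the real content lies.

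\emph{Forward direction.} Given a $\Z_n$-action $\langle \F \rangle$ on $S_g$, I would form the quotient orbifold $\O_{\langle \F \rangle}$ with signature $(g_0;n_1,\dots,n_\ell)$ and read off, at each cone point $x_i$, the local rotation number, recording it as the residue $c_i \in \Z_{n_i}^\times$ determined by $\phi_{\langle\F\rangle}(\xi_i) = \F^{(n/n_i)c_i}$. I then need to check that the tuple $(n,g_0,d;(c_1,n_1),\dots,(c_\ell,n_\ell))$ satisfies conditions (i)--(iv) of Definition~\ref{defn:data_set}. Condition (ii) is immediate since each point stabilizer is a subgroup of the cyclic group $\Z_n$; condition (iv) is forced by applying $\phi_{\langle\F\rangle}$ to the long relation and using that $\Z_n$ is abelian, so the commutator terms die and $\sum_j (n/n_j)c_j \equiv 0 \pmod n$; condition (iii) is exactly the surjectivity of $\phi_{\langle\F\rangle}$ (when $g_0 = 0$ the cone-point images alone must generate $\Z_n$, forcing the common $\lcm$ to equal $n$); and condition (i) records that a fixed-point-free action is a rotation by $2\pi d/n$ with $\gcd(d,n)=1$.

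\emph{Backward direction.} Starting from a data set, I would set up the Fuchsian group with the prescribed signature and define a homomorphism $\phi : \pi_1^{\orb}(\O) \to \Z_n$ by $\xi_i \mapsto (n/n_i)c_i$, choosing the images of the handle generators $\alpha_i,\beta_i$ so as to guarantee surjectivity. I would then verify the two clauses of Harvey's theorem: clause (i) is precisely the Riemann-Hurwitz equation that defines $g(D)$, and clause (ii) requires that $\phi$ be surjective and order-preserving on torsion. Order preservation holds because $\gcd(c_i,n_i)=1$ makes $(n/n_i)c_i$ an element of order exactly $n_i$ in $\Z_n$; surjectivity follows from condition (iii) together with the freedom to assign $\phi(\alpha_1)$ a generator when $g_0 > 0$. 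Harvey's theorem and the Nielsen realization theorem then produce a $\Z_n$-action realizing the data.

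\emph{Bijectivity and the main obstacle.} The composite action $\mapsto$ data set $\mapsto$ action returns the same signature and rotation data, so the forward map is injective on its image; the genuinely delicate point is that conjugate actions yield the same data set and, conversely, that equality of data sets forces conjugacy. For well-definedness one checks that the rotation numbers $c_i$ and the signature are conjugacy invariants, while the normalizations built into Definition~\ref{defn:data_set} absorb the ambiguity in ordering the cone points and in choosing a generating set of $\pi_1^{\orb}(\O)$. The converse --- that the orbit data determines the action up to conjugacy --- is the substantive classical input, due to Nielsen~\cite{JN1}: any two surface kernel epimorphisms with matching signature and rotation data differ by an automorphism of $\pi_1^{\orb}(\O)$ induced by a homeomorphism, so the associated actions are conjugate in $\Homeo^+(S_g)$. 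I expect this last equivalence, rather than the bookkeeping in conditions (i)--(iv), to be the crux of the argument.
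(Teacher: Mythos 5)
The paper never proves this lemma: it is stated as a classical fact, attributed to Nielsen~\cite{JN1} with pointers to \cite[Theorem 3.8]{KP} and Harvey~\cite{H1}. So there is no in-paper argument to deviate from, and your reconstruction follows exactly the route those sources take --- extract the signature and rotation numbers of the action, realize a given data set through a surface-kernel epimorphism via Theorem~\ref{Harvey Condition}, and reduce the completeness of the invariant to Nielsen's rigidity statement that two surface-kernel epimorphisms with matching rotation data differ by a homeomorphism-induced automorphism of $\pi_1^{\orb}$. You correctly locate the crux in that last step. One small misstatement: condition (iii) of Definition~\ref{defn:data_set} is \emph{not} ``exactly the surjectivity of $\phi_{\langle\F\rangle}$''. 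Surjectivity accounts only for the clause $N=n$ when $g_0=0$. The deleted-lcm clause --- that $\lcm(n_1,\ldots,\widehat{n_i},\ldots,n_\ell)$ is the same for every $i$ --- is instead forced by condition (iv): for a prime $p$, if the maximal $p$-valuation among the $n_j$ were attained at a single index $j^*$, then $(n/n_{j^*})c_{j^*}$ would have strictly smaller $p$-valuation than every other summand (here $\gcd(c_{j^*},n_{j^*})=1$ matters), so $\sum_j (n/n_j)c_j$ could not vanish modulo $n$. Dually, in your backward direction you should note explicitly that $\xi_i\mapsto (n/n_i)c_i$ respects the long relation, which is again condition (iv). Also, realizing the action from the epimorphism is covering-space theory built into Theorem~\ref{Harvey Condition}; Nielsen--Kerckhoff is needed only to pass between $\Homeo^+(S_g)$ and $\Mod(S_g)$, not for the realization itself.

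The one substantive hole is the free case $\ell=0$, where the data set still carries the parameter $d$. Your stated crux --- rigidity of surface-kernel epimorphisms with matching signature and rotation data --- is mute here: for a free action there are no cone points, so ``matching rotation data'' is vacuous, and homeomorphism-induced automorphisms act transitively on epimorphisms $\pi_1(S_{g_0})\to\Z_n$ (symplectic transitivity on unimodular vectors mod $n$). Hence the epimorphism picture alone cannot separate the data sets $(n,g_0,d;)$ for different $d$, and a complete proof must say which conjugacy invariant of a free action $d$ encodes, why it is well defined, and why together with $(n,g_0)$ it is complete; your sketch only remarks that condition (i) ``records'' the rotation angle, which is a restatement of the convention, not an argument. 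Everything else in your proposal is fillable bookkeeping, but the free case genuinely needs its own treatment (as it receives in the cited literature), and as written your argument does not cover it.
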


\noindent We will denote the data set corresponding to the conjugacy class of a periodic mapping class $F$ by $D_F$. For compactness of notation, we also write a data set $D$ (as in Definition~\ref{defn:data_set}) as
$$D = (n,g_0,d; ((d_1,m_1),\alpha_1),\ldots,((d_{\ell'},m_{\ell'}),\alpha_{\ell'})),$$
where $(d_i,m_i)$ are the distinct pairs in the multiset $S = \{(c_1,n_1),\ldots,(c_{\ell},n_{\ell})\}$, and the $\alpha_i$ denote the multiplicity of the pair $(d_i,m_i)$ in the multiset $S$. Further, we note that every cone point $[x] \in \mathcal{O}_{\langle \F \rangle}$ corresponds to a unique pair in the multiset $S$ appearing in $D_F$, which we denote by $\P_x := (c_{x},n_{x})$. 

Given $u \in \Z_m^{\times}$ and $\G \in H < \Homeo^+(S_g)$ be of order $m$, let $\mathbb{F}_{\G} (u,m)$ denote the set of fixed points of $\G$ with induced rotation angle $2\pi u/m$. Let $C_H(\G)$ be the centralizer of a $\G \in H$ and $\sim$ denote the conjugation relation between any two elements in $H$. We conclude this subsection by stating the following result from the theory of Riemann surfaces~\cite{TB}, which we will use in the proof of our main theorem.

\begin{lemma}
	\label{lem:riem_surf_fix}
	Let $H < \Homeo^+(S_g)$ with $\Gamma(\O_H)  = (g_0; n_1,\ldots,n_{\ell})$, and let $\G \in H$ be of order $m$. Then for $u \in \Z_m ^{\times}$, we have
	\[\displaystyle |\mathbb{F}_{\G} (u,m)| = |C_H(\G)| \cdot \sum_{\displaystyle \substack{1 \leq i \leq \ell \\ m \mid n_i \\ \G \sim \phi_H(\xi_i)^{n_i u/m}}} \frac{1}{n_i}.\]
\end{lemma}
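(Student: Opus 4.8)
The plan is to count the fixed points orbit-by-orbit over the cone points of $\O_H$, reducing everything to a coset-counting problem governed by the centralizer $C_H(\G)$.

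First I would observe that since the $H$-action on $S_g$ is free away from the fibers over the cone points $x_1,\dots,x_\ell$, every point fixed by the nontrivial element $\G$ must lie in $\pi^{-1}(x_i)$ for some $i$, where $\pi\colon S_g\to\O_H$ is the branched cover. Hence it suffices to count, for each $i$, the points of the single $H$-orbit $\pi^{-1}(x_i)$ that are fixed by $\G$ with induced rotation angle $2\pi u/m$, and then sum over $i$. Fixing a base point $p_0\in\pi^{-1}(x_i)$ whose stabilizer is the cyclic group $\langle\phi_H(\xi_i)\rangle$ of order $n_i$, I would identify the orbit with the coset space $H/\stab(p_0)$ via $g\,\stab(p_0)\mapsto g\cdot p_0$, so that $\stab(g\cdot p_0)=g\langle\phi_H(\xi_i)\rangle g^{-1}$.

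Next I would translate the fixed-point condition into group theory. The element $\G$ fixes $g\cdot p_0$ precisely when $g^{-1}\G g$ lies in $\langle\phi_H(\xi_i)\rangle$, which forces $m\mid n_i$; moreover, since conjugation by the homeomorphism $g$ carries the local action of $\G$ at $g\cdot p_0$ to the local action of $g^{-1}\G g$ at $p_0$, the rotation angle of $\G$ at $g\cdot p_0$ equals that of $g^{-1}\G g$ at $p_0$. Using the normalization that $\phi_H(\xi_i)$ acts at $p_0$ with the standard rotation, the unique element of $\langle\phi_H(\xi_i)\rangle$ of order $m$ with rotation angle $2\pi u/m$ is $w_i:=\phi_H(\xi_i)^{n_i u/m}$, where $\gcd(u,m)=1$ guarantees $w_i$ has order exactly $m$. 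Thus $g\cdot p_0\in\mathbb{F}_{\G}(u,m)$ if and only if $g^{-1}\G g=w_i$.

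Then comes the counting step. I would first check that the condition $g^{-1}\G g=w_i$ is well defined on cosets $g\,\stab(p_0)$: replacing $g$ by $gh$ with $h\in\langle\phi_H(\xi_i)\rangle$ conjugates $w_i$ by $h$, which fixes $w_i$ because the cyclic group $\langle\phi_H(\xi_i)\rangle$ is abelian; consequently the solution set is a union of full $\stab(p_0)$-cosets. If the set $\{g\in H: g^{-1}\G g=w_i\}$ is nonempty, I would fix $g_*$ in it; a direct computation shows $g^{-1}\G g=g_*^{-1}\G g_*$ if and only if $g g_*^{-1}\in C_H(\G)$, so the set is the right coset $C_H(\G)g_*$ of cardinality $|C_H(\G)|$. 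Dividing by the coset size $n_i$, the number of points of $\pi^{-1}(x_i)$ lying in $\mathbb{F}_{\G}(u,m)$ equals $|C_H(\G)|/n_i$ when $m\mid n_i$ and $\G\sim w_i$, and is $0$ otherwise, the nonemptiness being exactly the condition $\G\sim\phi_H(\xi_i)^{n_iu/m}$. Summing over $i$ yields the stated formula. The main obstacle I anticipate is the rotation-angle bookkeeping in the third step: pinning down the precise exponent $n_i u/m$ requires fixing a consistent convention for the rotation number of the canonical generator $\phi_H(\xi_i)$ of each stabilizer and verifying that conjugation by $g$ transports local rotation data faithfully from $p_0$ to $g\cdot p_0$. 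Once the identification of ``rotation angle $2\pi u/m$'' with the single group element $w_i=\phi_H(\xi_i)^{n_iu/m}$ is secured, the remaining orbit- and coset-counting is routine.
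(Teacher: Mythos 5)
Your proof is correct, but note that the paper itself offers no proof of this lemma: it is stated as a known result from the theory of Riemann surfaces, citing~\cite{TB}. Your argument is essentially the standard one found in that literature (it goes back to Macbeath's fixed-point count): restrict to the fiber $\pi^{-1}(x_i)$, identify it with $H/\stab(p_0)$, translate ``$\G$ fixes $g\cdot p_0$ with rotation angle $2\pi u/m$'' into the equation $g^{-1}\G g = w_i$ with $w_i = \phi_H(\xi_i)^{n_i u/m}$, and observe that the solution set is either empty or a single right coset $C_H(\G)g_*$, which is a disjoint union of $|C_H(\G)|/n_i$ full $\stab(p_0)$-cosets since $\langle\phi_H(\xi_i)\rangle$ is abelian and contains $w_i$. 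All of these steps are sound, including the nonemptiness criterion being exactly the conjugacy condition $\G \sim \phi_H(\xi_i)^{n_i u/m}$ and the check that $\gcd(u,m)=1$ gives $w_i$ order exactly $m$. The one input you correctly flag but should make explicit is the normalization: one needs a base point $p_0 \in \pi^{-1}(x_i)$ with $\stab_H(p_0) = \langle\phi_H(\xi_i)\rangle$ at which $\phi_H(\xi_i)$ acts as rotation by exactly $2\pi/n_i$ (obtained by lifting the boundary loop $\xi_i$ of a small disk about $x_i$), together with the fact that conjugation by an orientation-preserving homeomorphism transports local rotation angles; this matches the paper's own convention in Section~2.2, where the stabilizer generator rotates by $2\pi c_i^{-1}/n_i$ and $\phi(\xi_i)$ is its $c_i$-th power, hence rotates by $2\pi c_i c_i^{-1}/n_i = 2\pi/n_i$. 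Granting that standard fact, the uniqueness of the element of the cyclic stabilizer with rotation angle $2\pi u/m$ pins down $w_i$, and the remaining orbit and coset counting is airtight.
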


\subsection{Metacyclic actions on surfaces} Given integers $u,n \geq 2$, $r \mid n$ and $k \in \Z_n^{\times}$ such that $k^u \equiv 1 \pmod{n}$, a \textit{finite metacyclic action of order $un$} (written as $u \cdot n$) on $S_g$ is a tuple $(H, (\G,\F))$, where 
$H < \Homeo^+(S_g)$, and $$H = \langle \F,\G \, \mid \, \F^r = \G^u, \F^n = 1, \G^{-1}\F\G = \F^k \rangle \cong \M(u,n,r,k).$$ Metacyclic groups have been completely classified in~\cite{CEH}. We will call the multiplicative class $k$ the \textit{twist factor} and $r$ be the \textit{amalgam} of the metacyclic action $(H, (\G,\F))$. In the presentation above, if we further assume that $r=n$, then $\M(u,n,r,k) \cong \Z_n \rtimes_{k} \Z_u$, which is also known as a \textit{split metacyclic group}. Since split metacyclic actions on surfaces have been analyzed in~\cite{NKA}, we will focus our attention on the finite non-split metacyclic subgroups of $\Mod(S_g)$. 

Since $\langle \F \rangle \lhd H$, $\G$ would induce a $\bar{\G} \in \Homeo^+(\O_{\langle \F \rangle})$ (see~\cite{TWT}) that restricts to an order-preserving bijection on the set of cone points in $\O_{\langle \F \rangle}$. We will call $\bar{\G}$, \textit{the induced automorphism on $\O_{\langle \F \rangle}$ by $\G$,} and we formalize this notion in the following definition. 

\begin{definition}
	\label{defn:ind_auto}
	Let $H < \Homeo^+(S_g)$  be a finite cyclic group with $|H| = n$. We say a $\bar{\G} \in \Homeo^+(\O_H)$  is an \textit{automorphism of $\O_H$} if for $[x],[y] \in \O_H$, $k \in \Z_n^{\times}$ and $\bar{\G}([x]) = [y]$, we have: 
	\begin{enumerate}[(i)]
		\item $n_x = n_y$, and 
		\item $ c_x =k c_y$. 
	\end{enumerate} 
	We denote the group of automorphisms of $\O_H$ by $\Aut(\O_H)$.
\end{definition}

\noindent We will require the following technical lemma that gives some basic properties of the induced automorphism. 

\begin{lemma}
	\label{lem:ind_aut}
	Let $\G,\F \in {\Homeo}^+(S_g)$ be maps of orders $m,n$, respectively, such that $\G^{-1} \F \G = \F^k$, and let $H = \langle \F\rangle$. Then:
	\begin{enumerate}[(i)]
		\item  $\G$ induces a $\bar{\G}\in \Aut(\O_H)$ such that $$\O_H/\langle \bar{\G}\rangle = S_g/\langle \F,\G\rangle,$$ 
		\item $|\bar{\G}| \text{ divides } |\G|$, and 
		\item $|\bar{\G}| < m$ if and only if $\F^{r}=\G^u$, for some $0< r<n$ and $0< u< m$.
	\end{enumerate}
\end{lemma}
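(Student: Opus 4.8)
The plan is to combine an elementary group-theoretic computation with a local analysis of rotation numbers on $\O_H$. The structural fact underpinning everything is that the relation $\G^{-1}\F\G=\F^k$ with $k\in\Z_n^{\times}$ forces $\G$ to normalize $H=\langle\F\rangle$: conjugation by $\G$ restricts to the automorphism $\F\mapsto\F^{k}$ of $H$, so $H\lhd\langle\F,\G\rangle$. Hence $\G$ descends to a homeomorphism $\bar\G$ of $\O_H=S_g/\langle\F\rangle$, and the image of $\G$ generates the cyclic quotient $\langle\F,\G\rangle/\langle\F\rangle$, which acts on $\O_H$ as $\langle\bar\G\rangle$. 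Quotienting in stages then gives
\[
\O_H/\langle\bar\G\rangle=\bigl(S_g/\langle\F\rangle\bigr)\big/\bigl(\langle\F,\G\rangle/\langle\F\rangle\bigr)=S_g/\langle\F,\G\rangle,
\]
which is the asserted equality in (i).

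To complete (i), I would check the two conditions of Definition~\ref{defn:ind_auto}. Let $\tilde x\in S_g$ lie over a cone point $[x]$ with $\P_x=(c_x,n_x)$, so that $\stab_H(\tilde x)=\langle\F^{n/n_x}\rangle$ has order $n_x$ and the local rotation of $\F^{n/n_x}$ at $\tilde x$ is $2\pi c_x^{-1}/n_x$. Setting $[y]=\bar\G([x])$ with representative $\tilde y=\G\tilde x$, one has $\stab_H(\tilde y)=\G\,\stab_H(\tilde x)\,\G^{-1}$; since conjugation by $\G$ is an automorphism of $H$ carrying $\F$ to a power coprime to $n$, it fixes the unique order-$n_x$ subgroup $\langle\F^{n/n_x}\rangle$, giving $n_y=n_x$. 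For the angle, the local rotation of $\F^{n/n_x}$ at $\tilde y=\G\tilde x$ equals that of $\G^{-1}\F^{n/n_x}\G=\F^{(n/n_x)k}$ at $\tilde x$, because conjugation by the orientation-preserving map $\G$ preserves rotation numbers; this multiplies the angle by $k$. Comparing $2\pi(kc_x^{-1})/n_x$ with the defining value $2\pi c_y^{-1}/n_x$ yields $c_y^{-1}\equiv kc_x^{-1}$, i.e. $c_x\equiv kc_y\pmod{n_x}$, which is the second condition. This rotation-number bookkeeping is the main obstacle: one must pin down whether conjugation multiplies the angle by $k$ or by $k^{-1}$ and keep track of the inverse in the defining formula $\phi_{\langle\F\rangle}(\xi_i)=\F^{(n/n_i)c_i}$, so I would fix all orientation and inverse conventions against that formula before computing.

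Parts (ii) and (iii) I would settle purely group-theoretically. The assignment $\G\mapsto\bar\G$ is the restriction to $\langle\G\rangle$ of the quotient map $\langle\F,\G\rangle\to\langle\F,\G\rangle/\langle\F\rangle\cong\langle\bar\G\rangle$, whose kernel is $\langle\F\rangle\cap\langle\G\rangle$. Thus $\G^{m}=1$ gives $\bar\G^{m}=1$, so $|\bar\G|\mid m$, proving (ii); more precisely,
\[
|\bar\G|=\frac{m}{|\langle\F\rangle\cap\langle\G\rangle|}.
\]
Consequently $|\bar\G|<m$ if and only if $\langle\F\rangle\cap\langle\G\rangle\neq\{1\}$. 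Any nonidentity element of this intersection can be written simultaneously as $\G^{u}$ with $0<u<m$ and as $\F^{r}$ with $0<r<n$, which gives the relation $\F^{r}=\G^{u}$; conversely, any such relation with $0<r<n$ and $0<u<m$ exhibits a nonidentity element of $\langle\F\rangle\cap\langle\G\rangle$. This proves (iii).
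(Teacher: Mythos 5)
Your argument is sound, and note that the paper offers no proof of Lemma~\ref{lem:ind_aut} at all --- it is stated as a technical lemma, with only a citation of~\cite{TWT} for the existence of the induced map --- so your write-up supplies an omitted argument rather than varying a given one. Part (i) is exactly right, including the delicate conventions: with $\phi_{\langle\F\rangle}(\xi_i)=\F^{(n/n_i)c_i}$, the generator $\F^{n/n_x}$ of $\stab_H(\tilde x)$ rotates locally by $2\pi c_x^{-1}/n_x$; conjugation by the orientation-preserving $\G$ preserves local rotation numbers and carries $\F^{n/n_x}$ at $\tilde y=\G\tilde x$ to $\F^{(n/n_x)k}$ at $\tilde x$, giving $c_y^{-1}\equiv kc_x^{-1}$, i.e.\ $c_x\equiv kc_y\pmod{n_x}$, which is condition (ii) of Definition~\ref{defn:ind_auto}, while uniqueness of the order-$n_x$ subgroup of the cyclic group $H$ gives $n_x=n_y$. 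The one place you assert rather than prove is the isomorphism $\langle\F,\G\rangle/\langle\F\rangle\cong\langle\bar{\G}\rangle$, equivalently that $\bar{\G}^{j}=\mathrm{id}$ forces $\G^{j}\in\langle\F\rangle$. This is the entire content of the forward direction of (iii): the trivial inclusion $\langle\F\rangle\cap\langle\G\rangle\subseteq\ker(\G\mapsto\bar{\G})$ only yields that $|\bar{\G}|$ divides $m/|\langle\F\rangle\cap\langle\G\rangle|$, which suffices for (ii) but not for the claimed equality, and without the equality the implication ``$|\bar{\G}|<m$ $\Rightarrow$ $\F^r=\G^u$'' does not follow.

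The missing step is standard and short, but it should be recorded. Away from the finitely many points with nontrivial $\langle\F\rangle$-stabilizer, $S_g\to\O_H$ is an honest regular covering with deck group exactly $\langle\F\rangle$; if $\bar{\G}^{j}=\mathrm{id}$, then $\G^{j}$ preserves every fiber, hence restricts to a deck transformation of this connected unbranched cover, hence agrees with some $\F^{i}$ on a dense connected subsurface and therefore on all of $S_g$. (Equivalently: $\G^{j}x\in\langle\F\rangle x$ for all $x$ expresses $S_g$ as the union of the fixed-point sets of the finitely many maps $\F^{-i}\G^{j}$, $0\le i<n$, each of which is finite unless the map is the identity, so some $\F^{-i}\G^{j}=\mathrm{id}$.) With this inserted, your kernel computation and both directions of (iii) --- including the observation that $\G^{u}\neq 1$ for $0<u<m$ rules out $r=0$ --- go through verbatim.
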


\section{Main theorem}
\label{sec:main}
In this section, we establish the main result of the paper by deriving equivalent conditions under which torsion elements $F,G \in \Mod(S_g)$ can have conjugates $F',G' \in \Mod(S_g)$ such that $\langle F',G' \rangle \cong \M(u,|F'|,r,k)$. We begin by introducing a weaker notion of conjugacy that arises very naturally in this context. 
\begin{defn}
	\label{defn:weak_conj}
	Two finite metacyclic actions $(H_1, (\G_1, \F_1))$ and $(H_2, (\G_2, \F_2))$ of order $u \cdot n$, amalgam $r$ and twist factor $k$ are said to be \textit{weakly conjugate} if there exists an isomorphism, $\psi: \pi_1^{\orb}(\O_{H_1}) \cong \pi_1^{\orb}(\O_{H_2})$ and an isomorphism $\chi : H_1 \to H_2$ such that the following conditions hold. 
	\begin{enumerate}[(i)]
		\item $\chi((\G_1,\F_1)) = (\G_2,\F_2)$.
		\item For $i = 1,2$, let $\phi_{H_i}: \pi_1^{\orb}(\O_{H_i}) \to H_i$ be surface kernel epimorphisms. Then $(\chi \circ \phi_{H_1})(g) = (\phi_{H_2} \circ \psi)(g), \text{ whenever } g \in \pi_1^{\orb}(\O_{H_1})$ is of finite order.
		\item The pair $(\G_1, \F_1)$ is conjugate (component-wise) to the pair $(\G_2, \F_2)$ in $\Homeo^+(S_g).$
	\end{enumerate}
	\noindent The notion of weak conjugacy defines an equivalence relation on metacyclic actions on $S_g$ and the equivalence classes thus obtained will be called \textit{weak conjugacy classes}. 
\end{defn}

\begin{rem}
	\label{rem:ext_weak_conj}
In view of the Nielsen-Kerckhoff theorem, the notion of weak conjugacy (from Definition~\ref{defn:weak_conj}) naturally extends to an analogous notion in $\Mod(S_g)$ via the natural association 
	$$(\langle \F, \G \rangle,(\G,\F)) \leftrightarrow  (\langle F, G \rangle,(G,F)).$$ 
\end{rem}

\noindent For brevity, we will introduce the following notation. 
\begin{defn}
	Let $F,G \in \Mod(S_g)$ be of finite order with $|F|= n$. Then for some $k \in \Z_{n}^\times \setminus \{1 \}$, we say (in symbols) that $\lb F,G \rb_{u,r,k} = 1$ if there exists conjugates $F',G'$ (of $F,G$ resp.) such that $\langle F',G' \rangle \cong \M(u,n,r,k)$.
\end{defn}

We will now define an abstract tuple of integers that will encode the weak conjugacy class of a finite metacyclic actions on $S_g$.
\begin{defn}
\label{defn:meta_cyc_dataset}
	A \textit{metacyclic data set of degree $u \cdot n$, twist factor $k$, amalgam $r$ and genus $g \geq 2$} is a tuple
$$\D = ((u \cdot n,r,k),g_0;[(c_{11},n_{11}),(c_{12},n_{12}),n_1], \ldots , [(c_{\ell 1},n_{\ell 1}),(c_{\ell 2},n_{\ell 2}),n_{\ell} ]),$$
	where $u,n \geq 2$, the $n_{ij}$ are positive integers for $1 \leq i \leq \ell, ~ 1 \leq j \leq 2$, the $c_{ij} \in \Z_{n_{ij}}$, $r \mid n$ and $k \in \Z_n^{\times}$ such that $k^u \equiv 1 \pmod{n}$ and there exists a $w \in \Z$, satisfying the following conditions.
	
		\begin{enumerate}[(i)]
		\item $\displaystyle	\frac{2g-2}{un} = 2g_0 - 2 + \sum_{i=1}^{\ell} \left( 1 - \frac{1}{n_i} \right) .$
		\item \begin{enumerate}
			\item For each $i,j$, $n_{i1} \mid  \dfrac{un}{r} := m$, $n_{i2} \mid n$, either $\text{gcd}(c_{ij},n_{ij}) = 1$ or $c_{ij}=0$, and $c_{ij}=0$ if and only if $n_{ij} = 1$.
			\item For each $i$, $n_i = s_i$, where $s_i$ is the least positive integer satisfying the following  conditions for some $t_i \in \mathbb{N}$: 
			\begin{enumerate}
			\item $c_{i1}\frac{m}{n_{i1}} s_i \equiv t_i u \pmod{m}.$
			\item $c_{i2}\frac{n}{n_{i2}} (k^{c_{i1}\frac{m}{n_{i1}}(s_i -1)}+ \dots + k^{c_{i1}\frac{m}{n_{i1}}}+1) \equiv -t_i r \pmod{n}.$
			\end{enumerate}

		\end{enumerate}
	\item $\displaystyle \sum_{i = 1}^{\ell} c_{i1} \frac{m}{n_{i1}} \equiv wu \pmod{m}$. 
	\item Defining  $\displaystyle A := \sum_{i=1}^{\ell} c_{i2} \frac{n}{n_{i2}} \prod_{s = i+1}^{\ell} k^{c_{s1} \frac{m}{n_{s1}}}$ and $d:= \gcd(n,k-1)$, we have
	$$A \equiv \begin{cases}
	-wr \pmod{n}, & \text{if } g_0=0, \text{ and} \\
	d\theta -wr \pmod{n}, \text{ for } \theta \in \Z_n, & \text{if } g_0 \geq 1.
	\end{cases}$$
	\item If  $g_0 = 0$, there exists $(p_1, \ldots , p_{\ell v}), (q_1, \ldots, q_{\ell v}) \in \Z^{\ell v}$, $v \in \mathbb{N}$, and $a,b \in \Z$ such that the following conditions hold. 
	\begin{enumerate}
	\item $ \displaystyle \sum_{i'=1}^{\ell v} p_{i'} c_{i1} \frac{m}{n_{i1}} \equiv 1+au \pmod{m}$ and $$\sum_{i'=1}^{\ell v} c_{i2} \frac{n}{n_{i2}} \left(\sum_{s=1}^{p_{i'}} k^{c_{i1} \frac{m}{n_{i1}}(p_{i'} -s)}\right) \left(\prod_{t' = i'+1}^{\ell v} k^{p_{t'}c_{t1} \frac{m}{n_{t1}}}\right) \equiv -ar \pmod{n}.$$
	\item $\displaystyle \sum_{i'=1}^{\ell v} q_{i'} c_{i1} \frac{m}{n_{i1}} \equiv bu \pmod{m}$ and 
	$$\sum_{i'=1}^{\ell v} c_{i2} \frac{n}{n_{i2}} \left(\sum_{s=1}^{q_{i'}} k^{c_{i1} \frac{m}{n_{i1}}(q_{i'} -s)}\right) \left(\prod_{t' = i'+1}^{\ell v} k^{q_{t'}c_{t1} \frac{m}{n_{t1}}}\right) \equiv 1 -br \pmod{n}, \text{ where } $$ 
$$i \equiv
	\begin{cases} 
	i' \pmod{\ell}, & \text{if } i' \not \equiv 0 \pmod{\ell}, \\
	\ell & \text{otherwise,}\\
	 \end{cases} 
\,\,t \equiv
	\begin{cases} 
	t' \pmod{\ell}, & \text{if } t'  \not \equiv 0 \pmod{\ell}, \text{ and}\\
	\ell, & \text{otherwise.}\\
	 \end{cases}$$
	\end{enumerate}
	\item If  $g_0 = 1$, there exists $(p_1, \ldots , p_{\ell v}), (q_1, \ldots, q_{\ell v}) \in \Z^{\ell v}$, $m',n', a, b \in \Z$, and $v \in \mathbb{N}$ such that $m' \mid m$ and $n' \mid n$, satisfying the following conditions.
	\begin{enumerate}
	\item $ \displaystyle \sum_{i'=1}^{\ell v} p_{i'} c_{i1} \frac{m}{n_{i1}} \equiv m' + au \pmod{m}$ and $$\sum_{i'=1}^{\ell v} c_{i2} \frac{n}{n_{i2}} \left(\sum_{s=1}^{p_{i'}} k^{c_{i1} \frac{m}{n_{i1}}(p_{i'} -s)}\right) \left(\prod_{t' = i'+1}^{\ell v} k^{p_{t'}c_{t1} \frac{m}{n_{t1}}}\right) \equiv -ar \pmod{n}.$$
	\item $\displaystyle \sum_{i'=1}^{\ell v} q_{i'} c_{i1} \frac{m}{n_{i1}} \equiv bu \pmod{m} \text{ and }$ 
	$$\sum_{i'=1}^{\ell v} c_{i2} \frac{n}{n_{i2}} \left(\sum_{s=1}^{q_{i'}} k^{c_{i1} \frac{m}{n_{i1}}(q_{i'} -s)}\right) \left(\prod_{t' = i'+1}^{\ell v} k^{q_{t'}c_{t1} \frac{m}{n_{t1}}}\right) \equiv n' -br \pmod{n},$$ 
	where $$i \equiv 
	\begin{cases} 
	i' \pmod{\ell}, & \text{if } i'  \not \equiv 0 \pmod{\ell}, \\
	\ell, & \text{otherwise,}
	 \end{cases}
	\,\, t \equiv \begin{cases} 
	t' \pmod{\ell} & \text{if } t'  \not \equiv 0 \pmod{\ell}, \text{ and}\\
	\ell & \text{otherwise.}
	 \end{cases}$$
	\item $A \equiv -\beta k^{\alpha} + \beta -wr \pmod{n}$ for some non-negative integers $\alpha,$ $\beta$, where $$\displaystyle \lcm \left( \frac{m}{m'},\frac{m}{\gcd(m,\alpha)} \right) = m \text{ and } \lcm \left( \frac{n}{n'},\frac{n}{\gcd(n,\beta)} \right) = n.$$ Furthermore, we set $\alpha = 1$, when $m'=0$, and $\beta =1$, when $n'=0$.
	
	\end{enumerate}
	\end{enumerate}	 		
\end{defn}

\begin{prop}
\label{prop:main}
For integers $n,u,g,r \geq 2$ such that $r \mid n$ and $k \in \Z_n^{\times}$, the metacyclic data sets of degree $u \cdot n$ with twist factor $k$, amalgam $r$, and genus $g$ correspond to the weak conjugacy classes of $\M(u,n,r,k)$-actions on $S_g$.
\end{prop}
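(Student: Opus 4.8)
The plan is to set up the two-stage branched covering tower $S_g \to \O_{\langle \F\rangle} \to \O_H$ provided by Lemma~\ref{lem:ind_aut} and to read a metacyclic data set off the surface kernel epimorphism $\phi_H \colon \pi_1^{\orb}(\O_H) \to H$, where $H \cong \M(u,n,r,k)$ and $\O_H = S_g/H$ has signature $(g_0;n_1,\dots,n_\ell)$. Since $\langle\F\rangle \lhd H$ with $H/\langle\F\rangle \cong \langle\bar{\G}\rangle$ of order $u$, every element of $H$ has a normal form $\G^a\F^b$, and the relation $\G^{-1}\F\G=\F^k$ gives the multiplication rule $(\G^p\F^q)^s = \G^{sp}\F^{q(k^{p(s-1)}+\cdots+k^p+1)}$. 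The correspondence then reduces to showing that the two conditions of Theorem~\ref{Harvey Condition} for $\phi_H$ are equivalent to conditions (i)--(vi) of Definition~\ref{defn:meta_cyc_dataset}, and that the resulting tuple is a complete invariant of the weak conjugacy class in the sense of Definition~\ref{defn:weak_conj}.

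For the forward map, given an action $(H,(\G,\F))$ I would set $\phi_H(\xi_i)=\G^{p_i}\F^{q_i}$ with $p_i=c_{i1}\tfrac{m}{n_{i1}}$ and $q_i=c_{i2}\tfrac{n}{n_{i2}}$, where $(c_{i1},n_{i1})$ is the rotation datum of $\bar{\G}$ on $\O_{\langle\F\rangle}$ at the cone point beneath $x_i$ and $(c_{i2},n_{i2})$ is the $\F$-rotation datum, so that $n_{i1}\mid m$ and $n_{i2}\mid n$, yielding (ii)(a). Condition (i) is exactly the Riemann--Hurwitz identity of Theorem~\ref{Harvey Condition}(i) for $\O_H$. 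The order $n_i$ of the cone point is the order $s_i$ of the stabilizer generator $\phi_H(\xi_i)$; feeding the normal form into $(\G^{p_i}\F^{q_i})^{s_i}=1$ and using $\G^u=\F^r$ to convert the $\G$-exponent (when divisible by $u$, via $t_i$) into an $\F$-exponent gives precisely the two congruences of (ii)(b). Projecting the long relation $\prod_j\xi_j\prod_i[\alpha_i,\beta_i]\mapsto 1$ to $H/\langle\F\rangle$ kills the commutators and produces (iii) with its witness $w$; collecting the full product in normal form, so that each $q_i$ is twisted by the $\G$-powers to its right, produces the exponent $A$ of (iv), with $\G^{wu}=\F^{wr}$ accounting for the $-wr$ term and the commutator subgroup $[H,H]=\langle\F^d\rangle$, $d=\gcd(n,k-1)$, accounting for the $d\theta$ term when $g_0\geq 1$.

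For the reverse map I would use the data set to define $\phi_H$ by the same formulas and verify the two conditions of Theorem~\ref{Harvey Condition}: condition (i) is (i), order-preservation at the cone points is (ii), and the long relation is met by (iii)--(iv) together with a suitable choice of images for the handle generators $\alpha_j,\beta_j$ whose commutators realize the required element of $\langle\F^d\rangle$. The delicate point, and the one I expect to be the main obstacle, is surjectivity of $\phi_H$ in the low-genus cases, since for small $g_0$ the handle generators contribute little or nothing and the image must be generated by the $\phi_H(\xi_i)$ alone. This is exactly what conditions (v) (for $g_0=0$) and (vi) (for $g_0=1$) encode: the exponent vectors $(p_{i'})$ and $(q_{i'})$, indexed cyclically over $\ell v$ repetitions of the cone points, build words in the $\phi_H(\xi_i)$ equal to $\G$ and to $\F$ respectively, as the stated congruences force reduction, via $\G^u=\F^r$, to $\G^{1+au}\F^{-ar}=\G$ and $\G^{bu}\F^{1-br}=\F$, with the parameters $m',n',\alpha,\beta$ in (vi) measuring the extra contribution of the single handle and refining (iv) accordingly. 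When $g_0\geq 2$ the handle generators already surject onto $H$, so no analogue of (v)--(vi) is needed.

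Finally, to show that the metacyclic data set is a complete invariant of the weak conjugacy class, I would apply Lemma~\ref{prop:ds-action} at both levels of the tower: the sub-collection $\{(c_{i2},n_{i2})\}$ determines the conjugacy class of $\langle\F\rangle$ as a $\Z_n$-action on $S_g$, while $\{(c_{i1},n_{i1})\}$ determines the conjugacy class of $\bar{\G}$ as a cyclic action on the surface underlying $\O_{\langle\F\rangle}$, and Lemma~\ref{lem:riem_surf_fix} makes the fixed-point distributions rigid enough that two epimorphisms producing the same tuple are intertwined by an isomorphism $\psi\colon\pi_1^{\orb}(\O_{H_1})\to\pi_1^{\orb}(\O_{H_2})$ and an isomorphism $\chi\colon H_1\to H_2$ satisfying the three conditions of Definition~\ref{defn:weak_conj}. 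Conversely, weakly conjugate actions manifestly induce the same tuple, so the two assignments are mutually inverse and yield the claimed bijection.
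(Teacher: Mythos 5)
Your proposal is correct and takes essentially the same route as the paper: both define the surface kernel map by $\xi_i \mapsto \G^{c_{i1}\frac{m}{n_{i1}}}\F^{c_{i2}\frac{n}{n_{i2}}}$, match conditions (i)--(iv) of Definition~\ref{defn:meta_cyc_dataset} to Harvey's criterion and the long relation, use (v)--(vi) precisely for surjectivity in the $g_0 = 0$ and $g_0 = 1$ cases (with $\alpha_1 \mapsto \G^{\alpha}$, $\beta_1 \mapsto \F^{\beta}$ when $g_0 = 1$ and the handle generators handling $g_0 \geq 2$), and invoke Lemma~\ref{lem:riem_surf_fix} to pin down $D_F$ and $D_G$, hence the component-wise conjugacy demanded by Definition~\ref{defn:weak_conj}. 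Your extra details, such as the normal-form power formula and the identification of the $d\theta$ term with $[H,H] = \langle \F^{d} \rangle$ for $d = \gcd(n,k-1)$, are consistent elaborations of the paper's argument rather than a different method.
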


\begin{proof}
Let $\D$ be a metacyclic data set of degree $u \cdot n$ with twist factor $k$, amalgam $r$ and genus $g$ (as in Definition~\ref{defn:meta_cyc_dataset} above). We need to show that $\D$ represents the weak conjugacy class of a $\M(u,n,r,k)$-action on $S_g$ represented by $(H,(\G,\F))$, where $H = \langle \F,\G \rangle$. To see this, we first show the existence of an epimorphism $\phi_H : \pi_1^{orb} (\O_H) \rightarrow H$ that preserves the orders of the torsion elements. Let $H$ and $\pi_1^{orb}(\O_H)$ have presentations given by
\begin{gather*} 
\langle \F,\G\, \mid \,\F^n = 1,  \G^u = \F^r, \G^{-1}\F\G = \F^{k} \rangle \cong \M(u,n,r,k)\text{ and} \\
\langle \alpha_1, \beta_1 , \cdots , \alpha_{g_0} , \beta_{g_0}, \xi_1 , \cdots , \xi_{\ell} \, \mid \, \xi_1^{n_1} = \cdots = \xi_{\ell}^{n_\ell} = \prod_{j=1}^{\ell} \xi_j \prod_{i=1}^{g_0}[\alpha_i , \beta_i] = 1 \rangle, 
\end{gather*} respectively.
	
	We consider the map
	$$\displaystyle \xi_i \xmapsto{\phi_H}  \G^{c_{i1} \frac{m}{n_{i1}}} \F^{c_{i2} \frac{n}{n_{i2}}}, \text{ for } 1 \leq i \leq \ell,$$
	where $m := \dfrac{un}{r}$. Then condition (ii) of Definition~\ref{defn:meta_cyc_dataset} would imply that $\phi_H$ is a map which is order-preserving on torsion elements. For clarity, we break the argument for the surjectivity of $\phi_H$ into the following three cases. 

\textit{Case 1: $g_0 = 0$}. Then it follows from conditions (iii)-(iv) that $\phi_H$ satisfies the long relation $\prod_{i=1}^{\ell} \xi_i = 1$. Moreover, the surjectivity of $\phi_H$ follows from condition (v). 
	
\textit{Case 2: $g_0 \geq 2$.} In this case, $\pi_1^{orb} (\O_H)$ has additional hyperbolic generators (viewing them as isometries of the hyperbolic plane), namely the $\alpha_i$ and the $\beta_i$. Extending $\phi_H$ by mapping $\alpha_1 \xmapsto{\phi_H} \G, \beta_1 \xmapsto{\phi_H} \F$ yields an epimorphism. Moreover, by carefully choosing the images of the $\alpha_i$ and the $\beta_i$, for $i \geq 2$, conditions (iii)-(iv) would ensure that the long relation $\prod_{j=1}^{\ell} \xi_j \prod_{i=1}^{g_0}[\alpha_i ,\beta_i] = 1$ is satisfied. 
	
\textit{Case 3: $g_0 = 1$.} In this case, $\pi_1^{orb} (\O_H)$ would have two additional hyperbolic generators, namely the $\alpha_1$ and the $\beta_1$. We extend $\phi_H$ by defining $\alpha_1 \xmapsto{\phi_H} \G^{\alpha}$ and $\beta_1 \xmapsto{\phi_H} \F^{\beta}$. We then apply conditions (iii),(iv) and (vi) to obtain the desired epimorphism.
	
It remains to be shown that $\D$ determines $\F,\G \in \Homeo^{+}(S_g)$ up to conjugacy. But by carefully applying Lemma~\ref{lem:riem_surf_fix}, we see that 

\begin{gather*}\displaystyle D_F = (n,g_1;((v_{ij}^{-1},t_i),\frac{t_i |\mathbb{f}_{\F^{\frac{n}{t_i}}}(v_{ij},t_i)|}{n}) : v_{ij} \in \mathbb{Z}_{t_i}^{\times}, \, t_i \mid n),
\end{gather*}
where $$\displaystyle |\mathbb{f}_{\F^{\frac{n}{t_i}}}(v_{ij},t_i)|= |\mathbb{F}_{\F^{\frac{n}{t_i}}}(v_{ij},t_i)| - \sum_{\substack{t_{i'} \in \mathbb{N} \\ t_{i'} \neq t_i \\ t_i \mid t_{i'} \mid n}} \sum_{\substack{ (v_{i'j'},t_{i'}) = 1 \\  v_{ij} \equiv v_{i'j'} (\text{mod} \, t_i)}} |\mathbb{f}_{\F^{\frac{n}{t_{i'}}}}(v_{i'j'},t_{i'})|$$ and $g_1$ is determined by Riemann-Hurwitz equation, and

\begin{gather*}\displaystyle D_G = (m,g_2;((u_{ij}^{-1},m_i),\frac{m_i |\mathbb{f}_{\G^{\frac{m}{m_i}}}(u_{ij},m_i)|}{m}) : u_{ij} \in \mathbb{Z}_{m_i}^{\times}, \, m_i \mid m),
\end{gather*}
where $$\displaystyle |\mathbb{f}_{\G^{\frac{m}{m_i}}}(u_{ij},m_i)|= |\mathbb{F}_{\G^{\frac{m}{m_i}}}(u_{ij},m_i)| - \sum_{\substack{m_{i'} \in \mathbb{N} \\ m_{i'} \neq m_i \\ m_i \mid m_{i'} \mid m}} \sum_{\substack{ (u_{i'j'},m_{i'}) = 1 \\  u_{ij} \equiv u_{i'j'} (\text{mod} \, m_i)}} |\mathbb{f}_{\G^{\frac{m}{m_{i'}}}}(u_{i'j'},m_{i'})|$$ and $g_2$ is determined by Riemann-Hurwitz equation.
	
	Conversely, consider the weak conjugacy class of $\M(u,n,r,k)$-actions on $S_g$ represented by $(H, (\G,\F))$, where $H = \langle \F,\G \rangle$. By Theorem~\ref{Harvey Condition}, there exists a surjective homomorphism $$\phi_H : \pi_1^{orb}(\O_H) \to H : \displaystyle \xi_i \xmapsto{\phi_H} \G^{c_{i1} \frac{m}{n_{i1}}} \F^{c_{i2} \frac{n}{n_{i2}}}, \text{ for } 1 \leq i \leq \ell, $$
which is order-preserving on the torsion elements. This yields a metacyclic data set $\D$ of degree $u \cdot n$ with twist factor $k$, amalgam $r$ and genus $g$ as in Definition~\ref{defn:meta_cyc_dataset}. By Theorem~\ref{Harvey Condition}, $\D$ satisfies condition (i) of Definition~\ref{defn:meta_cyc_dataset}. Moreover, condition (ii) follows from the fact that $\phi_H$ is order-preserving on torsion elements. Furthermore, conditions (iii)-(iv) follow from the long relation satisfied by $\pi_1^{orb}(\O_H)$, and condition (v)-(vi) are implied by the surjectivity of $\phi_H$. Thus, we obtain the metacyclic data set $\D$ of degree $u \cdot n$ with twist factor $k$, amalgam $r$ and genus $g$, and our assertion follows.
\end{proof}

\noindent We denote the data sets $D_F$ and $D_G$ (representing the cyclic factors of $H$) derived from the metacyclic data set $\D$ appearing in the proof of Proposition~\ref{prop:main} by $\D_1$ and $\D_2$, respectively. Thus, our main theorem will now follow from Proposition~\ref{prop:main}.

\begin{theorem}[Main theorem]
\label{thm:main}
Let $F,G \in \Mod(S_g)$ be of orders $n,m$, respectively. Then $\lb F,G \rb_{u,r,k} = 1$ if and only if there exists a metacyclic data set $\D$ of degree $u \cdot n$, twist factor $k$, amalgam $r$, and genus $g$ such that $\D_1 = D_F$ and $\D_2 = D_G$.
\end{theorem}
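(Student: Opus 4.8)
The plan is to deduce Theorem~\ref{thm:main} directly from Proposition~\ref{prop:main}, which already carries all of the substantive combinatorial content; what remains is to reconcile the language of weak conjugacy classes of metacyclic \emph{actions} with the assertion about conjugates of the mapping classes $F,G$. The two bridges I would use are the Nielsen realization theorem, which lifts a finite subgroup of $\Mod(S_g)$ to a finite subgroup of $\Homeo^+(S_g)$, and Lemma~\ref{prop:ds-action}, which says that two cyclic actions are conjugate precisely when their data sets coincide. The crucial bookkeeping point is that $\D_1$ and $\D_2$ are \emph{defined} to be the cyclic data sets of the two generators of the metacyclic action attached to $\D$ in the proof of Proposition~\ref{prop:main}; this is what lets us match $\D_1,\D_2$ against $D_F, D_G$.

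For the sufficiency direction, suppose a metacyclic data set $\D$ of degree $u\cdot n$, twist factor $k$, amalgam $r$ and genus $g$ is given with $\D_1 = D_F$ and $\D_2 = D_G$. First I would invoke Proposition~\ref{prop:main} to obtain a metacyclic action $(H,(\G,\F))$ on $S_g$ with $H = \langle \F,\G\rangle \cong \M(u,n,r,k)$ whose weak conjugacy class is represented by $\D$. Since the normal cyclic factor $\langle \F \rangle$ has data set $\D_1 = D_F$, Lemma~\ref{prop:ds-action} gives that $\langle \F \rangle$ is conjugate in $\Homeo^+(S_g)$ to the $\Z_n$-action generated by a standard representative of $F$; passing to mapping classes, the class $F' := [\F]$ is therefore a conjugate of $F$, and an identical argument applied to $\langle \G \rangle$ and $\D_2 = D_G$ produces a conjugate $G' := [\G]$ of $G$. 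Since $\langle F',G'\rangle \cong H \cong \M(u,n,r,k)$, we conclude $\lb F,G\rb_{u,r,k} = 1$.

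For the necessity direction, assume $\lb F,G\rb_{u,r,k} = 1$, so there are conjugates $F',G'$ of $F,G$ with $\langle F',G'\rangle \cong \M(u,n,r,k)$. By the Nielsen realization theorem the finite subgroup $\langle F',G'\rangle$ is realized by a finite $H < \Homeo^+(S_g)$, with generators $\F',\G'$ realizing $F',G'$ of the same orders. Applying Proposition~\ref{prop:main} to the action $(H,(\G',\F'))$ produces a metacyclic data set $\D$ of the required degree, twist factor, amalgam and genus. By the very definition of $\D_1$ as the data set of the first cyclic factor, $\D_1 = D_{\F'} = D_{F'}$, and since $F'$ is conjugate to $F$, Lemma~\ref{prop:ds-action} yields $D_{F'} = D_F$; hence $\D_1 = D_F$, and the same reasoning gives $\D_2 = D_G$, which is exactly the data set required by the statement.

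I anticipate no serious analytic obstacle, since all of it lives inside Proposition~\ref{prop:main}; at the level of Theorem~\ref{thm:main} only two bookkeeping points demand care. The first is ensuring that the \emph{entire} metacyclic subgroup $\langle F',G'\rangle$ — not merely its individual elements — lifts coherently to $\Homeo^+(S_g)$, which is precisely why the full Nielsen realization theorem (rather than only the element-wise Nielsen-Kerckhoff correspondence) must be invoked in the necessity direction. The second is the identification of $\D_1,\D_2$ with $D_{F'},D_{G'}$, which is immediate once one recalls that these are defined to be the cyclic data sets extracted from the action associated to $\D$; the only thing to verify is that this extraction is compatible, under conjugacy, with the data sets $D_F, D_G$ of the original classes, and that compatibility is supplied by Lemma~\ref{prop:ds-action}.
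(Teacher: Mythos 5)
Your proposal is correct and matches the paper's own route: the paper likewise derives Theorem~\ref{thm:main} directly from Proposition~\ref{prop:main}, with $\D_1,\D_2$ defined as the cyclic data sets of the generators extracted in that proof, and with the Nielsen realization theorem and Lemma~\ref{prop:ds-action} (via Remark~\ref{rem:ext_weak_conj}) supplying exactly the Homeo/Mod and conjugacy-invariance bridges you spell out. Your write-up merely makes explicit the bookkeeping the paper leaves implicit in the sentence ``our main theorem will now follow from Proposition~\ref{prop:main}.''
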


\subsection{Liftability of torsion under finite cyclic covers}
\label{sec:lift}

From the viewpoint of liftability, a metacyclic group $\langle \F,\G \rangle$ acts on $S_g$ if and only if there exists $\bar{\G} \in \Aut(\O_{\langle \F \rangle})$ that lifts under the branched cover $S_g \to \O_{\langle \F \rangle}$ to $\G$. From Birman-Hilden theory~\cite{MW}, this is equivalent to requiring the existence of a short exact sequence:
$$1 \to \langle \F \rangle \to \langle \F, \G \rangle \to \langle \bar{\G} \rangle \to 1.$$

Let $S_{h,b}$ denote the closed oriented surface of genus $h$ with $b$ marked points. Given a branched cover $p : S_g \to S_{h,b}$, let $\LMod_p(S_{h,b})$ (resp. $\SMod_p(S_g)$) denote the liftable (resp. symmetric) mapping class group of $p$. Our main theorem can now be equivalently stated as follows.
	\begin{theorem}[Main theorem-Alternative version]
		\label{thm:lift}
		Let $p : S_g \rightarrow S_{h,b}$ be an $n$-sheeted cover with deck transformation group $\langle \F \rangle \cong \Z_n$. Then $\bar{G} \in \LMod_p(S_{h,b})$ lifts to a $G \in \SMod_p(S_g)$ if and only if there exists a metacyclic data set $\mathcal{D}$ of degree $u \cdot n$, twist factor $k$, amalgam $r$, and genus $g$ such that $\mathcal{D}_1 = D_F$ and $\D_2 = D_{G}$. 
	\end{theorem}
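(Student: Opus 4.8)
The plan is to deduce Theorem~\ref{thm:lift} from Theorem~\ref{thm:main} by means of the Birman-Hilden dictionary recorded just before the statement. As the right-hand conditions of the two theorems are identical, it suffices to reconcile the left-hand ones: I must show that, for the cover $p$ with cyclic deck group $\langle \F \rangle \cong \Z_n$, the liftability of a torsion $\bar{G} \in \LMod_p(S_{h,b})$ to a torsion $G \in \SMod_p(S_g)$ with $\langle F, G \rangle \cong \M(u,n,r,k)$ is equivalent to $\lb F, G \rb_{u,r,k} = 1$.

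First I would fix the dictionary. By the Nielsen-Kerckhoff theorem, the $n$-sheeted cover $p$ is realized by a standard $\Z_n$-action $\langle \F \rangle$ on $S_g$ whose quotient orbifold $\O_{\langle \F \rangle}$ has underlying surface $S_h$, with the $b$ marked points of $S_{h,b}$ appearing as its cone points. Consequently $p$ is encoded, up to the relevant equivalence, by the cyclic data set $D_F = \D_1$, matching the hypothesis $\D_1 = D_F$.

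For the forward implication, suppose $\bar{G}$ lifts to a finite-order $G \in \SMod_p(S_g)$, with standard representatives $\bar{\G}$ and $\G$, respectively. The Birman-Hilden dictionary then supplies the short exact sequence $1 \to \langle \F \rangle \to \langle \F, \G \rangle \to \langle \bar{\G} \rangle \to 1$, so that $\langle \F \rangle \lhd \langle \F, \G \rangle$. Since $\langle \F \rangle \cong \Z_n$ is cyclic, conjugation by $\G$ determines a $k \in \Z_n^{\times}$ with $\G^{-1}\F\G = \F^k$, while the relation $\G^u = \F^r$ fixes the amalgam $r$; hence $\langle \F, \G \rangle \cong \M(u,n,r,k)$ and $\lb F, G \rb_{u,r,k} = 1$. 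Theorem~\ref{thm:main} now yields a metacyclic data set $\D$ with $\D_1 = D_F$ and $\D_2 = D_G$. Conversely, given such a $\D$, Proposition~\ref{prop:main} realizes a metacyclic action $(H, (\G, \F))$ with $H = \langle \F, \G \rangle \cong \M(u,n,r,k)$ and $\langle \F \rangle \lhd H$; Lemma~\ref{lem:ind_aut} then produces a $\bar{\G} \in \Aut(\O_{\langle \F \rangle})$ lifting to $\G$, so that $\bar{G} = [\bar{\G}] \in \LMod_p(S_{h,b})$ lifts to $G = [\G] \in \SMod_p(S_g)$, with cyclic-factor data sets $D_F = \D_1$ and $D_G = \D_2$ as required.

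The step I expect to be the main obstacle is the identification of the extension parameters in the forward direction: one must verify that a symmetric torsion lift $G$, together with $F$, generates $\M(u,n,r,k)$ with the correct amalgam $r$ and index $u$ --- equivalently, that the amalgam detected by $\G^u = \F^r$ is the prescribed one rather than that of a split or differently parametrized metacyclic group. This hinges on reading the amalgam off the Birman-Hilden sequence and then matching the cyclic-factor data sets $\D_1, \D_2$ to $D_F, D_G$ exactly (not merely up to weak conjugacy), which is precisely where the fixed-point computations underlying Proposition~\ref{prop:main} must be invoked with care.
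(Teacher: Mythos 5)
Your proposal is correct and follows essentially the same route as the paper, which states Theorem~\ref{thm:lift} as an immediate reformulation of Theorem~\ref{thm:main} via the Birman--Hilden correspondence between liftability of $\bar{G}$ and the short exact sequence $1 \to \langle \F \rangle \to \langle \F, \G \rangle \to \langle \bar{\G} \rangle \to 1$; your forward and converse arguments (through Proposition~\ref{prop:main} and Lemma~\ref{lem:ind_aut}) simply make explicit what the paper leaves implicit. The ``obstacle'' you flag is handled exactly as you suspect, by the fixed-point bookkeeping in the proof of Proposition~\ref{prop:main}, with the understanding that all identifications are up to the conjugacy built into the data-set formalism.
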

\noindent Thus, the main theorem provides necessary and sufficient conditions under which periodic elements of mapping class groups will lift under finite cyclic covers. 

\begin{rem}
\label{rem:g_bar}
Given a metacyclic data set 
$$\mathcal{D} = ((u \cdot n,r,k),g_0;[(c_{11},n_{11}),(c_{12},n_{12}),n_1], \ldots , [(c_{\ell 1},n_{\ell 1}),(c_{\ell 2},n_{\ell 2}),n_{\ell} ]),$$ encoding the weak conjugacy class represented by $(\langle \F,\G \rangle, (\G,\F))$, it follows from Theorem~\ref{thm:main} that
	$$D_{\bar{G}} = (u,g_0;(c'_{11},n'_{11}), \ldots, (c'_{\ell 1},n'_{\ell 1})),$$
where $\displaystyle n'_{i1} = \frac{u}{\gcd(c_{i1} \frac{m}{n_{i1}},u)}$, and $\displaystyle c'_{i1}\frac{u}{n'_{i1}} \equiv c_{i1} \frac{m}{n_{i1}} \pmod{u}$. 
\end{rem}

\noindent This leads us to the following corollary, which is an application of Theorem~\ref{thm:lift}. 
	\begin{cor}
		\label{cor:free}
Let $p:S_{n(g-1)+1} \rightarrow S_g$ be an $n$-sheeted regular cover with deck transformation group $\langle \F \rangle \cong \Z_n$. Suppose that there exists a $\bar{G} \in \LMod_p(S_g)$ of finite order with $\O_{\langle \bar{\G}\rangle} \approx S_0$ that lifts to a $G \in \SMod_p(S_{n(g-1)+1})$. Then $H = \langle F,G \rangle$ is a split metacyclic group.	
	\end{cor}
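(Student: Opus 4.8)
The plan is to reformulate splitting as the vanishing of an extension class and to verify this vanishing one prime at a time. Set $u:=|\bar G|$. Applying Lemma~\ref{lem:ind_aut}(i) to the free cyclic cover with deck group $\langle\F\rangle$ identifies the quotient orbifold of $H=\langle\F,\G\rangle$ as $\O_H = S_{n(g-1)+1}/\langle\F,\G\rangle \cong \O_{\langle\F\rangle}/\langle\bar\G\rangle = S_g/\langle\bar\G\rangle = \O_{\langle\bar\G\rangle}\approx S_0$, so the base genus of $\O_H$ is $g_0=0$. Since $\bar G$ lifts to $G\in\SMod_p(S_{n(g-1)+1})$, the subgroup $\langle\F\rangle\cong\Z_n$ is normal in $H$ with $H/\langle\F\rangle\cong\langle\bar\G\rangle\cong\Z_u$; hence $H\cong\M(u,n,r,k)$ fits into a short exact sequence $1\to\Z_n\to H\xrightarrow{\pi}\Z_u\to1$ whose associated $\Z_u$-action on $\Z_n$ is multiplication by $k$. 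I would use the standard fact that $H$ is split precisely when the class of this extension in $H^2(\Z_u;\Z_n)$ is trivial, so the goal becomes showing that this class vanishes.

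Next I would extract the two structural inputs. Because $\langle\F\rangle$ acts freely, for every $\tilde x\in S_{n(g-1)+1}$ we have $\stab_H(\tilde x)\cap\langle\F\rangle=1$, so $\pi$ restricts to an injection on each point-stabilizer. On the other hand, because $g_0=0$ the presentation~\eqref{eqn:orb-pres} of $\pi_1^{\orb}(\O_H)$ has no hyperbolic generators, so surjectivity of $\phi_H$ forces $H=\langle\phi_H(\xi_1),\dots,\phi_H(\xi_\ell)\rangle$, where each $\phi_H(\xi_i)$ generates a conjugate of a point-stabilizer. Combining these, each cyclic group $\langle\phi_H(\xi_i)\rangle$ meets $\langle\F\rangle$ trivially and therefore maps isomorphically onto its image $\pi(\langle\phi_H(\xi_i)\rangle)$ in $\Z_u$, and these images together generate $\Z_u$.

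The key step is a prime-by-prime splitting argument exploiting that $\Z_u$ is cyclic. Fix a prime $p\mid u$ and write $p^{e}\,\|\,u$. Since the subgroups $\pi(\langle\phi_H(\xi_i)\rangle)$ generate the cyclic group $\Z_u$, they cannot all lie in the unique index-$p$ subgroup, so for some $i$ the order of $\pi(\phi_H(\xi_i))$ is divisible by $p^{e}$; as $\Z_u$ is cyclic, $\pi(\langle\phi_H(\xi_i)\rangle)$ then contains the unique Sylow $p$-subgroup $P_p\cong\Z_{p^{e}}$ of $\Z_u$. Pulling $P_p$ back through the isomorphism $\langle\phi_H(\xi_i)\rangle\xrightarrow{\sim}\pi(\langle\phi_H(\xi_i)\rangle)$ yields a subgroup of $H$ of order $p^{e}$ mapping isomorphically onto $P_p$; that is, the extension splits over $P_p$, so its class restricts to $0$ in $H^2(P_p;\Z_n)$. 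As this holds for every $p\mid u$ and the restriction map $H^2(\Z_u;\Z_n)\to\prod_{p\mid u}H^2(P_p;\Z_n)$ to the Sylow subgroups is injective, the extension class is $0$. Hence $H\cong\Z_n\rtimes_k\Z_u$ is split, which is exactly the assertion.

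I expect the passage from local to global splittings to be the crux. Freeness alone only provides splittings over those cyclic subgroups of $\Z_u$ that are realized as stabilizers, and in general such local data need not assemble into a complement; it is the genus-zero hypothesis---forcing $H$ to be generated by stabilizers, so that their $\pi$-images exhaust a generating set of $\Z_u$---together with the injectivity of restriction to Sylow subgroups of the \emph{cyclic} group $\Z_u$ that upgrades them to a genuine splitting. The delicate point to check is that the relevant stabilizer contains the entire Sylow $p$-subgroup of $\Z_u$ rather than a proper part of it, which is precisely where cyclicity of $\Z_u$ is used; by contrast, when $g_0\geq1$ the extra hyperbolic generators can carry the missing part of $\Z_u$ and the conclusion genuinely fails, as the dicyclic actions show.
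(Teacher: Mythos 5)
Your proof is correct, and its opening reductions coincide with the paper's: both use the Birman--Hilden exact sequence $1 \to \langle\F\rangle \to H \to \langle\bar\G\rangle \to 1$ together with Lemma~\ref{lem:ind_aut}(i) to get $g_0=0$, and both rest on the same two structural facts, namely that with $g_0=0$ the images $\phi_H(\xi_i)$ of the elliptic generators already generate $H$, and that freeness of the deck group forces $\langle\phi_H(\xi_i)\rangle\cap\langle\F\rangle=1$ (each $\phi_H(\xi_i)$ having a fixed point). Where you genuinely diverge is in assembling these local splittings into a global one. The paper stays elementary: writing $\phi_H(\xi_i)=\G^{\gamma_i}\F^{\delta_i}$, it observes that $\langle \G^{\gamma_i}\F^{\delta_i}\rangle$ is a complement to $\langle\F\rangle$ in $\langle \G^{\gamma_i},\F\rangle$, and then runs an induction on $\ell$, replacing $\langle G^{\gamma_1},G^{\gamma_2}\rangle$ by $\langle G^{\gcd(\gamma_1,\gamma_2)}\rangle$ and deriving a contradiction from a hypothetical relation $(G')^a=F^b$ inside the cyclic group $\langle G\rangle$, to conclude that $\langle G^{\gamma_1},\dots,G^{\gamma_\ell},F\rangle=H$ is split. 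You instead encode splitness as vanishing of the extension class in $H^2(\Z_{u};\Z_n)$ with the $k$-twisted action (legitimate since the kernel is abelian), use cyclicity of $\Z_u$ to force some single stabilizer image to contain each full Sylow subgroup $P_p$ --- so the class restricts to zero on every $P_p$ --- and finish with the transfer argument that restriction to Sylow subgroups is injective on $H^2$. Your route buys robustness at exactly the delicate point: the paper's $\ell=2$ step extracts a nontrivial element of $\langle G^{\gamma_i}\rangle\cap\langle F\rangle$ and declares this incompatible with $\langle G^{\gamma_i},F\rangle$ being ``split,'' which strictly speaking contradicts the complement property established earlier rather than splitness as an abstract isomorphism type; your cohomological local-to-global step needs no such bookkeeping, and it cleanly isolates where cyclicity of the quotient is indispensable (a full Sylow subgroup must lie in one stabilizer image --- for a noncyclic quotient the generation argument would not deliver this). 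What the paper's route buys in exchange is self-containedness --- no group cohomology --- and an explicit complement built from the $\phi_H(\xi_i)$, in keeping with its data-set formalism; both proofs fail for $g_0\ge 1$ for the same reason you identify, since the hyperbolic generators can then carry part of $\Z_u$ not seen by any stabilizer.
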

	
\begin{proof}
	From Theorem~\ref{thm:lift}, we have a metacyclic data set 
	$$\mathcal{D} = ((u \cdot n,r,k),0;[(c_{11},n_{11}),(c_{12},n_{12}),n_1], \ldots , [(c_{\ell 1},n_{\ell 1}),(c_{\ell 2},n_{\ell 2}),n_{\ell} ])$$
	of degree $u \cdot n$ with twist factor $k$, amalgam $r$, and genus $n(g-1)+1$.
	Following the notation in the proof of Proposition~\ref{prop:main}, let $\phi_H(\xi_i) = \G^{\gamma_i} \F^{\delta_i}$, where $\gamma_i = c_{i1}m/n_{i1}$, $\delta_i = c_{i2}n/n_{i2}$, and $\xi_i \in \pi_1^{orb}(\O_H)$ is the generator enclosing the cone point of order $n_i$. Now, as $\F$ generates a free action on $S_{n(g-1)+1}$, we have that $\langle \G^{\gamma_i} \F^{\delta_i} \rangle \cap \langle \F \rangle = \{id\}.$
Hence, it follows that $\langle G^{\gamma_i} F^{\delta_i},F \rangle$ is a split metacyclic group for all $i$, and consequently, $\langle G^{\gamma_i},F \rangle$ is a split metacyclic group for all $i$. 

Now, we claim that $\langle G^{\gamma_1}, \hdots , G^{\gamma_{\ell}},F \rangle$ is a split metacyclic group. We establish this claim by inducting on $\ell$. From the preceding argument, the statement holds for $\ell =1.$ For $\ell = 2,$ we have to show that $ \langle G^{\gamma_1},  G^{\gamma_{2}},F \rangle$ is a split metacyclic group. We can write $ \langle G^{\gamma_1},  G^{\gamma_{2}},F \rangle =  \langle G',F \rangle$, where $\langle G' \rangle = \langle G^{\gcd(\gamma_1,\gamma_2)}\rangle = \langle G^{\gamma_1},  G^{\gamma_{2}} \rangle$. Suppose we assume on the contrary that $(G')^a = F^b$, for some $a \in \Z$ and $b \in \Z_n$ with $b \neq 0$. Then $\langle (G')^a \rangle \subseteq \langle G^{\gamma_1},  G^{\gamma_{2}} \rangle$, and so we have that $\langle (G')^{at} \rangle \subseteq \langle G^{\gamma_1} \rangle \text{ or } \langle  G^{\gamma_{2}} \rangle$, for some $t$ such that $(G')^{at} \neq 1$. Hence, it follows that $G^{\gamma_1 t_1} = (G')^{at} = F^{bt} \text{ or } G^{\gamma_2 t_2} = (G')^{at} = F^{bt}$, for some $t_1,t_2 \in \Z$, which contradicts the fact that $\langle G^{\gamma_i},F \rangle$ is a split metacyclic group. Therefore, our claim holds true for $\ell = 2$. 

Suppose we assume that our claim holds for $\ell -1$. By similar arguments (as above), we have that $ \langle G^{\gamma_1}, \hdots , G^{\gamma_{\ell}},F \rangle = \langle G'', G^{\gamma_{\ell}},F \rangle$, where $\langle G'' , F \rangle $ is split metacyclic group. So, it immediately follows from the case $\ell = 2$ that our claim holds for $\ell$.  Since $\phi_H$ is surjective, we have $ \langle G^{\gamma_1}F^{\delta_1}, \hdots , G^{\gamma_{\ell}}F^{\delta_{\ell}} \rangle = H$, and hence it follows that $H$ is a split metacyclic group.
	
\end{proof}

As mentioned earlier, the bound on the order of a periodic mapping class $G \in \Mod(S_g)$ is $4g+2$ (see~\cite{AW}) which is realized by the action $D_G = (4g+2,0;(1,2),(1,2g+1),(2g-1,4g+2))$. This inspires the following corollary. 

\begin{cor}
	Let $p:S_{n(g-1)+1} \rightarrow S_g$ be a finite n-sheeted regular cover with deck transformation group $\langle \F \rangle\ \cong \Z_n$. If $n$ is prime and $(4g+2) \mid (n-1)$, then there exists a $\bar{G} \in \LMod_p(S_g)$ with $|\bar{G}| = 4g+2$. 
\end{cor}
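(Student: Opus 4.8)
The plan is to apply the preceding corollary (Corollary~\ref{cor:free}) and its machinery in reverse: rather than starting from a metacyclic action and extracting $\bar{G}$, I want to \emph{construct} the desired lift. First I would invoke the known maximal-order action on $S_g$ realizing $D_{\bar G} = (4g+2,0;(1,2),(1,2g+1),(2g-1,4g+2))$, which furnishes a candidate $\bar{G} \in \Homeo^+(S_g)$ of order $u := 4g+2$ with quotient orbifold of genus zero and three cone points of orders $2$, $2g+1$, $4g+2$. The goal is to show this $\bar{G}$ lies in $\LMod_p(S_g)$, i.e.\ that it lifts to a symmetric mapping class of $S_{n(g-1)+1}$ along the given free $\Z_n$-cover. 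By Theorem~\ref{thm:lift}, this amounts to producing a metacyclic data set $\mathcal{D}$ of degree $u\cdot n$ whose induced quotient automorphism data $D_{\bar G}$ (as described in Remark~\ref{rem:g_bar}) matches the prescribed order-$(4g+2)$ action.

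The key steps, in order, are as follows. First I would fix the twist factor: since $n$ is prime and $(4g+2)\mid(n-1)$, the cyclic group $\Z_n^\times$ has order $n-1$ divisible by $4g+2$, so by choosing a primitive element I can select $k \in \Z_n^\times$ of multiplicative order exactly $u = 4g+2$, guaranteeing $k^u \equiv 1 \pmod n$ with $k$ genuinely of order $u$. Second, because $\langle \F\rangle$ acts freely (so $r = n$, $m = u$, and the amalgam is trivial in the split sense), I would set up the surface-kernel epimorphism $\phi_H$ from Remark~\ref{rem:g_bar}: the requirement $n'_{i1} = u/\gcd(c_{i1}m/n_{i1},u)$ and $c'_{i1}(u/n'_{i1}) \equiv c_{i1}(m/n_{i1})\pmod u$ must reproduce the three cone points $(1,2),(1,2g+1),(2g-1,4g+2)$ of $D_{\bar G}$. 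This pins down the first coordinates $c_{i1}\frac{m}{n_{i1}}$ of the three generators $\xi_i$ up to the free-action normalization. Third, I would choose the second coordinates $c_{i2}\frac{n}{n_{i2}}$ to satisfy the long-relation and surjectivity conditions (iii)--(v) of Definition~\ref{defn:meta_cyc_dataset} with $g_0 = 0$; here the freeness condition $\langle \G^{\gamma_i}\F^{\delta_i}\rangle \cap \langle\F\rangle = \{\mathrm{id}\}$ from the proof of Corollary~\ref{cor:free} is exactly what lets the $\F$-exponents be solved, since working modulo $n$ with $k$ of full order $u$ makes the relevant geometric-series factors $k^{\gamma_i(s-1)}+\cdots+1$ invertible or controllable.

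The main obstacle I anticipate is verifying the surjectivity condition (v) and the long-relation balance (iii)--(iv) \emph{simultaneously} while keeping $D_{\bar G}$ fixed. The delicate point is that condition (v) requires exhibiting integer vectors $(p_{i'})$, $(q_{i'})$ and auxiliaries $a,b$ so that the two generators $\G,\F$ of $\M(u,n,n,k)$ both lie in the image; because $k$ has order $u$ and $n$ is prime, the key arithmetic fact I expect to lean on is that the $\Z_n$-submodule generated by the $\F$-exponents $\{c_{i2}\frac{n}{n_{i2}}\}$ under multiplication by powers of $k$ is all of $\Z_n$ (as $k$ acts irreducibly on $\Z_n$ when $n$ is prime and $k\neq 1$), which forces the $\F$-component of the image to be surjective, while the genus-zero cone structure already supplies the $\G$-component. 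Once surjectivity is secured, condition (i) is automatic: the Riemann--Hurwitz count for the degree-$u\cdot n$ action over a genus-zero base with the chosen cone orders is pinned to genus $n(g-1)+1$ precisely because $\F$ contributes the free factor $n$ over the order-$(4g+2)$ action on $S_g$. I would finish by invoking Theorem~\ref{thm:lift} to conclude that the constructed $\mathcal{D}$ certifies $\bar G \in \LMod_p(S_g)$ with $|\bar G| = 4g+2$.
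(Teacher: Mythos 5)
Your proposal is correct and follows essentially the same route as the paper: choose $k \in \Z_n^{\times}$ of order $4g+2$ (possible since $\Z_n^{\times}$ is cyclic of order $n-1$ and $(4g+2) \mid (n-1)$), and certify the lift of the maximal-order action $D_{\bar{G}} = (4g+2,0;(1,2),(1,2g+1),(2g-1,4g+2))$ via Theorem~\ref{thm:lift} by producing a genus-$(n(g-1)+1)$ metacyclic data set of degree $(4g+2)\cdot n$ with $r=n$ and free $\F$-action. The only difference is one of presentation: where you argue existence of the data set by solving the conditions of Definition~\ref{defn:meta_cyc_dataset} (using primality of $n$ for surjectivity in the $\F$-direction), the paper simply exhibits it explicitly as $\D = ((n \cdot 4g+2,n,k),0;[(1,2),(1,n),2],[(1,2g+1),(n-k^2,n),2g+1],[(2g-1,4g+2),(0,1),4g+2])$.
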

\begin{proof}
Since $(4g+2) \mid (n-1)$, there exists a $k \in \Z_n^{\times}$ such that $|k| = 4g+2$. Let $G \in \SMod_p(S_{n(g-1)+1})$ be a lift of $\bar{G}$. Now from Proposition~\ref{prop:main}, it can be easily seen that the metacyclic data set
$\D = ((n \cdot 4g+2,n,k),0;[(1,2),(1,n),2],[(1,2g+1),(n-k^2,n),2g+1],[(2g-1,4g+2),(0,1),4g+2])$ represents the weak conjugacy class of $(\langle \F,\G \rangle,\G,\F)$ with $\D_1 = D_F= (n,g,1;)$, $\D_2 = D_G = (4g+2,\frac{(n-1)(g-1)}{4g+2};(1,2),(1,2g+1),(2g-1,4g+2))$ and $D_{\bar{G}} = (4g+2,0;(1,2),(1,2g+1),(2g-1,4g+2)).$ Hence, our assertion follows.
\end{proof}

\section{Applications}
\label{sec:appl}

\subsection{Bound on the order of a non-split metacyclic action}

\label{subsec:bound}
In this subsection, we derive a realizable bound for the order of a non-split metacyclic subgroup of $\Mod(S_g)$. We will need the following technical lemma concerning metacyclic groups.
\begin{lemma}
	\label{lem:prime}
Let $H = \langle F,G \,\mid \, G^u = F^r, F^n = 1, G^{-1}FG = F^k \rangle.$ Suppose that there exists $x,y \in H$ such that $H = \langle x,y \rangle$ and at least one of $x$ or $y$ is of prime order.  Then $H$ is a split metacyclic group. 
\end{lemma}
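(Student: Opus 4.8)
The plan is to prove the statement by reduction to a structural property of non-split metacyclic groups. A cyclic group is split (it is $\Z_{n'} \rtimes \Z_1$), so I may assume $H$ is non-cyclic and suppose, for contradiction, that $H$ is non-split. The input I would use is the standard fact — which can be read off from the classification of metacyclic groups in \cite{CEH} — that splitting of a finite metacyclic group is a $2$-local phenomenon: a non-split finite metacyclic group has a generalized quaternion Sylow $2$-subgroup $Q$. In particular such an $H$ is non-abelian and has a unique involution $z$, which is then central in $H$.

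Write $H = \langle x, y \rangle$ with $|x| = p$ prime. If $p = 2$, then $x$ is an involution, so $x = z$ by uniqueness; as $z$ is central, $\langle x, y \rangle = \langle z, y \rangle$ is abelian, contradicting the non-abelianness of $H$. Hence $p$ is odd. For odd $p$, I would pass to the normal closure $K$ of $\langle x \rangle$ in $H$. Since $N := \langle F \rangle \lhd H$, the subgroup $K \cap N$ is normal in $K$ with $K/(K \cap N) \cong KN/N = \langle \bar{x} \rangle \cong \Z_p$, so $[K : K \cap N] = p$ is odd; as $K \cap N \leq N$ is cyclic, a Sylow $2$-subgroup of $K$ lies in $K \cap N$ and is therefore cyclic. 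But $K$ is generated by $H$-conjugates of $x$, all of odd order $p$, so Burnside's normal $2$-complement theorem (a group with cyclic Sylow $2$-subgroup is $2$-nilpotent) forces these generators into the odd-order normal $2$-complement; thus $|K|$ is odd. Consequently $Q \cap K = 1$, so $Q$ embeds into $H/K$. The latter is cyclic, being generated by the image of $y$, and a cyclic group contains no copy of the non-cyclic group $Q$ — the desired contradiction. Therefore $H$ is split.

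The main obstacle is the first step: correctly isolating and invoking the characterization of non-splitting through generalized quaternion Sylow $2$-subgroups, which is precisely what collapses the problem to the two short arguments above. If one prefers to avoid this input, the alternative is to case on the image $\pi(x) \in H/N \cong \Z_u$. When $\pi(x)$ generates $\Z_u$ one gets $|x| = u = p$ and $\langle x \rangle$ is a cyclic complement to $N$, so $H = N \rtimes \langle x \rangle$; when $\pi(x) = 1$, the subgroup $\langle x \rangle$ is the order-$p$ subgroup of the cyclic group $N$ and, writing $y^u = F^s$, the hypothesis $H = \langle x, y \rangle$ forces $\gcd(n/p, s) = 1$, from which a short computation yields that $H$ is either cyclic or $\Z_p \rtimes \langle y \rangle$. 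The remaining case, in which $\pi(x)$ is a proper nontrivial subgroup of $\Z_u$, is the genuinely delicate one, and it is exactly there that the normal-closure/Burnside argument (or, cohomologically, the vanishing of the restriction of the extension class of $1 \to N \to H \to \Z_u \to 1$ to $\langle \pi(x) \rangle$) does the real work.
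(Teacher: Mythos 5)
Your argument rests entirely on the claimed ``standard fact'' that a non-split finite metacyclic group has a generalized quaternion Sylow $2$-subgroup (hence a unique, central involution), and this fact is false. A counterexample is $H = \M(4,8,4,-1) = \langle F,G \mid F^8 = 1,\ G^4 = F^4,\ G^{-1}FG = F^{-1}\rangle$, a $2$-group of order $32$ which appears in the paper's own Table~\ref{tab:meta_dsets2} as a non-split metacyclic subgroup of $\Mod(S_{11})$. Here $G^2$ is central (it commutes with $F$ because $G^{-2}FG^2 = F$), so $(G^2F^2)^2 = G^4F^4 = F^8 = 1$; thus $G^2F^2$ and $G^2F^6$ are involutions distinct from $F^4$, and $G^2F^2$ is not even central, since $G^{-1}(G^2F^2)G = G^2F^6$. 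A $2$-group with three involutions is not generalized quaternion. That $H$ is genuinely non-split is checked directly from the normal form $G^iF^j$: no element has order $16$, and every element of order $4$ or $8$ has $F^4$ among its powers (e.g.\ $(GF^j)^2 = G^2$ so $(GF^j)^4 = F^4$, and $(G^2F^j)^2 = F^{4+2j}$), so in any putative decomposition $\Z_{n'}\rtimes\Z_{m'}$ with $n',m'\geq 4$ the normal cyclic subgroup and its complement would both contain $F^4$, while $\{n',m'\}=\{16,2\}$ would require an element of order $16$. With the quaternion input gone, both halves of your proof collapse: in the $p=2$ case you cannot identify $x$ with a central involution, and in the odd case your normal-closure computation only reaches a contradiction by embedding the non-existent quaternion subgroup $Q$ into the cyclic group $H/K$.

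Some of your intermediate work is correct and salvageable: for odd $p$, the Burnside argument showing that the normal closure $K$ of $\langle x\rangle$ has odd order, with $H/K$ cyclic, is sound and implies $H$ has cyclic Sylow $2$-subgroups; but you would still need a correct criterion converting this into splitness of $H$, and the proposal supplies none (the cohomological aside merely restates what must be proved, and you yourself concede that the ``genuinely delicate'' case of your fallback is handled exactly by the flawed argument). The paper's proof takes a completely different and elementary route that avoids Sylow theory altogether: writing $x = G^{i_1}F^{j_1}$, it argues that in a non-split $H$ one is reduced to $H = \langle F^{\alpha}, y\rangle$ with $\langle F^{\alpha}\rangle \lhd H$ of prime order, whence either $\langle y\rangle \cap \langle F^{\alpha}\rangle = 1$ and $H = \langle F^{\alpha}\rangle \rtimes \langle y\rangle$, or $F^{\alpha} \in \langle y\rangle$ and $H$ is cyclic --- split in either case, contradicting non-splitness. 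The prime-order hypothesis is exploited there only through this intersection dichotomy for a prime-order normal subgroup, which is the ingredient your approach is missing.
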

\begin{proof}
Let $x = G^{i_1}F^{j_1}$ and $y = G^{i_2}F^{j_2}$ and let us assume without loss of generality that $x$ is of prime order $p$. Suppose we assume on the contrary that $H$ is a non-split metacyclic group. Then either $G^{i_1}F^{j_1} = F^{\alpha}$, for some $\alpha$, or $G^{i_1}F^{j_1} = (G')^{\beta}$, for some $\beta$, where $G'$ is chosen so that $H = \langle F , G' \rangle$. This implies that $H = \langle G^{i_1}F^{j_1}, G^{i_2}F^{j_2} \rangle = \langle F^{\alpha}, G^{i_2}F^{j_2} \rangle$. Since $F^{\alpha}$ is of prime order and $\langle F^{\alpha} \rangle \lhd H$, $H$ must be split metacyclic group, which contradicts our assumption. A similar argument works for the case when $G^{i_1}F^{j_1} = (G')^{\beta}$. Hence, it follows that $H$ is a split metacyclic group.
\end{proof}

We call $\Dic_{n} := \M(2,2n,n,-1)$ the \textit{dicyclic group} of order $4n$. We will now derive a realizable bound on the order of a finite non-split metacyclic subgroup of $\Mod(S_g)$.

\begin{prop}
	\label{prop:bound}
Suppose that $H < \Mod(S_g)$ is a finite non-split metacyclic group. Then $|H| \leq 4g$ and this bound is realized when $g$ is even and $H \cong \Dic_{g}$.
\end{prop}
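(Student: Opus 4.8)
The plan is to compare $|H|$ with the genus through the Riemann--Hurwitz relation and to reduce everything to a single inequality, dispatching the few extremal orbifolds by hand. Write the quotient orbifold $\O_H$ with signature $(g_0; n_1,\dots,n_\ell)$ and set $\chi := 2g_0 - 2 + \sum_{i=1}^{\ell}(1 - 1/n_i)$, so that Theorem~\ref{Harvey Condition}(i) reads $2g - 2 = |H|\,\chi$ with $|H| = un$. Since $g = 1 + |H|\chi/2$, the desired bound $|H| \le 4g$ is equivalent to $|H|(1 - 2\chi) \le 4$, which holds trivially as soon as $\chi \ge 1/2$. A short estimate then shows $\chi \ge 1/2$ in all but a few cases: if $g_0 \ge 1$ then (as $g \ge 2$ forces $\ell \ge 1$) $\chi \ge \sum(1 - 1/n_i) \ge 1/2$, while for $g_0 = 0$ one has $\chi \ge \ell/2 - 2 \ge 1/2$ once $\ell \ge 5$, and $\ell \le 2$ is non-hyperbolic. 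This isolates the genuine cases $g_0 = 0$ with $\ell \in \{3,4\}$.

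In these cases I would invoke the non-split hypothesis through Lemma~\ref{lem:prime}. For $\ell = 3$ the surface-kernel epimorphism gives $H = \langle \phi_H(\xi_1), \phi_H(\xi_2)\rangle$ (the third generator being determined by the long relation), so $H$ is two-generated; since $H$ is non-split, Lemma~\ref{lem:prime} forbids any generator of prime order, forcing every cone order $n_i$ to be composite, hence $n_i \ge 4$ and $\chi \ge 1/4$. It then remains to bound $|H|$: one verifies that the non-split metacyclic two-generator quotients of the triangle groups $(0;n_1,n_2,n_3)$ with $n_i \ge 4$, and of the admissible quadrilateral groups $(0;n_1,\dots,n_4)$ with $0<\chi<1/2$, have order small enough that $|H|(1 - 2\chi) \le 4$. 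Here Lemma~\ref{lem:prime} again eliminates any signature compelling a prime-order generator, and the sparse involution structure of non-split metacyclic groups---for instance the unique involution of a $\Dic$-type group, which prevents generation by two distinct involutions---rules out configurations such as $(0;2,2,2,n)$. I expect this finite but delicate enumeration to be the main obstacle.

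For sharpness I would produce an explicit action. Take $H \cong \Dic_g = \M(2,2g,g,-1)$, so that $|H| = 4g$, $m = un/r = 4$ and $k = -1$; with $g$ even as in the statement, consider the metacyclic data set with $g_0 = 0$ and cone structure of type $(0; 4, 4, 2g)$ realized by $\phi_H(\xi_1) = \G$, $\phi_H(\xi_2) = \G\F^{-(g+1)}$ and $\phi_H(\xi_3) = \F$. A direct computation (using $\G^{-1}\F\G = \F^{-1}$ and $\G^2 = \F^{g}$) shows these three images have orders $4$, $4$ and $2g$, multiply to the identity, and generate $H$, while the Riemann--Hurwitz equation returns genus exactly $g$. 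Checking that this data meets conditions (i)--(v) of Definition~\ref{defn:meta_cyc_dataset} and appealing to Proposition~\ref{prop:main} then exhibits $\Dic_g$ as a non-split metacyclic action on $S_g$ attaining $|H| = 4g$, so the bound is realized.
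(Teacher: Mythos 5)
Your frame matches the paper's: the Riemann--Hurwitz reduction to $g_0=0$ with $\ell\in\{3,4\}$ is exactly the paper's first step, the use of Lemma~\ref{lem:prime} to exclude prime cone orders when $\ell=3$ is the same, and your sharpness construction is correct and verifiable --- the triple $\phi_H(\xi_1)=\G$, $\phi_H(\xi_2)=\G\F^{-(g+1)}$, $\phi_H(\xi_3)=\F$ of orders $4,4,2g$ does satisfy the long relation (since $\G^2\F^{-g}=1$), generates $\Dic_g$, and Riemann--Hurwitz returns genus $g$, in line with the paper's concluding dicyclic data set of signature type $(0;4,4,2g)$.

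However, the decisive middle step --- showing that none of the surviving signatures supports a \emph{non-split} metacyclic action of order exceeding $4g$ --- is exactly what you defer (``one verifies\ldots'', ``I expect this finite but delicate enumeration to be the main obstacle''), and the two tactics you sketch for it would not succeed as stated. First, for the triangle signatures with composite cone orders ($(0;4,4,n)$, $(0;4,6,6),\ldots,(0;4,6,10)$) the right conclusion is not that a metacyclic quotient has small order: for instance $(0;4,4,n)$ is realized by split metacyclic groups $\Z_n\rtimes_k\Z_4$ of order $4n=4g+4>4g$, so no order bound of the kind you propose can hold. What must be proved is that any metacyclic action with these signatures is necessarily \emph{split}, and this is where the paper's specific machinery enters: it passes to the induced automorphism $\bar{\G}$ on $\O_{\langle\F\rangle}$, uses Remark~\ref{rem:g_bar} to constrain $\Gamma(\O_{\langle\bar{\G}\rangle})$ and hence $\Gamma(\O_{\langle\F\rangle})$ (forced in the $\ell=3$ case to be $(0;n,n,n,n)$ or $(0;4,4,4,4,4,4)$), then concludes $|\G|=|\bar{\G}|$, which by Lemma~\ref{lem:ind_aut}(iii) rules out any amalgam relation $\F^r=\G^u$ with $0<r<n$, i.e.\ forces splitness; Corollary~\ref{cor:free} likewise disposes of the $\ell=4$ subcases where $\F$ acts freely or $\Gamma(\O_{\langle\F\rangle})=(1;\frac{n}{2})$. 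Your proposal never invokes Lemma~\ref{lem:ind_aut}, Remark~\ref{rem:g_bar}, Corollary~\ref{cor:free}, or Theorem~\ref{thm:main}, which jointly carry this step. Second, your involution argument for $(0;2,2,2,n)$ rests on the unproven claim that a non-split metacyclic group has a unique involution; that is immediate only when $u=2$ (an involution outside $\langle F\rangle$ would split the index-two extension), and in any case it says nothing about the remaining $\ell=4$ signatures $(0;2,2,3,3)$, $(0;2,2,3,4)$, $(0;2,2,3,5)$, which Lemma~\ref{lem:prime} cannot reach since $\ell=4$ yields three generators rather than two. So the proposal is a correct skeleton with the proof's essential content missing.
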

\begin{proof}
	We will show that if $H < \Mod(S_g)$ such that $|H| > 4g$, then $H$ cannot be a non-split metacyclic group. If $\Gamma(\O_H) = (g_0;n_1,n_2, \dots, n_{\ell})$, then $H$ satisfies the Riemann-Hurwitz equation: $$\displaystyle \frac{2g-2}{|H|}=2g_0-2+\sum_{i=1}^{\ell}\left(1-\frac{1}{n_i}\right).$$ When $|H| > 4g$, we have 
\begin{equation} 
\label{eqn:4g}
\displaystyle 2g_0-2+\sum_{i=1}^{\ell}\left(1-\frac{1}{n_i}\right)=\frac{2g-2}{|H|}<\frac{2g-2}{4g}=\frac{g-1}{2g}<\frac{1}{2},
\end{equation}
from which it follows that $g_0 = 0$ and $\ell = 3 \text{ or } 4$. 

From Lemma \ref{lem:prime}, if $g_0 = 0$, $ \ell = 3$, and there is a cone point of prime order, then $H$ cannot be a non-split metacyclic group. So, by (\ref{eqn:4g}), when $H$ is a non-split metacyclic group with $|H| > 4g$, the possible signatures for $\pi_1^{orb}(\O_H)$ are $(0;2,2,3,3),(0;2,2,3,4),(0;2,2,3,5),(0;2,2,2,n),$ $(0;4,4,n),(0;4,6,6),$ $(0;4,6,8), (0;4,6,9)$, and $(0;4,6,10)$,  where $n<2g$. We will now show that none of these signatures will arise from a non-split metacyclic action. 

Assume that $H$ is a metacyclic group. Then $H = \langle F,G\rangle$, where $\F \in \Homeo^+{(S_g)}$ with $\mathcal{O}_{\langle \F \rangle} \approx S_{h,b}$ and $\bar{\G} \in \mathrm{Aut}_k(\O_{\langle \F \rangle})$. From Remark~\ref{rem:g_bar}, we have that $\Gamma(\O_{\langle \bar{\G} \rangle}) = (0;m_1,m_2, \hdots, m_{\ell})$, where $m_i \mid n_i.$ First, we will consider the case when $\ell =4$.
If $h \neq 0$, from Proposition~\ref{prop:ds-action},  we get $\Gamma(\O_{\langle \bar{\G} \rangle})$  equals either $(0;2,2,3,3)$ or $(0;2,2,2,2)$. Thus, $\F$ either generates a free action or $\Gamma(\O_{\langle \F \rangle}) = (1;\frac{n}{2})$. But, by Proposition~\ref{prop:ds-action} and Corollary~\ref{cor:free}, we can see that neither of these possibilities occur when $H$ is a non-split metacyclic group. If $h = 0$, then $\Gamma(\O_{\langle \bar{\G} \rangle}) = (0;u,u)$, where $u \in \{2,3\}$. Again, from Theorem~\ref{thm:main}, we see that $\Gamma(\O_{\langle \F \rangle})$ equals one of $(0;2,2,2,2,2,2), (0;3,3,3,3),(0;3,3,4,4),$ $(0;2,2,2,3,3),$ $(0;3,3,5,5)$, $(0;2,2,n,n)$, or $(0;2,2,2,2,n/2)$. Hence, either $H$ is a split metacyclic group or $|H| \leq 4g$. This completes our argument for $\ell = 4.$

Now, for $\ell = 3$, by similar arguments as above, we can conclude that $\Gamma(\O_{\langle \F \rangle}) = (0;n,n,n,n)$ (or $(0;4,4,4,4,4,4)$). By Lemma~\ref{lem:ind_aut} (iii) and Theorem~\ref{thm:main}, we see that $|\G| = |\bar{\G}|$, and so $H$ is a split metacyclic group. 

For the realization of the bound, when $H \cong \Dic_g$ and $g$ is even, we can see that data set 
$$\D = ((2\cdot 2g,g,-1),0;[(1,4),(0,1),4],[(1,4),(1,2g),4],[(0,1),(2g-1,2g),2g])$$ represents the weak conjugacy class of $(H,(\G,\F))$. 
	
\end{proof}

\noindent An immediate consequence of Proposition~\ref{prop:bound} is the following.

\begin{cor}
\label{cor:irr_order}
Suppose that $H < \Mod(S_g)$ is a finite non-split metacyclic group. Then there exists no irreducible periodic mapping class in $H$. 
\end{cor}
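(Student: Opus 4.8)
The plan is to prove Corollary~\ref{cor:irr_order} by contraposition, leveraging the structural dichotomy established in Proposition~\ref{prop:bound}. Suppose $H < \Mod(S_g)$ is a finite non-split metacyclic group, and suppose for contradiction that $H$ contains an irreducible periodic mapping class $F$. The key fact I would invoke is that an irreducible periodic mapping class has a very restricted quotient orbifold: by Nielsen's classification, an irreducible periodic $\F$ generates a $\Z_n$-action whose quotient orbifold $\O_{\langle \F \rangle}$ is a sphere with exactly three cone points, i.e. $\Gamma(\O_{\langle \F \rangle}) = (0; n_1, n_2, n_3)$. This is precisely the condition that forces $\F$ to be a ``maximal'' or rigid type of action, and such a $\Z_n$-action has $|F|$ close to $4g+2$.

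First I would record that an irreducible finite-order element must have genus-zero quotient with exactly three cone points, so $\Gamma(\O_{\langle \F \rangle}) = (0; n_1, n_2, n_3)$. I would then feed this into the machinery of the proof of Proposition~\ref{prop:bound}: since $\langle F \rangle \lhd H$ and $\bar{\G}$ acts on $\O_{\langle \F \rangle}$ as an order-preserving bijection of its three cone points (Lemma~\ref{lem:ind_aut} and Definition~\ref{defn:ind_auto}), the induced action $\bar{\G}$ on the three-punctured sphere is highly constrained. In the analysis carried out in Proposition~\ref{prop:bound}, the signatures $(0; n, n, n, n)$ and their ilk arose precisely because $\bar\G$ permutes cone points of equal order; here the base has only three cone points, so $\bar{\G}$ either fixes them or permutes them in a way that, combined with order-preservation (Definition~\ref{defn:ind_auto}(i)), severely limits $|\bar{\G}|$. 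By Lemma~\ref{lem:ind_aut}(iii), the group $H$ is non-split exactly when $|\bar{\G}| < |\G|$, i.e. $\F^r = \G^u$ for some $0 < r < n$; so the contradiction I aim for is to show that the three-cone-point structure of $\O_{\langle \F \rangle}$ forces $|\bar{\G}| = |\G|$, making $H$ split.

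The cleanest route is to show directly that $\bar{\G}$ cannot identify two distinct cone points without violating the data-set constraints of Definition~\ref{defn:data_set} or Theorem~\ref{thm:main}. Since $\bar\G$ preserves cone-point orders and $\O_{\langle\F\rangle}$ has genus zero with three cone points, the quotient $\O_{\langle\F\rangle}/\langle\bar\G\rangle = S_g/H$ must itself satisfy the Riemann-Hurwitz constraint of Theorem~\ref{Harvey Condition}; but a genus-zero three-cone-point orbifold admits only finitely many orientation-preserving automorphism types, and in each the induced extension splits. Concretely, I would check case-by-case on the permutation type of $\bar\G$ on $\{n_1,n_2,n_3\}$ (trivial, a transposition of two equal-order points, or a $3$-cycle of three equal-order points) and show using Lemma~\ref{lem:prime} and the splitting criterion of Corollary~\ref{cor:free} that each case yields a split $H$, contradicting non-splitness.

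The main obstacle will be handling the case where $\bar{\G}$ acts nontrivially on the cone points — particularly a $3$-cycle on three cone points of equal order $n_1 = n_2 = n_3$ — since there the quotient orbifold has a single cone point of order $3n_1$ together with a genus-zero base, and one must rule out the emergence of a nontrivial amalgam $r < n$. I expect to dispatch this by applying Theorem~\ref{thm:main} to show that the twist factor $k$ and amalgam forced by such a configuration necessarily satisfy $r = n$, so that $H \cong \Z_n \rtimes_k \Z_u$ is split. Combining all cases, no irreducible periodic mapping class can lie in a non-split metacyclic $H$, completing the proof.
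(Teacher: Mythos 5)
Your plan stalls at its very first step: you silently assume that the irreducible element $F$ generates the distinguished normal cyclic subgroup of $H$, so that $\langle \F \rangle \lhd H$ and a complementary generator $\G$ induces $\bar{\G} \in \Aut(\O_{\langle \F \rangle})$. But the corollary quantifies over \emph{all} periodic elements of $H$: for an arbitrary $F \in H$ the subgroup $\langle F \rangle$ need not be normal (the paper's introductory corollary adds ``$\langle F \rangle \lhd H$'' as a separate hypothesis in its refined part precisely because this is extra information), and without normality there is no induced automorphism of $\O_{\langle \F \rangle}$, so your case analysis on how $\bar{\G}$ permutes the three cone points never gets started. Two of your supporting tools are also misapplied. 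Corollary~\ref{cor:free} is a splitting criterion for \emph{free} cyclic deck actions, whereas your $\F$ has three cone points. And in the $3$-cycle case your quotient computation is wrong: the quotient of $(0;n_1,n_1,n_1)$ by an order-$3$ rotation is $(0;3,3,n_1)$ --- the two fixed points of the rotation become order-$3$ cone points --- not a sphere with a single cone point of order $3n_1$, which is not even a good orbifold. Ironically, the correct computation is friendlier to you than the wrong one: with $g_0 = 0$, $\ell = 3$ and a cone point of \emph{prime} order, any two of the three surface-kernel generators generate $H$ (by the long relation), so Lemma~\ref{lem:prime} yields splitness at once, and the transposition case works the same way since its quotient signature $(0;n_1,2n_3,2)$ contains an order-$2$ cone point; this is exactly how the paper disposes of the $\ell = 3$ signatures inside the proof of Proposition~\ref{prop:bound}. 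A further slip: ``$H$ is non-split exactly when $|\bar{\G}| < |\G|$'' is false as a biconditional --- Lemma~\ref{lem:ind_aut}(iii) detects a relation $\F^r = \G^u$ for the \emph{chosen} generating pair, which can occur in a split group with badly chosen generators (e.g.\ $\Z_4 \times \Z_2 \cong \M(2,4,2,1)$); only the direction $|\bar{\G}| = |\G| \Rightarrow H$ split is available to you, which is the direction you use, but you should not present the equivalence as the logical frame.

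The paper's own proof is a two-line counting argument that sidesteps all of this: by Proposition~\ref{prop:bound}, $|H| \leq 4g$; a non-split metacyclic group is in particular non-cyclic, so every $F \in H$ generates a proper subgroup and hence $|F| \leq |H|/2 \leq 2g$; and an irreducible periodic mapping class has order at least $2g+1$ (equivalently, its quotient orbifold is a sphere with three cone points, the fact you start from). Note that this argument applies uniformly to every element of $H$, which is precisely what your normality assumption fails to deliver. To salvage your route you would have to (a) treat elements outside the normal cyclic subgroup separately --- at which point the order bound does the job anyway --- and (b) replace the appeal to Corollary~\ref{cor:free} by the corrected quotient-signature computations plus Lemma~\ref{lem:prime}; after those repairs you would essentially have re-proved the $\ell = 3$ step of Proposition~\ref{prop:bound} rather than derived the corollary from it.
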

\begin{proof}
Since $|H| \leq 4g$ and $H$ is non-split, we have $|F| \leq 2g$ for any $F \in H$. Our assertion now follows from the fact that the order of any irreducible periodic mapping class is at least $2g+1$.
\end{proof}

\noindent Corollary~\ref{cor:irr_order} further yields the following.

\begin{cor}
Suppose that $H = \langle F, G \rangle < \Mod(S_g)$ is a finite non-split metacyclic group. 
\begin{enumerate}[(i)]
\item If $g=2$, then $|F| \leq 2g$, $|G| \leq 2g$, and $|\bar{G}| \leq g$. Moreover, these bounds are realized when $H \cong Q_8$.
\item If  $g > 2$, then $|F| \leq 2g$, $|G| \leq 2g-2$, and $|\bar{G}| \leq g-1$. Moreover, the bound on $|F|$ is realized when $H \cong \mathrm{Dic}_{g}$, where $g$ is even, while the bounds on $|G|$ and $|\bar{G}|$ are realized when $H \cong Q_8 \times \Z_{\frac{g-1}{2}}$, where $g \equiv 3 \pmod{4}$.
\end{enumerate}
\end{cor}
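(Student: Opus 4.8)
The plan is to read all three bounds off the two facts already in hand---the order bound $|H|\le 4g$ of Proposition~\ref{prop:bound} together with its realization analysis, and Corollary~\ref{cor:irr_order}, which says $H$ contains no irreducible periodic element---and then to sharpen two borderline cases when $g>2$. Throughout I write $n=|F|$, $m=|G|$, $u=|\bar G|$, and I record the structural identities $|H|=un$, $m=un/r$ and $u=|H|/n$ coming from the presentation $G^{u}=F^{r},\ F^{n}=1,\ G^{-1}FG=F^{k}$ together with Remark~\ref{rem:g_bar} (which gives $|\bar G|=u$).

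First I would extract the coarse bounds. Since $H$ is non-split we must have $r\ge 2$ (if $r=1$ then $F\in\langle G\rangle$ and $H$ is cyclic, hence split) and $r<n$ (if $r=n$ then $H\cong\Z_{n}\rtimes_{k}\Z_{u}$ is split); as $r\mid n$ this forces $n\ge 4$. Consequently $u=|H|/n\le 4g/4=g$, which is the bound on $|\bar G|$ in part (i). Moreover, an irreducible periodic class has order $\ge 2g+1$ while $H$ has none, and any element of order $>2g$ dividing $|H|\le 4g$ would have to generate $H$ (forcing $H$ cyclic, hence split); so every element of $H$, in particular $F$ and $G$, has order $\le 2g$. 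This already gives $|F|,|G|\le 2g$ and settles part (i), where in fact $H\cong Q_{8}$ is the only non-split metacyclic group of order $\le 8$, realizing $|F|=|G|=4=2g$ and $|\bar G|=2=g$ through the action with $\Gamma(\O_{H})=(0;4,4,4)$.

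For $g>2$ the task is to exclude the top values $u=g$, $m=2g$ and $m=2g-1$. The case $m=2g-1$ is purely group-theoretic: then $|H|=mr\le 4g$ forces $r=2$ and $|H|=2(2g-1)$, so $\langle G\rangle$ has odd order $2g-1$ and index $2$, hence is normal, and Schur--Zassenhaus splits $1\to\langle G\rangle\to H\to\Z_{2}\to 1$, contradicting non-splitness. The remaining cases both force $|H|=4g$ ($m=2g$ gives $r=2$, while $u=g$ gives $n=4$), and here I would invoke the realization part of Proposition~\ref{prop:bound}, namely that the only non-split metacyclic group of order $4g$ acting on $S_{g}$ is $\Dic_{g}$ (with $g$ even). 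It then suffices to observe that for $g>2$ the group $\Dic_{g}$ admits no metacyclic presentation with $n=4$ or $m=2g$: its only cyclic normal subgroups with cyclic quotient are its index-two subgroup $C\cong\Z_{2g}$ (giving $n=2g$) and, when $g$ is odd, the order-$g$ subgroup of $C$ (quotient $\Z_{4}$), neither of order $4$; and any element of order $2g$ generates $C$, whose image has index $2$ in every cyclic quotient of $H$ and so cannot serve as a generator $G$. This yields $|\bar G|\le g-1$ and $|G|\le 2g-2$. I expect this borderline analysis to be the main obstacle, since it cannot be settled by counting alone: one must rule out genuinely non-split groups of order $4g$ distinct from $\Dic_{g}$---for instance $\M(2,2g,g,g+1)$ with $g\equiv 2\pmod 4$, which is abstractly non-split and which, read through its alternative presentation $\M(g,4,2,-1)$, would otherwise produce $m=2g$ and $u=g$---by showing they do not act on $S_{g}$, and this leans essentially on the uniqueness extracted from Proposition~\ref{prop:bound} and on the subgroup structure of $\Dic_{g}$.

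Finally I would exhibit the realizing actions by writing down explicit metacyclic data sets and reading off $(|F|,|G|,|\bar G|)$ via Proposition~\ref{prop:main} and Remark~\ref{rem:g_bar}. For the bound on $|F|$ I reuse the data set for $\Dic_{g}$ ($g$ even) from Proposition~\ref{prop:bound}, for which $n=2g$. For the simultaneous realization of $|G|=2g-2$ and $|\bar G|=g-1$ I take $H\cong Q_{8}\times\Z_{(g-1)/2}$ with $g\equiv 3\pmod 4$, presented with $F$ an order-$4$ generator of the $\Z_{4}<Q_{8}$ (so $n=4$, $r=2$, $k=-1$) and $G$ the product of the complementary $Q_{8}$-generator with a generator of $\Z_{(g-1)/2}$; since $(g-1)/2$ is odd this gives $u=g-1$ and $m=un/r=2(g-1)$, and a short Riemann--Hurwitz computation (e.g.\ signature $(0;2,2,4,4)$) confirms the action on $S_{g}$. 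Checking that these data sets satisfy the conditions of Definition~\ref{defn:meta_cyc_dataset} is then routine.
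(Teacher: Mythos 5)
Your coarse bounds and your disposal of $|G|=2g-1$ are sound (the Schur--Zassenhaus argument is correct, and in fact more explicit than anything the paper offers), but the crux of part (ii) --- excluding $|G|=2g$ and $|\bar G|=g$ --- rests on a statement that does not exist. You invoke ``the realization part of Proposition~\ref{prop:bound}, namely that the only non-split metacyclic group of order $4g$ acting on $S_g$ is $\Dic_g$.'' Proposition~\ref{prop:bound} proves no such uniqueness: its proof analyzes exclusively the range $|H|>4g$ (every branch concludes ``either $H$ is split or $|H|\le 4g$''), and its realization clause merely exhibits one order-$4g$ action; the boundary case $|H|=4g$ is never classified. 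You correctly isolate the dangerous family --- $\M(g,4,2,-1)\cong Q_8\times\Z_{g/2}$ for $g\equiv 2\pmod 4$, which has $m=2g$ and $u=g$ (and is indeed the only abstract candidate once $u=g$ forces $n=4$, $r=2$, $k=-1$) --- but then defer its elimination to the nonexistent uniqueness statement. What is actually needed is a separate Harvey-type signature analysis at order exactly $4g$ showing these groups admit no surface-kernel epimorphism on $S_g$ (e.g.\ for $g=6$, one checks by hand that $Q_8\times\Z_3$ realizes none of the admissible signatures, since its unique involution and all its order-$6$ elements are central). Neither your argument nor your group-theoretic analysis of $\Dic_g$'s subgroups supplies this, so the sharpened bounds of (ii) remain unproved. (The paper's own proof is admittedly terse here, asserting the exclusion is ``apparent'' from Proposition~\ref{prop:bound}'s proof, but it at least does not manufacture a uniqueness claim.)

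Your final realization check also contains a concrete error: the signature $(0;2,2,4,4)$ you propose for $H\cong Q_8\times\Z_{(g-1)/2}$ cannot carry a surface-kernel epimorphism. In this group the unique involution is $z=F^2$, it is central, and every order-$4$ element squares to $z$; hence the long relation forces $\xi_1,\xi_2\mapsto z$ and $\phi(\xi_4)=\phi(\xi_3)^{-1}$, so the image is the cyclic group $\langle\phi(\xi_3)\rangle\ne H$. A Riemann--Hurwitz computation (Harvey's condition (i)) never by itself ``confirms the action''; condition (ii) fails here. The paper instead realizes this action on a genus-one quotient orbifold, via the data set $(((g-1)\cdot 4,2,-1),1;[(0,1),(1,2),2])$ --- signature $(1;2)$ --- where surjectivity comes for free from the hyperbolic generators ($\alpha_1\mapsto G$, $\beta_1\mapsto F$, and $\xi_1\mapsto [G,F]^{-1}=F^2$, which has the required order $2$). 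So even granting the bounds, your realization step must be replaced by the paper's genus-one data set or an equivalent one.
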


\begin{proof}
From Corollary~\ref{cor:irr_order}, we have that $|F|,|G| \leq 2g$. Also, as $|\bar{G}| < |G|$, from Lemma~\ref{lem:ind_aut}, we have $|\bar{G}| \leq g$. Hence, the assertion in (i) follows immediately from Proposition~\ref{prop:bound}. 
	
Furthermore, by Proposition~\ref{prop:bound}, the bound $4g$ on $|H|$ is realized when $H \cong \Dic_g$ and $g$ is even. It is apparent that $|F| = 2g$ in $H$, which realizes the required bound in the first part of (ii). Moreover, from the proof of Proposition~\ref{prop:bound} it is apparent that $\Dic_g$ will not realize the bounds for $|G|$ and $|\bar{G}|$. However, it can be easily seen that the bounds on $|G|$ and $|\bar{G}|$ are realized when $H \cong Q_8 \times \Z_{\frac{g-1}{2}}$ with the weak conjugacy class $(H,(G,F))$ represented by the metacyclic data set 
$$(((g-1)\cdot 4,2,-1),1;[(0,1),(1,2),2]).$$
\end{proof}

\noindent We conclude this subsection with the following direct consequence of Corollary~\ref{cor:irr_order}.

\begin{cor}
	Let $p:S_g \rightarrow S_{0,3}$ be a finite n-sheeted cover with deck transformation group $\langle \F \rangle \cong \Z_n$. If $\bar{G} \in \LMod_p(S_{0,3})$ lifts to a $G \in \SMod_p(S_g)$. Then $H = \langle F, G \rangle$ is a split metacyclic group. Furthermore, either $H \cong \Z_n \rtimes_{k} \Z_2$ or $H \cong \Z_n \rtimes_{k} \Z_3$.
\end{cor}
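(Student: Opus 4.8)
Looking at this final corollary, I need to prove that for a cover $p: S_g \to S_{0,3}$ with deck group $\langle \mathcal{F}\rangle \cong \mathbb{Z}_n$, if $\bar{G} \in \mathrm{LMod}_p(S_{0,3})$ lifts to $G$, then $H = \langle F, G\rangle$ is split metacyclic, and moreover $H \cong \mathbb{Z}_n \rtimes_k \mathbb{Z}_2$ or $\mathbb{Z}_n \rtimes_k \mathbb{Z}_3$.

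Let me think about the structure here. The base orbifold is $S_{0,3}$, a sphere with 3 marked points, and the lifting $\bar{G}$ is a finite-order mapping class. The key facts are the covering structure and the constraint that $\mathcal{O}_{\langle\bar{\mathcal{G}}\rangle}$ is covered by $S_{0,3}$.

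\begin{proof}
The plan is to exploit the rigidity of the three-punctured sphere as the base of the cover, combined with the genus-zero case of Corollary~\ref{cor:free}. Since $\bar{G} \in \LMod_p(S_{0,3})$ has finite order and fixes (setwise) the branch set of $p$, it induces a finite-order automorphism $\bar{\mathcal{G}}$ whose quotient orbifold $\mathcal{O}_{\langle\bar{\mathcal{G}}\rangle}$ is obtained as a quotient of $S_{0,3}$. Because any orientation-preserving finite-order homeomorphism of the thrice-punctured sphere must, up to isotopy, either fix all three marked points or permute them cyclically (there is no room for a more complicated permutation action compatible with the orbifold structure), $|\bar{G}| \in \{2,3\}$ and $\mathcal{O}_{\langle\bar{\mathcal{G}}\rangle}$ again has underlying genus zero. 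The first step is therefore to record that $u := |\bar{\mathcal{G}}| \in \{2,3\}$ and that the quotient orbifold $\mathcal{O}_H = S_g/H$ has genus $g_0 = 0$.

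Next I would invoke Corollary~\ref{cor:free}: since $S_{0,3}$ has underlying genus zero and $\mathcal{O}_{\langle\bar{\mathcal{G}}\rangle}$ has genus zero, the quotient orbifold $\mathcal{O}_H$ has genus zero, and Corollary~\ref{cor:free} then forces $H = \langle F, G\rangle$ to be a split metacyclic group. (Here I use that the hypothesis of the corollary—$\mathcal{O}_{\langle\bar{\mathcal{G}}\rangle} \approx S_0$ with the lift $G \in \SMod_p$—is exactly what the three-punctured-sphere base supplies.) This immediately gives the first assertion. For the refinement, the split metacyclic group is $H \cong \mathbb{Z}_n \rtimes_k \mathbb{Z}_u$ where $u = |\bar{\mathcal{G}}| \in \{2,3\}$ by the first step, which is precisely the two stated isomorphism types.

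The main obstacle I expect is the careful justification of the first step, namely pinning down that $|\bar{\mathcal{G}}| \in \{2,3\}$ and that the induced quotient has genus zero. This requires analyzing the finite-order action of $\bar{\mathcal{G}}$ on $S_{0,3}$ via the data-set formalism of Lemma~\ref{prop:ds-action}: a finite cyclic action on the sphere with a $\bar{\mathcal{G}}$-invariant set of three cone points is highly constrained, and one must verify that the only possibilities with $\bar{\mathcal{G}} \neq \mathrm{id}$ permuting or fixing the three marked points compatibly yield order $2$ (a reflection-type swap of two marked points fixing the third, realized by a rotation) or order $3$ (a cyclic permutation of all three). In both cases the quotient is a genus-zero orbifold, so the genus-zero hypothesis of Corollary~\ref{cor:free} is met. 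Once this local analysis is complete, the remainder follows formally from the already-established corollaries.
\end{proof}
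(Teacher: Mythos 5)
Your first step (pinning down $|\bar{G}|\in\{2,3\}$ from the finite-order mapping classes of the thrice-marked sphere, $\Mod(S_{0,3})\cong\Sigma_3$) is sound and is exactly what the ``furthermore'' clause needs. The genuine gap is in your splitness step: you invoke Corollary~\ref{cor:free} in a situation where its essential hypothesis fails. That corollary is about an \emph{unbranched} regular cyclic cover $S_{n(g-1)+1}\to S_g$, where the deck group $\langle\F\rangle$ acts freely, and its proof uses freeness at the very first move, deducing $\langle \G^{\gamma_i}\F^{\delta_i}\rangle\cap\langle\F\rangle=\{1\}$ from the absence of fixed points. Here $p:S_g\to S_{0,3}$ is necessarily branched: a free $\Z_n$-action never has genus-zero quotient (it would force $g=1-n<0$), so $\F$ has fixed points and the whole mechanism of Corollary~\ref{cor:free} is unavailable. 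Worse, the weaker implication you actually rely on --- ``the relevant quotients have genus zero, hence $H$ is split'' --- is false in general: the paper's own Table~\ref{tab:meta_dsets2} lists non-split $\M(4,8,4,-1)$-subgroups of $\Mod(S_{11})$ whose metacyclic data sets have $g_0=0$, and the classical non-split $Q_8$-action on $S_2$ with $\Gamma(\O_H)=(0;4,4,4)$ is another counterexample. So genus-zero data alone can never yield splitness.

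The paper reaches splitness by a different and shorter route, as a direct consequence of Corollary~\ref{cor:irr_order}. Since the base of the cover is $S_{0,3}$, the quotient orbifold $\O_{\langle\F\rangle}$ is a sphere with exactly three cone points, which is precisely the criterion for the periodic mapping class $F$ to be \emph{irreducible}. Corollary~\ref{cor:irr_order} (resting on the bound $|H|\leq 4g$ of Proposition~\ref{prop:bound}, together with the fact that irreducible periodic classes have order at least $2g+1$) says a finite non-split metacyclic subgroup of $\Mod(S_g)$ contains no irreducible periodic element; hence $H=\langle F,G\rangle$ must be split. With splitness in hand, Lemma~\ref{lem:ind_aut}(iii) lets you take the complementary generator $G$ with $|G|=|\bar{G}|\in\{2,3\}$, which is your first step, and this gives $H\cong\Z_n\rtimes_k\Z_2$ or $H\cong\Z_n\rtimes_k\Z_3$. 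So the fix is to replace your middle step by the irreducibility argument through Corollary~\ref{cor:irr_order}, keeping your analysis of $\bar{G}$ for the final dichotomy.
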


\subsection{Dicyclic subgroups of $\Mod(S_g)$}
\label{subsec:dicyc}
In Proposition~\ref{prop:bound}, we saw the significance of dicyclic groups as bound-realizing metacyclic subgroups of $\Mod(S_g)$. This motivates a separate analysis of dicyclic actions, which is precisely what we undertake in this subsection. We recall that a dicyclic group of order $4n$ is given by $\Dic_{n} := \M(2,2n,n,-1)$. We will call a metacyclic data set of degree $2\cdot 2n$, amalgam $n$ and twist factor $-1$, a \textit{dicyclic data set}. Note that a dicyclic group is a non-split metacyclic group if and only if $n$ is even. Thus, throughout this subsection, $n$ will be assumed to be even. The following is an immediate consequence of Proposition~\ref{prop:main}. 

\begin{cor}
For $g \geq 2$ and $n \geq 3$, dicyclic data sets of degree $2 \cdot 2n$ and genus $g$ correspond to the weak conjugacy classes of $\Dic_{n}$-actions on $S_g$.
\end{cor}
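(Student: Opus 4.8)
The plan is to derive this corollary directly from Proposition~\ref{prop:main} by specializing the parameters. Recall that Proposition~\ref{prop:main} asserts a bijective correspondence between metacyclic data sets of degree $u \cdot n$ with twist factor $k$, amalgam $r$, and genus $g$, and the weak conjugacy classes of $\M(u,n,r,k)$-actions on $S_g$. Since $\Dic_n = \M(2,2n,n,-1)$ by definition, the corollary should follow by substituting $u = 2$, the ambient cyclic order as $2n$ (so that the ``$n$'' appearing in the general metacyclic setup is replaced by $2n$), amalgam $r = n$, and twist factor $k = -1$. Under these substitutions, a ``metacyclic data set of degree $2 \cdot 2n$, amalgam $n$, twist factor $-1$, and genus $g$'' is exactly what we have named a \emph{dicyclic data set}, so the statement is the restriction of Proposition~\ref{prop:main} to this parameter slice.

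First I would verify that the parameter constraints required in Proposition~\ref{prop:main} are met by the dicyclic specialization. The hypotheses there demand integers $n, u, g, r \geq 2$ with $r \mid n$ and $k \in \Z_n^{\times}$. In our case, taking the ambient orders as above, we need $u = 2 \geq 2$ (fine), the amalgam $n \geq 2$ (which holds since $n \geq 3$ by hypothesis), the divisibility $n \mid 2n$ (automatic), and $-1 \in \Z_{2n}^{\times}$ with $(-1)^2 \equiv 1 \pmod{2n}$ (automatic). The genus hypothesis $g \geq 2$ matches the corollary's assumption. Thus the data-set correspondence of Proposition~\ref{prop:main} applies verbatim to this family.

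Next I would simply state that, by definition, a dicyclic data set of degree $2 \cdot 2n$ and genus $g$ \emph{is} a metacyclic data set with the indicated parameters, and conversely any metacyclic data set with $u = 2$, amalgam $n$, twist factor $-1$, ambient order $2n$, and genus $g$ is a dicyclic data set. Hence the correspondence of Proposition~\ref{prop:main} restricts to a bijection between dicyclic data sets of degree $2 \cdot 2n$ and genus $g$ and weak conjugacy classes of $\Dic_n$-actions on $S_g$, which is the desired claim.

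I do not anticipate any genuine obstacle here, since this is purely a matter of recognizing the dicyclic case as a specialization of the master proposition; the entire content has already been established in Proposition~\ref{prop:main}. The only point requiring a moment's care is checking that the parameter ranges in the corollary ($n \geq 3$, $g \geq 2$) are compatible with the hypotheses of Proposition~\ref{prop:main}, and that the twist factor $-1$ genuinely lies in $\Z_{2n}^{\times}$ and satisfies the order-two relation $(-1)^2 \equiv 1$; both are immediate. Accordingly, a one-line proof invoking Proposition~\ref{prop:main} suffices.
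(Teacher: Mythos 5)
Your proposal is correct and matches the paper exactly: the paper states this corollary as ``an immediate consequence of Proposition~\ref{prop:main},'' which is precisely your specialization $u = 2$, ambient cyclic order $2n$, amalgam $r = n$, twist factor $k = -1$, under which a metacyclic data set of degree $2 \cdot 2n$ is by definition a dicyclic data set. Your extra verification that the parameter constraints of Proposition~\ref{prop:main} hold ($n \mid 2n$, $-1 \in \Z_{2n}^{\times}$ with $(-1)^2 \equiv 1 \pmod{2n}$, and $g, u, r \geq 2$) is sound and slightly more careful than the paper, which leaves these checks implicit.
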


\begin{rem}
	\label{rem:G_fix_point}
Let $H = \Dic_n = \langle F,G \rangle < \Mod(S_g)$. Then $\bar{\G}$ cannot fix a regular point in orbifold $S_g/ \langle \F \rangle$. To see this, suppose we assume on the contrary that $\bar{G}([x]) = [x],$ where $[x]$ is a regular point in $\O_{\langle \F \rangle}.$ Then $\mathrm{Stab}_{\langle G,F \rangle}(x) = \langle G^2 \rangle = \langle F^n \rangle$, which implies that $[x]$ is an order 2 cone point in $\O_{\langle \F \rangle}$, thereby yielding a contradiction. 
\end{rem}

\noindent The following proposition provides an alternative characterization of a $\Dic_{n}$-action on $S_g$ .

\begin{prop}
	\label{prop:dic}
	Let $F \in \Mod(S_g)$ be of order $2n$. Then there exists a $G \in \Mod(S_g)$ of order $4$ such that $\langle F, G \rangle \cong \Dic_{n}$ if and only if $D_F$ has the form 
	\[(2n,g_0,d; ((c_1,n_1),(-c_1,n_1), \ldots, (c_{s},n_{s}),(-c_{s},n_{s})) \tag{*}\] satisfying the following conditions.
	\begin{enumerate}[(i)]
		\item When $g_0$ is even, there exists an $i$ such that $(c_i,n_i) = (-c_i,n_i)= (1,2).$
		\item When $g_0$ is odd, at least one of the following statements hold true. 
		\begin{enumerate}
			\item There exists $i,j$ with $i \neq j$ such that  $(\pm c_i, n_i) = (\pm c_j,n_j) = (1,2)$.
			\item $g_0 \geq 3$ and $	\displaystyle \sum_{i=1}^{s} c_{i} \frac{2n}{n_{i}} \equiv 2a \pmod{2n}$ for some $a \in \Z$.
			\item $g_0 =1$ and $	\displaystyle \sum_{i=1}^{s} c_{i} \frac{2n}{n_{i}} \equiv 2 \pmod{2n}$.
			\item $g_0 =1$, $\displaystyle \sum_{i=1}^{s} c_{i} \frac{2n}{n_{i}} \equiv 2a \pmod{2n}$ for some $a \in \Z$, and $\lcm(n_1, \hdots ,n_s) = 2n$.
		\end{enumerate}		
		
 	\end{enumerate}
\end{prop}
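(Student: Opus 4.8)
The plan is to derive the statement from the main theorem (Theorem~\ref{thm:main}): since $\Dic_n=\M(2,2n,n,-1)$, a $G$ of order $4$ with $\langle F,G\rangle\cong\Dic_n$ exists if and only if $D_F$ occurs as the first coordinate $\D_1$ of a dicyclic data set of degree $2\cdot 2n$, amalgam $n$ and twist factor $-1$; here $u=2$, $|F|=2n$, $r=n$, and $m=un/r=4$. First I would extract the shape~(*). If such a $G$ exists then $\langle F\rangle\lhd\langle F,G\rangle$ and, by Lemma~\ref{lem:ind_aut}, $G$ induces $\bar G\in\Aut(\O_{\langle F\rangle})$ with $\bar G^2=\mathrm{id}$ (as $G^2=F^n\in\langle F\rangle$ and $k=-1\neq 1$). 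Definition~\ref{defn:ind_auto} with $k=-1$ forces $\bar G$ to send $(c_x,n_x)$ to $(c_y,n_y)$ with $n_x=n_y$ and $c_x=-c_y$, so the cone points of $\O_{\langle F\rangle}$ occur in $\bar G$-orbits $\{(c_i,n_i),(-c_i,n_i)\}$, giving (*); a cone point is $\bar G$-fixed exactly when $2c_i\equiv 0\pmod{n_i}$, which with $\gcd(c_i,n_i)=1$ forces $(c_i,n_i)=(1,2)$, and by Remark~\ref{rem:G_fix_point} these are the only fixed points of $\bar G$.

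The decisive computation is the effect of fixed versus swapped $(1,2)$ cone points on the long relation of $\phi_H$. Writing $g_H$ for the genus of $\O_H=\O_{\langle F\rangle}/\langle\bar G\rangle$ and $f$ for the number of $\bar G$-fixed $(1,2)$ cone points, Riemann--Hurwitz for $S_{g_0}\to S_{g_H}$ gives $f=2+2g_0-4g_H$, so $f\equiv 2+2g_0\pmod 4$; hence $f\geq 2$ and $f\equiv 2\bmod 4$ when $g_0$ is even, while $f\equiv 0\pmod 4$ when $g_0$ is odd. A swapped pair of $(1,2)$'s yields a single order-two cone point of $\O_H$ whose image is the central involution $F^n$ (rigid), whereas each $\bar G$-fixed $(1,2)$ cone point yields an order-four cone point whose image has the form $F^{b}G$; since $(F^{b}G)(F^{b'}G)=F^{b-b'+n}$, any two fixed $(1,2)$ images realize an arbitrary power of $F$ in the cone product. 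Thus enough fixed $(1,2)$ points make the long relation solvable for every value of the $F$-data, while in their absence all cone images lie in $\langle F\rangle$ and the long relation imposes a congruence.

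I would then split into cases. When $g_0$ is even, $f\geq 2$ lets us fix two $(1,2)$ points; the resulting flexibility makes $\phi_H$ surjective and the long relation solvable, so the single requirement is the existence of one $(1,2)$ pair, which is condition~(i). When $g_0$ is odd, $f\in\{0,4,8,\dots\}$, so two pairs of $(1,2)$'s (four cone points) can be fixed, again giving full flexibility; this is condition~(ii)(a). Otherwise $\bar G$ must be fixed-point free ($f=0$, $g_H=(g_0+1)/2$), every cone image lies in $\langle F\rangle$, and since $[\Dic_n,\Dic_n]=\langle F^2\rangle$ with $[G,F^t]=F^{-2t}$, the long relation closes up precisely when $\sum_{i=1}^{s}c_i\frac{2n}{n_i}$ is even, i.e. $\equiv 2a\pmod{2n}$. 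For $g_0\geq 3$ we have $g_H\geq 2$, so surjectivity is automatic and this parity condition alone suffices, giving~(ii)(b); for $g_0=1$ we have $g_H=1$, and the single commutator together with the need to generate $\langle F\rangle$ yields the two equations of Definition~\ref{defn:meta_cyc_dataset}(vi), which become (ii)(c) and the $\lcm(n_1,\dots,n_s)=2n$ alternative~(ii)(d). For each case I would exhibit an explicit dicyclic data set $\D$ with $\D_1=D_F$ and verify conditions (i)--(vi) of Definition~\ref{defn:meta_cyc_dataset}, so that Theorem~\ref{thm:main} supplies $G$.

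The main obstacle is the fixed-point-free case underlying (ii)(b)--(d): here one must certify both surjectivity of $\phi_H$ and the closing of the long relation entirely from the genus data, with no order-four cone points to absorb the $F$-content. The genus-one subcase is the most delicate, since the lone commutator contributes only even powers of $F$, so distinguishing when the cone-point images already generate $\langle F\rangle$ (the role of $\lcm(n_1,\dots,n_s)=2n$ in (ii)(d)) from when a hyperbolic generator must be spent on that task (forcing the sharper value in (ii)(c)) requires the careful bookkeeping that the two equations of Definition~\ref{defn:meta_cyc_dataset}(vi) are built to handle.
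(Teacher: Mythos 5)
Your proposal is correct and takes essentially the same route as the paper's proof: both reduce the statement via Proposition~\ref{prop:main}/Theorem~\ref{thm:main} to exhibiting an explicit dicyclic data set with $\D_1 = D_F$, both derive the paired form (*) from the induced involution $\bar{G}$ on $\O_{\langle \F \rangle}$ (with fixed cone points forced to be of type $(1,2)$, and regular fixed points excluded by Remark~\ref{rem:G_fix_point}), and both organize the sufficiency constructions into the same three cases according to whether $\bar{G}$ fixes two, four, or no cone points, with the $g_0=1$ fixed-point-free subcase handled through condition (vi) of Definition~\ref{defn:meta_cyc_dataset}. Your Riemann--Hurwitz count $f = 2+2g_0-4g_H$ simply makes explicit the parity bookkeeping implicit in the paper's choice of quotient genera $g_0/2$ and $(g_0+1)/2$ and in its one-line appeal to Remarks~\ref{rem:ext_weak_conj} and~\ref{rem:G_fix_point} for the converse.
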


\begin{proof}
	Suppose that $D_F$ has the form (*). Then $\O_{\langle \F \rangle}$ is an orbifold of genus $g_0$ with $2s$ cone points $[x_1],[y_1],\ldots, [x_s],[y_s]$, where $\P_{x_i} = (c_i,n_i)$ and 
	$\P_{y_i} = (-c_i,n_i)$, for $1 \leq i \leq s$. If $D_F$ satisfies condition (i), then we may assume without loss of generality that $(c_1,n_1)=(-c_1,n_1)=(1,2)$. Then up to conjugacy, let $\bar{\G} \in \Aut( \O_{\langle \F \rangle})$ be an involution such that $\bar{\G}([x_i])= [y_i]$, for $2 \leq i \leq s$, $\bar{\G}([x_1]) = [x_1]$, and $\bar{\G}([y_1]) = [y_1]$. To prove our assertion, it would suffice to show the existence of an involution $\G \in \Homeo^+(S_g)$ that induces $\bar{\G}$. This amounts to showing that there exists a metacyclic data set $\D$ of degree $2 \cdot 2n$ with amalgam $n$ and twist factor $-1$ encoding the weak conjugacy class $(H,(\G,\F))$ so that $D_G$ has degree $4$. 
	
	Consider the tuple
	\begin{gather*} 
	\D = ((2 \cdot 2n,n,-1),g_0/2;[(1,4),(0,1),4], [(3,4),(c',n') ,4],\\  [(0,1),(c_{2},n_{2}),n_2], \hdots , [(0,1),(c_{s},n_{s}),n_s]), 
	\end{gather*}
	where $c' \frac{2n}{n'} \equiv -\sum_{i=2}^{s} c_i \frac{2n}{n_i} \pmod{2n}$.
	\noindent It follows immediately that $\D$ satisfies conditions (i)-(iv) of Definition~\ref{defn:meta_cyc_dataset}.  By taking $v =1$, we may choose $(p_1, \hdots , p_{s+1}) = (1,0, \hdots , 0)$ to conclude that $\D$ also satisfies either condition (v)(a) or (vi)(a), based on the choice of $g_0$. If $g_0 = 0$, we have that $\text{lcm}(n_1, \hdots , n_s) = 2n$, from which condition (v)(b) follows. If $g_0 \neq 0,$ then by carefully defining $\phi_H$ (as in Proposition~\ref{prop:main}) on the hyperbolic elements of $\pi_1^{orb}(\O_H)$, our claim is true. Thus, it follows that $\D$ is a metacyclic data set.
	
	If $D_F$ satisfies condition (ii)(a), then by a similar argument as above, we obtain the metacyclic data set 
	\begin{gather*} 
	\D = ((2 \cdot 2n,n,-1),(g_0+1)/2;[(1,4),(0,1),4],[(1,4),(0,1),4],[(1,4),(1,2n),4], \\
	[(1,4),(c'',n''),4],  [(0,1),(c_{3},n_{3}),n_3], \hdots , [(0,1),(c_{s},n_{s}),n_s]), 
	\end{gather*}
	where $c'' \frac{2n}{n''} \equiv 1-\sum_{i=3}^{s} c_i \frac{2n}{n_i} \pmod{2n}$.
	Suppose that $D_F$ satisfies conditions (ii)(b)-(d). Then again by an analogous argument as above, we obtain the metacyclic data set
	\begin{gather*} 
	\D = ((2 \cdot 2n,n,-1),(g_0+1)/2; [(0,1),(c_{1},n_{1}),n_1], \hdots , [(0,1),(c_{s},n_{s}),n_s]).
	\end{gather*}
	 Further, a direct application of Theorem~\ref{thm:main} would show that $\D$ indeed encodes the weak conjugacy represented by $(H,(\G,\F))$, as desired. 
	
	The converse follows immediately from Remark~\ref{rem:ext_weak_conj}, Remark~\ref{rem:G_fix_point} and Proposition~\ref{prop:main}.
\end{proof}

\subsection{Liftability of non-split metacylic actions under regular cyclic covers}
\label{subsec:liftable}

Considering the fact that every non-split metacyclic group is a quotient of a split metacyclic group, a natural question arises is when a given metacyclic action on $S_g$ factor via a split metacyclic action. In other words, when does a metacyclic action on $S_g$ lift under a regular cover to a split metacyclic action. In the following proposition (which follows directly from Theorem~\ref{thm:main}), we provide an equivalent condition for the liftability of a metacyclic action under a regular cyclic cover. 

\begin{prop}
	\label{prop:split}
	Let $p_{\nu} : S_{\nu(g-1)+1} \rightarrow S_g$ be a regular cyclic cover, and let $H  = \langle F, G \rangle < \Mod(S_g)$ be a finite non-split metacyclic group such that $H \cong \M(u,n,r,k)$ and the weak conjugacy class $(H,(G,F))$ encoded by the data set $$\mathcal{D} = ((u \cdot n,r,k),g_0;[(c_{11},n_{11}),(c_{12},n_{12}),n_1], \ldots , [(c_{\ell 1},n_{\ell 1}),(c_{\ell 2},n_{\ell 2}),n_{\ell} ]).$$ Then $H$ lifts under $p_{\nu}$ to a split metacyclic group $\tilde{H} = \langle \tilde{F}, \tilde{G} \rangle < \Mod(S_{v(g-1)+1})$ such that $\tilde{H} \cong \M(\nu u, n, n, k) \cong \Z_n \rtimes_k \Z_{\nu u}$ if and only if
	\begin{enumerate}[(i)]
	\item $\nu = n/r$ and
	\item the weak conjugacy class $(\tilde{H},(\tilde{G},\tilde{F}))$ is encoded by the data set
	$$\tilde{\D} = ((m \cdot n,n,k),g_0;[(c'_{11},n'_{11}),(c'_{12},n'_{12}),n_1], \ldots , [(c'_{\ell 1},n'_{\ell 1}),(c'_{\ell 2},n'_{\ell 2}),n_{\ell} ]),$$
	where $m = u \frac{n}{r}$,  $c'_{i1}\frac{m}{n'_{i1}} \equiv c_{i1}\frac{m}{n_{i1}} + a_iu \pmod{m}$ and $c'_{i2}\frac{n}{n'_{i2}} \equiv c_{i2}\frac{n}{n_{i2}} - a_ir \pmod{n},$ for some $a_i \in \Z$.
	\end{enumerate}
	\end{prop}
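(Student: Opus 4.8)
The plan is to deduce the statement from Theorem~\ref{thm:main} by analyzing the central extension of $H$ realized by the cover $p_{\nu}$. If $\tilde H = \langle \tilde F, \tilde G\rangle \cong \M(\nu u, n, n, k)$ is a lift of $H$, then the deck group of $p_{\nu}$ is a cyclic subgroup $\langle z\rangle \cong \Z_{\nu}$ of $\tilde H$ with $\tilde H/\langle z\rangle \cong H$, and since the split relation $\tilde G^{\nu u} = \tilde F^n = 1$ must degenerate to the amalgam relation $G^u = F^r$ downstairs, I would first identify $z = \tilde G^u \tilde F^{-r}$. Using $k^u \equiv 1 \pmod n$ together with the consistency relation $r(k-1) \equiv 0 \pmod n$ forced by the presentation (namely $\tilde G^{-1}\tilde F^r\tilde G = \tilde F^r$), one checks that $z$ commutes with both $\tilde F$ and $\tilde G$, so $\langle z\rangle$ is central; and in the split group $\Z_n \rtimes_k \Z_{\nu u}$ one computes $z^j = \tilde G^{ju}\tilde F^{-jr}$, whence $|z| = \lcm(\nu, n/r)$. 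As $\langle z\rangle$ is the degree-$\nu$ deck group, $|z| = \nu$, which already gives $(n/r) \mid \nu$. To upgrade this to the equality in (i), I would invoke surjectivity of the lifted surface-kernel epimorphism: the data-set parameter $m = un/r$ governing the $\tilde G$-exponents must be common to $H$ and to the split lift $\tilde H = \M(\nu u, n, n, k)$, whose own such parameter is $\nu u$; matching these (equivalently, requiring the torsion images, of orders dividing $n$, to still generate the full quotient $\tilde H/\langle\tilde F\rangle \cong \Z_{\nu u}$ rather than a proper subgroup) forces $\nu u = m$, i.e. $\nu = n/r$.

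Because $p_{\nu}$ is unbranched with $\langle z\rangle$ acting freely, the quotient orbifolds coincide, $\O_{\tilde H} = (S_{\nu(g-1)+1}/\langle z\rangle)/H = S_g/H = \O_H$, so $\tilde\D$ inherits the base genus $g_0$ and the cone orders $n_i$ of $\D$, and $\pi_1^{\orb}(\O_{\tilde H}) = \pi_1^{\orb}(\O_H)$. Lifting the $H$-action to $\tilde H$ thus amounts to lifting $\phi_H$ along $\tilde H \twoheadrightarrow H$ to an order-preserving epimorphism $\phi_{\tilde H}$. For each torsion generator $\xi_i$, the admissible values of $\phi_{\tilde H}(\xi_i)$ are exactly the preimages of $\phi_H(\xi_i) = \tilde G^{c_{i1}m/n_{i1}}\tilde F^{c_{i2}n/n_{i2}}$ of order $n_i$; since any two preimages differ by a power of $z$ and $z^{a_i} = \tilde G^{a_i u}\tilde F^{-a_i r}$, this freedom is precisely $\phi_{\tilde H}(\xi_i) = \tilde G^{c_{i1}m/n_{i1} + a_i u}\tilde F^{c_{i2}n/n_{i2} - a_i r}$, which is the content of condition (ii). An order-preserving preimage exists for every $i$ because the branch locus of $S_{\nu(g-1)+1} \to \O_H$ equals that of $S_g \to \O_H$, the cover $p_{\nu}$ being unbranched, so each cone point of order $n_i$ lifts unramified.

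For the converse, given $\nu = n/r$ and the data set $\tilde\D$, I would verify that $\tilde\D$ satisfies all conditions of Definition~\ref{defn:meta_cyc_dataset} for degree $m \cdot n$, amalgam $n$, twist $k$, and genus $\nu(g-1)+1$: condition (i) is Riemann--Hurwitz with the common orbifold $\O_H$, condition (ii) is the order-preservation established above, and conditions (iii)--(vi) follow from those for $\D$ after absorbing the shifts $z^{a_i}$, using centrality of $z$. Proposition~\ref{prop:main} then realizes $\tilde\D$ as an $\M(\nu u, n, n, k) \cong \Z_n \rtimes_k \Z_{\nu u}$-action $\tilde H$, and the free central $\Z_{\nu} = \langle z\rangle$ exhibits $\tilde H$ as a lift of $H$ under $p_{\nu}$. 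I expect the main obstacle to be exactly the verification that twisting each generator by the central deck power $z^{a_i}$ preserves the long-relation conditions (iii)--(iv) and, more delicately, the surjectivity conditions (v)--(vi) of the metacyclic data set, i.e. that the shifted tuple $\tilde\D$ remains in the class of metacyclic data sets; by contrast, the order-preservation and the central-extension bookkeeping that yields $\nu = n/r$ should be comparatively routine.
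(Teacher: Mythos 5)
Your overall architecture is a reasonable expansion of what the paper leaves implicit (the paper offers no proof beyond asserting that the proposition ``follows directly from Theorem~\ref{thm:main}''), and much of your bookkeeping is correct: $z=\tilde G^u\tilde F^{-r}$ lies in the kernel of $\tilde H\to H$; it is central because $k^u\equiv 1\pmod n$ and $r(k-1)\equiv 0\pmod n$ hold as integer congruences forced by $H\cong\M(u,n,r,k)$; $z^j=\tilde G^{ju}\tilde F^{-jr}$, so $|z|=\lcm(\nu,n/r)$, and since $z$ lies in the order-$\nu$ kernel this yields $|z|=\nu$, $\langle z\rangle=\ker$, and $(n/r)\mid\nu$ (your phrasing ``as $\langle z\rangle$ is the deck group, $|z|=\nu$'' is slightly circular but trivially repaired this way). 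Likewise, twisting each preimage of $\phi_H(\xi_i)$ by $z^{a_i}$ and using centrality with $k^{a_iu}\equiv 1\pmod n$ reproduces exactly the congruences of condition (ii).

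The genuine gap is the promotion of $(n/r)\mid\nu$ to $\nu=n/r$, and the mechanism you propose for it fails. First, the cone orders $n_i$ need not divide $n$ (in $\Z_4\rtimes_{-1}\Z_8$ the generator $\tilde G$ already has order exceeding $n$). Second, and more seriously, when $g_0\geq 1$ the surjectivity of $\phi_{\tilde H}$ is carried by the hyperbolic generators (Cases 2 and 3 in the proof of Proposition~\ref{prop:main}), so there is no requirement that the torsion images generate $\tilde H/\langle\tilde F\rangle\cong\Z_{\nu u}$; and ``matching the $m$-parameters'' of $\D$ and $\tilde\D$ presupposes the conclusion. A concrete test: let $H\cong Q_8=\M(2,4,2,-1)$ act on $S_{10}$ with signature $(1;4,4,4)$ (a class from Table~\ref{tab:meta_dsets1}) and take $\nu=4$. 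The abstract central extension $1\to\Z_4\to\Z_4\rtimes_{-1}\Z_8\to Q_8\to 1$ with kernel $\langle \tilde G^2\tilde F^{-2}\rangle$ exists, Riemann--Hurwitz is consistent on $S_{37}$, and surjective order-preserving epimorphisms $\pi_1^{\orb}(\O)\to\Z_4\rtimes_{-1}\Z_8$ exist (send $\alpha_1\mapsto\tilde G$, $\beta_1\mapsto\tilde F$, and the $\xi_i$ into $\langle\tilde F,\tilde G^2\rangle$, which contains every order-$4$ element). What fails is only freeness of $\langle z\rangle$: by Lemma~\ref{lem:riem_surf_fix}, avoiding fixed points of $z^2=\tilde G^4$ forces each $\phi(\xi_i)=\tilde G^{2\alpha_i}\tilde F^{\beta_i}$ to have $\beta_i$ odd, and then the product $\phi(\xi_1)\phi(\xi_2)\phi(\xi_3)$ has odd $\tilde F$-exponent and cannot lie in $[\tilde H,\tilde H]=\{1,\tilde F^2\}$, contradicting the long relation. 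So the necessity of (i) must be extracted from the freeness of the deck action interacting with the data-set conditions, which your forward direction never invokes. Relatedly, in the converse you assert rather than prove that $\langle z\rangle$ acts freely; a proof is available from your own setup --- condition (ii) makes the order of $\phi_{\tilde H}(\xi_i)$ equal to that of its image, so $\langle\phi_{\tilde H}(\xi_i)\rangle\cap\langle z\rangle=1$, stabilizers of points are conjugates of these cyclic groups, and a central $z^j$ lying in a conjugate lies in the group itself --- but it must be said. Your closing assessment is therefore inverted: pushing the central twists through conditions (iii)--(vi) of Definition~\ref{defn:meta_cyc_dataset} is the mechanical part, while condition (i) is where the real content lies.
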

	
\noindent An immediate consequence of Proposition~\ref{prop:split} is the following.
\begin{cor}
\label{cor:lift_meta_groups}
The actions on $S_g$ of the metacyclic groups $\Dic_n$, $\Dic_n \times \Z_m$, and $\Dic_n \times \Z_m \times \Z_p$, where $n$ is even and $m,p$ are odd with $\gcd(p,n)=1$, factor via split metacyclic actions.  
\end{cor}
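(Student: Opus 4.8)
The plan is to apply Proposition~\ref{prop:split} to each of the three families in turn, verifying conditions (i) and (ii) by an explicit choice of lifting parameters. For a dicyclic group $\Dic_n = \M(2,2n,n,-1)$ with $n$ even, the relevant parameters are $u=2$, amalgam $r=n$, modulus $n_{\mathrm{mod}}=2n$, and twist factor $k=-1$; condition (i) then forces the covering degree $\nu = (2n)/n = 2$, so the lifted group should be $\M(2\cdot 2, 2n, 2n, -1) \cong \Z_{2n} \rtimes_{-1} \Z_4$, which is split. The task reduces to showing that the integers $c'_{ij}, n'_{ij}$ prescribed in condition (ii) genuinely define a valid metacyclic data set, i.e.\ that they satisfy all the clauses of Definition~\ref{defn:meta_cyc_dataset}. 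Since $m = u\tfrac{n}{r} = 2$ remains unchanged while the amalgam is promoted from $r=n$ to $n$ (so that $\F^n = \tilde{\G}^{m}$ becomes $\F^n$, eliminating the proper amalgamation), the free integers $a_i$ can be chosen so that the congruences $c'_{i1}\tfrac{m}{n'_{i1}} \equiv c_{i1}\tfrac{m}{n_{i1}} + a_i u$ and $c'_{i2}\tfrac{n}{n'_{i2}} \equiv c_{i2}\tfrac{n}{n_{i2}} - a_i r$ hold; the point is that lifting redistributes the twist so that condition (iv)/(v)/(vi) of the data set are automatically inherited from those already verified for $\D$.

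For the product $\Dic_n \times \Z_m$ with $m$ odd and the triple product $\Dic_n \times \Z_m \times \Z_p$, the strategy is the same, but one must first identify the metacyclic presentation of the product. Because $m$ (and $p$) are odd and coprime to the relevant $2$-power structure, the central cyclic factors commute with both generators and can be absorbed into the cyclic normal subgroup and its complement via the Chinese Remainder Theorem: one writes $\Dic_n \times \Z_m \cong \M(2, 2nm, nm, k')$ for an appropriate twist factor $k' \equiv -1 \pmod{2n}$ and $k' \equiv 1 \pmod m$, using $\gcd(2n,m)=1$ to realize such a $k'$. The liftability under $p_\nu$ with $\nu = 2$ then proceeds exactly as in the dicyclic case, since the odd coprime factors contribute trivially to the amalgamation obstruction. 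The condition $\gcd(p,n)=1$ in the triple case is precisely what is needed so that the second Chinese-Remainder splitting is available and the twist factor can be chosen consistently across all three factors.

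The main obstacle I expect is bookkeeping rather than conceptual: one must check that the redistributed data $(c'_{ij}, n'_{ij})$ still satisfies the delicate surjectivity conditions (v) and (vi) of Definition~\ref{defn:meta_cyc_dataset}, which involve the auxiliary tuples $(p_{i'}), (q_{i'})$ and the modular products of powers of $k$. Here the crucial simplification is that the \emph{lifted} group is split, so its data set $\tilde{\D}$ has amalgam equal to $n$ and need only satisfy the split-case versions of these conditions; the generating tuples witnessing surjectivity of $\phi_{\tilde H}$ can be lifted directly from those witnessing surjectivity of $\phi_H$ by padding with the $a_i$. The congruence $k^u \equiv 1 \pmod{n}$ survives the lift because $k$ is unchanged, and the Riemann--Hurwitz balance (condition (i)) is preserved since $\nu(g-1)+1$ is exactly the genus dictated by the covering.

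Concretely, the proof runs as follows. First I would record the isomorphisms $\Dic_n \times \Z_m \cong \M(2, 2nm, nm, k')$ and $\Dic_n \times \Z_m \times \Z_p \cong \M(2, 2nmp, nmp, k'')$ via the Chinese Remainder Theorem, using the coprimality hypotheses to produce the twist factors. Next I would apply Proposition~\ref{prop:split} with $\nu = 2$ in each case, reading off that condition (i) holds because the amalgam $r$ equals $n/2$ of the modulus in each presentation, forcing $\nu = 2$. Finally I would exhibit the lifted data set $\tilde{\D}$ by choosing each $a_i$ to absorb the twist correction, and verify conditions (i)--(vi) of Definition~\ref{defn:meta_cyc_dataset} for $\tilde{\D}$ directly; since $\tilde H$ is split, these reduce to the previously established conditions for $\D$ together with a routine check that the lifted generating tuples remain surjective. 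The conclusion that each action factors via a split metacyclic action is then immediate from Proposition~\ref{prop:split}.
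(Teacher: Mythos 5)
Your overall route---rewrite each group in a single metacyclic presentation $\M(u,N,r,k)$ with the amalgam $r$ a proper divisor of the modulus $N$, then invoke Proposition~\ref{prop:split} with $\nu = N/r = 2$---is exactly what the paper intends, since the paper offers no further argument beyond calling the corollary an immediate consequence of Proposition~\ref{prop:split}. However, your Chinese Remainder step is a genuine gap: the corollary assumes only that $m,p$ are odd and $\gcd(p,n)=1$; it does \emph{not} assume $\gcd(m,n)=1$ or $\gcd(m,p)=1$. Your claimed isomorphism $\Dic_n \times \Z_m \cong \M(2,2nm,nm,k')$ requires an element of order $2nm$, whereas the maximal element order in $\Dic_n \times \Z_m$ is $\lcm(2n,m)$; e.g.\ $\Dic_6 \times \Z_3$ has maximal element order $12 < 36$, so no such presentation exists. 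The same objection kills $\M(2,2nmp,nmp,k'')$ whenever $\gcd(m,2np)>1$ (e.g.\ $m=p$, which the hypotheses permit provided $\gcd(p,n)=1$). The repair is to absorb $\Z_m$ into the \emph{quotient} side rather than the cyclic normal subgroup: with $\Z_m = \langle z \rangle$ central and $m$ odd, $G_1 = Gz$ satisfies $G_1^{2m} = (G^2)^m = F^{nm} = F^n$ (as $nm \equiv n \pmod{2n}$), giving $\Dic_n \times \Z_m \cong \M(2m,2n,n,-1)$ with no coprimality needed; for the triple product put $\Z_p = \langle w \rangle$ on the $F$-side, $F_1 = Fw$ of order $2np$---this is exactly where $p$ odd and $\gcd(p,n)=1$, i.e.\ $\gcd(p,2n)=1$, are used---yielding $\Dic_n \times \Z_m \times \Z_p \cong \M(2m,2np,np,k)$ with $k \equiv -1 \pmod{2n}$ and $k \equiv 1 \pmod{p}$. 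So your guess about the role of $\gcd(p,n)=1$ is misplaced: it is needed to fold $\Z_p$ into the cyclic normal subgroup, not to reconcile $m$ with $p$. In all three corrected presentations the amalgam is half the modulus, so $\nu = 2$ and Proposition~\ref{prop:split} produces the split lifts $\Z_{2n} \rtimes_{-1} \Z_4$, $\Z_{2n} \rtimes_{-1} \Z_{4m}$, and $\Z_{2np} \rtimes_k \Z_{4m}$, respectively.

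Two smaller points. For $\Dic_n = \M(2,2n,n,-1)$ you compute $m = u\frac{n}{r} = 2$; since the modulus here is $2n$, the correct value is $m = 2 \cdot 2n/n = 4$ (consistently with $|G| = 4$ and with the lift $\Z_{2n} \rtimes_{-1} \Z_4$ that you correctly identify). Also, your verification of condition (ii) of Proposition~\ref{prop:split}---that the adjusted tuples $(c'_{ij},n'_{ij})$ satisfy Definition~\ref{defn:meta_cyc_dataset}, in particular the surjectivity conditions (v)--(vi) for the larger group $\Z_N \rtimes_k \Z_{\nu u}$---is asserted rather than carried out; the congruences in Definition~\ref{defn:meta_cyc_dataset} change when $(u,r)$ is replaced by $(\nu u, N)$, so they are not literally ``inherited'' from $\D$, though leaving this implicit matches the level of detail of the paper itself.
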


\noindent Proposition~\ref{prop:split} and Corollary~\ref{cor:lift_meta_groups} motivate the following conjecture. 

\begin{conj}
Every non-split metacyclic action on $S_g$ lifts under a suitably chosen finite regular cyclic cover to a split metacyclic action.
\end{conj}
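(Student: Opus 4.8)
The plan is to reduce the conjecture, via Proposition~\ref{prop:split}, to a purely combinatorial existence statement about metacyclic data sets, and then to attack that statement by a parameter count. I begin with a given non-split action $H = \langle F,G\rangle \cong \M(u,n,r,k)$ on $S_g$, encoded by a metacyclic data set $\D$ with $\Gamma(\O_H) = (g_0;n_1,\dots,n_\ell)$. The well-definedness of $\M(u,n,r,k)$ forces $r(k-1)\equiv 0 \pmod n$, so $\tilde F \mapsto F$, $\tilde G \mapsto G$ defines a surjection $\pi : \Z_n \rtimes_k \Z_m \to H$ with $m = un/r$, whose kernel is the central cyclic group of order $\nu = n/r$ generated by $\tilde G^u \tilde F^{-r}$. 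Taking the regular $\nu$-fold cover $p_\nu$ of Proposition~\ref{prop:split}, the quotient orbifold is unchanged, $\O_{\tilde H} = \O_H$, and lifting the $H$-action to the split group $\tilde H = \Z_n\rtimes_k\Z_m \cong \M(m,n,n,k)$ amounts, by Theorem~\ref{thm:main}, to producing a metacyclic data set $\tilde\D$ for $\tilde H$ of the form prescribed in condition (ii) of Proposition~\ref{prop:split}.

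Concretely, the cone-point part of the lift is governed by integers $a_1,\dots,a_\ell$, where the $i$-th tuple of $\tilde\D$ is obtained from that of $\D$ by $c'_{i1}\tfrac{m}{n'_{i1}} \equiv c_{i1}\tfrac{m}{n_{i1}} + a_i u \pmod m$ and $c'_{i2}\tfrac{n}{n'_{i2}} \equiv c_{i2}\tfrac{n}{n_{i2}} - a_i r \pmod n$; geometrically, $a_i$ records multiplication of the lift of $\phi_H(\xi_i)$ by the central deck element $(\tilde G^u\tilde F^{-r})^{a_i}$. The conjecture is therefore equivalent to the assertion that, for every non-split $\D$, one may choose the $a_i$ so that $\tilde\D$ satisfies all of Definition~\ref{defn:meta_cyc_dataset} for $\M(m,n,n,k)$. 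Here the constraints decouple: the torsion conditions hold for each $i$ separately, so each $a_i$ is confined to an order-preserving residue set $A_i$; the $\Z_m$-part of the long relation reduces to the single congruence $\sum_i a_i \equiv -w \pmod \nu$; the $\Z_n$-part asks that a weighted sum $A'$ depending on the $a_i$ be an appropriate residue modulo $n$; and surjectivity is condition (v) or (vi). I would first treat $g_0 \geq 1$, where the commutators $[\alpha_i,\beta_i]$ contribute arbitrary multiples of $d = \gcd(n,k-1)$ to the $\Z_n$-part (since $[\tilde G,\tilde F] = \tilde F^{1-k}$) and the hyperbolic generators can be sent to $\tilde G, \tilde F$ to force surjectivity; this is exactly the slack exploited for the families $\Dic_n$, $\Dic_n\times\Z_m$, $\Dic_n\times\Z_m\times\Z_p$ of Corollary~\ref{cor:lift_meta_groups}, and I expect a uniform argument there.

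The hard part will be the planar case $g_0 = 0$. There are no commutators, so the $\Z_n$-congruence must hold on the nose, and surjectivity must be extracted from the images of the $\xi_i$ alone through the delicate condition (v) --- the same condition that is the principal source of difficulty already in the genus-zero theory. The crux is to show that the order-preserving sets $A_i$ are large enough that the $a_i$ can simultaneously solve $\sum_i a_i \equiv -w \pmod\nu$, annihilate $A'$ modulo $n$, and retain a generating image, and to do so \emph{uniformly} over all non-split $\M(u,n,r,k)$ and all admissible signatures $(0;n_1,\dots,n_\ell)$ at once. I expect the decisive leverage to come from the phrase ``suitably chosen'' in the statement: rather than fixing the minimal cover $\nu = n/r$, one is free to replace it by a larger cyclic cover, or by an iterated tower of cyclic covers, thereby enlarging the central kernel and loosening both the residue sets $A_i$ and the rigid $\Z_n$-congruence. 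Identifying such a cover for which the genus-zero constraints become simultaneously solvable for every non-split group --- and proving that it does so --- is, in my judgment, the genuine obstacle, and is precisely why the statement is posed as a conjecture rather than established as a theorem.
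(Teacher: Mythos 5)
You have not produced a proof, and to be clear, neither does the paper: the statement you were given is posed there as a conjecture, motivated only by Proposition~\ref{prop:split} and the particular families $\Dic_n$, $\Dic_n\times\Z_m$, $\Dic_n\times\Z_m\times\Z_p$ of Corollary~\ref{cor:lift_meta_groups}, so there is no proof in the paper to compare yours against. That said, your reduction is set up correctly and is faithful to the paper's machinery: the relation $G^{-1}F^rG = F^{rk}$ together with the centrality of $G^u = F^r$ forces $r(k-1)\equiv 0\pmod n$, so $\tilde G^u\tilde F^{-r}$ is indeed central of order $\nu = n/r$ in $\Z_n\rtimes_k\Z_m$ with $m = un/r$, the kernel of $\pi$ is exactly $\langle \tilde G^u\tilde F^{-r}\rangle$ by an order count, and your interpretation of the integers $a_i$ in condition (ii) of Proposition~\ref{prop:split} as twisting each lifted generator image by a power of the central deck element is the right geometric reading. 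Your observation that Proposition~\ref{prop:split} rigidly fixes $\nu = n/r$ for the target $\M(\nu u,n,n,k)$, while the conjecture's phrase ``suitably chosen'' permits other covers or towers, is also accurate and is plausibly where any eventual proof must gain room.

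The gap is that the plan stops exactly where the difficulty begins, and even the part you describe as tractable is not actually carried out. For $g_0\geq 1$ you ``expect a uniform argument'' from the commutator slack $[\tilde G,\tilde F] = \tilde F^{1-k}$, but you never verify the prior, local obstruction: condition (ii) of Definition~\ref{defn:meta_cyc_dataset} applied to $\tilde\D$, with the cone orders $n_i$ unchanged as Proposition~\ref{prop:split} requires, demands that for each $i$ some twist $a_i$ makes the order of $\tilde G^{c_{i1}m/n_{i1}}\tilde F^{c_{i2}n/n_{i2}}(\tilde G^u\tilde F^{-r})^{a_i}$ equal to $n_i$ rather than a proper multiple of it dividing $\nu n_i$; this is the precise statement that the central kernel acts freely on the cover, it can fail for individual choices of $a_i$, and you give no argument that the admissible sets $A_i$ are nonempty, let alone rich enough to also satisfy the global congruence modulo $\nu$ and the $\Z_n$-congruence of condition (iv). For $g_0 = 0$ you explicitly concede that the simultaneous solvability of the long relation and the surjectivity condition (v) is open, and your closing suggestion --- enlarge the cover or iterate cyclic covers --- is offered without any candidate construction or even a criterion for choosing $\nu$. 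So what you have is a sound reformulation of the conjecture as a combinatorial solvability problem for the $a_i$, together with an honest identification of the same obstacle that evidently led the authors to state it as a conjecture; as a proof it is incomplete at every point past the reduction.
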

  		
 \section{Geometric realizations of the lifts of non-split metacyclic actions}
 \label{sec:hyp_str}
In this section, we use Corollary~\ref{cor:lift_meta_groups} to provide explicit geometric realizations of the lifts of some non-split metacyclic actions on $S_{10}$ and $S_{11}$. These realizations implicitly assume the theory developed in~\cite{NKA,PKS}. The associated weak conjugacy classes of these actions are represented by the metacyclic data sets listed in Tables~\ref{tab:meta_dsets1}-\ref{tab:meta_dsets2} in Section~\ref{sec:classify}.

\pagebreak
 
 \begin{figure}[htbp]
 	\centering
 	\labellist
 	\tiny
 	\pinlabel $\G$ at 420 422
 	\pinlabel $\frac{\pi}{2}$ at 425 388
 	\pinlabel $(1,6)$ at 285 570
 	\pinlabel $(1,6)$ at 285 480
 	\pinlabel $(1,6)$ at 290 73
 	\pinlabel $(1,6)$ at 290 -15 
 	\pinlabel $(5,6)$ at 472 271
 	\pinlabel $(5,6)$ at 599 271
 	\pinlabel $(5,6)$ at -28 271
 	\pinlabel $(5,6)$ at 99 271
 	\endlabellist
 	\includegraphics[width=45ex]{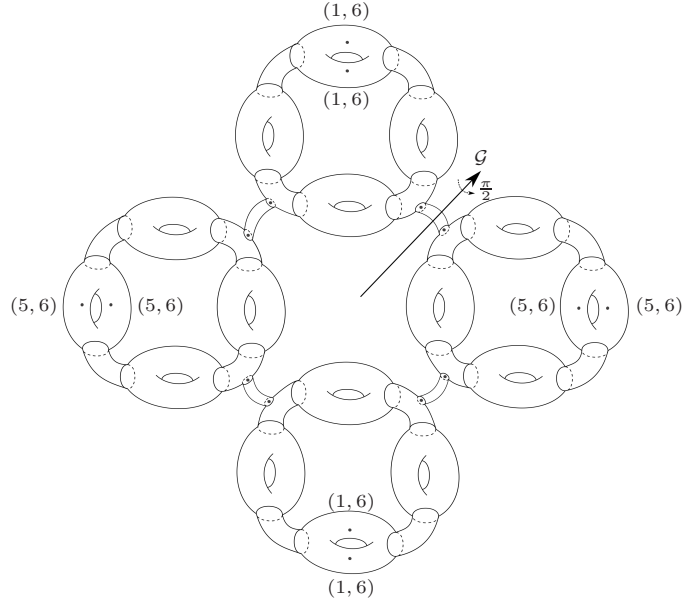}
 	\caption{The realization of a $H=\mathbb{Z}_{12} \rtimes_{-1} \mathbb{Z}_4$-action on $S_{21}$ which is the lift of a $\Dic_6$-action on $S_{11}$ under the regular cyclic cover $p_2$. Here, $H = \langle \F,\G \rangle$, where $D_{G} = (4,6,1;)$ and $D_F = (12,1;((1,6),2),((5,6),2))$. Note that the $\G$ maps each orbit  of the $\langle \F \rangle$-action of size 2 with local rotation angle $2\pi/6$ to an orbit with local rotation angle $10\pi/6$ (and vice versa).}
 	\label{fig:Z12_Z4_S21}
 \end{figure}
 
   \begin{figure}[htbp]
   	\centering
   	\labellist
   	\tiny
   	\pinlabel $(19,20)$ at 175 100
   	\pinlabel $(19,20)$ at 175 55
   	\pinlabel $(1,20)$ at 330 100
   	\pinlabel $(1,20)$ at 330 55
   	\pinlabel $(1,4)$ at -22 75
   	\pinlabel $(1,4)$ at 545 80
   	\pinlabel $(1,2)$ at 250 110
   	\pinlabel $(1,2)$ at 250 45
   	\endlabellist
   	\includegraphics[width=52ex]{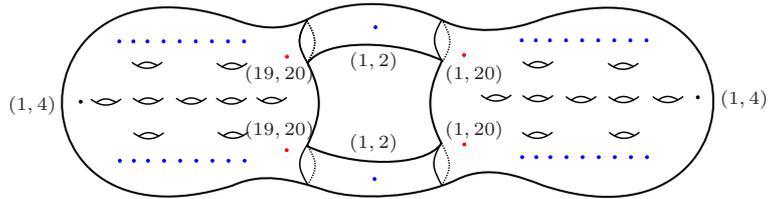}
   	\caption{The realization of a $H=\mathbb{Z}_{20} \rtimes_{-1} \mathbb{Z}_4$-action on $S_{19}$ which is the lift of a $\Dic_{10}$-action on $S_{10}$ under the regular cyclic cover $p_2$. Here, $H = \langle \F,\G \rangle$, where $D_{G} = (4,0;((1,4),2),((1,2),19))$ and $D_F = (20,0;((1,20),2),((19,20),2))$. Note that the four fixed points of $\F$ (marked in red) form an orbit of size 4 under the $\langle \G \rangle$-action where each fixed point with local rotation $2\pi/20$ is mapped to fixed point with local rotation $38\pi/20 $ (and vice versa). The point marked in blue are distinct size $2$ orbits of the $\langle \G \rangle$-action, while the points marked in black are the fixed points of $\G$.}
   	\label{fig:Dic10S19}
   \end{figure}
 
  \begin{figure}[H]
  	\centering
  	\labellist
  	\tiny
  	\pinlabel $(3,4)$ at 250 90
  	\pinlabel $(3,4)$ at 250 50
  	\pinlabel $(1,4)$ at 360 92
  	\pinlabel $(1,4)$ at 360 52
  	\pinlabel $(1,4)$ at 172 72
  	\pinlabel $(1,4)$ at 445 75
   	\pinlabel $(1,2)$ at 305 55
   	\pinlabel $(1,2)$ at 305 85
   	\pinlabel $(1,2)$ at 220 90
   	\pinlabel $(1,2)$ at 220 50
   	\pinlabel $(1,2)$ at 390 90
   	\pinlabel $(1,2)$ at 390 53
  	\endlabellist
  	\includegraphics[width=58ex]{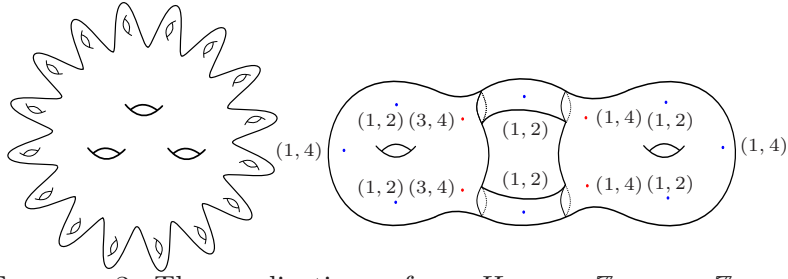}
  	\caption{The realization of a $H=\mathbb{Z}_{4} \rtimes_{-1} \mathbb{Z}_4$-action on $S_{19}$ which is the lift of a $Q_8$-action on $S_{10}$ under the regular cyclic cover $p_2$. Here, $H = \langle \F,\G \rangle$, where $D_F = (4,4;((1,4),2),((3,4),2))$ and $D_{G} = (4,4;((1,4),2),((1,2),3))$. Note that the $H$-action on $S_{19}$ cyclically permutes the genera in the petals of the subfigure on the left. This $H$-action is induced by an analogous action of an $H' = \langle \F', \G' \rangle \cong \mathbb{Z}_{4} \rtimes_{-1} \mathbb{Z}_4$ on $S_3$ (shown in the subfigure on the right) with $D_{F'} = (4,0;((1,4),2),((3,4),2))$ and $D_{G'} = (4,0;((1,4),2),((1,2),3))$. Note that the four fixed points of $\F'$ (marked in red) form an orbit of size 4 under the $\langle \G' \rangle$-action where each fixed point with local rotation $2\pi/4$ is mapped to fixed point with local rotation $6\pi/4 $ (and vice versa). The remaining fixed (and orbit) points of the $\langle \G' \rangle$-action are marked in blue.} 
  	\label{fig:Z4_Z4_S19}
  \end{figure}
  
  \section{Classification of the weak conjugacy classes in \\ $\Mod(S_{10})$ and $\Mod(S_{11})$}
 \label{sec:classify}
 
 In this section, we will use Theorem~\ref{thm:main} to classify the weak conjugacy classes of the non-split metacyclic subgroups of $\Mod(S_{10})$ and $\Mod(S_{11})$. For brevity, we will further assume the following equivalence of the metacyclic data sets (i.e. the weak conjugacy classes). 
 \begin{defn}
 	\label{defn:eq_data_sets}
 	Two metacyclic data sets 
 	\begin{gather*}
 	\D=((u \cdot n,r,k),g_0;[(c_{11},n_{11}),(c_{12},n_{12}),n_1], \ldots , [(c_{\ell 1},n_{\ell 1}),(c_{\ell 2},n_{\ell 2}),n_{\ell} ]) \\ \text{ and } 
 	\\ \D'=((u \cdot n,r,k),g_0;[(c'_{11},n'_{11}),(c'_{12},n'_{12}),n'_1], \ldots , [(c'_{\ell 1},n'_{\ell 1}),(c'_{\ell 2},n'_{\ell 2}),n'_{\ell} ])
 	\end{gather*} 
 	\noindent are said to be \textit{equivalent} if for each tuple $[(c'_{i1},n'_{i1}),(c'_{i2},n'_{i2}),n'_i]$, there exists a unique tuple $[(c_{j1},n_{j1}),(c_{j2},n_{j2}),n_j]$ satisfying the following conditions:
 	\begin{enumerate}[(i)]
 		\item $n'_i = n_j,$
 		\item $c'_{i1}\frac{m}{n'_{i1}} \equiv c_{j1}\frac{m}{n_{j1}} + au \pmod{m}$, where $m = u \frac{n}{r}$, and
 		\item $c'_{i2} \frac{n}{n'_{i2}} \equiv c_{j2} \frac{n}{n_{j2}} k^{a_i} + b_i (k^{c_{j1} \frac{m}{n_{j1}}} -1) - ar \pmod{n}$ for some $a_i,b_i,a \in \mathbb{Z}.$
 	\end{enumerate}
 \end{defn}
 \noindent Note that equivalent data sets $\D$ and $\D'$ as in Definition~\ref{defn:eq_data_sets} satisfy 
 $\D_i' = \D_i$, for $i = 1,2$. We will now provide a classification of the weak conjugacy classes of finite non-split metacyclic subgroups of $\Mod(S_{10})$ and $\Mod(S_{11})$ (up to this equivalence) in Tables~\ref{tab:meta_dsets1} and~\ref{tab:meta_dsets2}, respectively. 
 
  \begin{landscape}
  	\begin{table}[htbp]
  		\small
  		\begin{center}
  			\resizebox{21cm}{!}
  			{\begin{tabular}{|c|c|c|}
  					\hline
  					\textbf{Group} & \textbf{Weak conjugacy classes in $\Mod(S_{10})$} & \textbf{Cyclic factors $[D_G;D_F]$}\\ \hline
  					\multirow{5}{*}{$\M(2,4,2,-1)$} & $((2 \cdot 4,2,-1),0;[(0,1),(1,2),2],[(1,4),(0,1),4]_3,[(0,1),(1,4),4],[(1,4),(3,4),4])$ & $[(4,1;(1,4),(3,4),((1,2),6));(4,0;((1,4),3),((3,4),3)((1,2),2))]$\\ \cline{2-3} 
  					& \makecell{$((2 \cdot 4,2,-1),0;[(0,1),(1,2),2],[(1,4),(0,1),4],[(0,1),(1,4),4]_3,[(3,4),(1,4),4])$} & $[(4,0;((1,4),3),((3,4),3)((1,2),2));(4,1;(1,4),(3,4),((1,2),6))]$\\ \cline{2-3}
  					& \makecell{$((2 \cdot 4,2,-1),0;[(0,1),(1,2),2],[(1,4),(0,1),4],[(0,1),(3,4),4],[(1,4),(1,4),4]_3)$} & $[(4,1;(1,4),(3,4),((1,2),6));(4,1;(1,4),(3,4),((1,2),6))]$\\ \cline{2-3}
  					& \makecell{$((2 \cdot 4,2,-1),0;[(0,1),(1,2),2]_4,[(1,4),(0,1),4],[(0,1),(1,4),4],[(3,4),(1,4),4])$} & $[(4,0;(1,4),(3,4),((1,2),10));(4,0;(1,4),(3,4),((1,2),10))]$\\ \cline{2-3}
  					& \makecell{$((2 \cdot 4,2,-1),1;[(1,4),(0,1),4],[(0,1),(1,4),4],[(1,4),(1,4),4])$} & $[(4,2;(1,4),(3,4),((1,2),2));(4,2;(1,4),(3,4),((1,2),2))]$\\
  					\hline
  					\multirow{2}{*}{$\M(2,8,4,-1)$} & $((2 \cdot 8,4,-1),0;[(1,4),(0,1),4],[(1,4),(1,8),4],[(0,1),(1,4),4],[(0,1),(1,8),8])$ & $[(4,1;(1,4),(3,4),((1,2),6));(8,0;(1,8),(7,8),(1,4),(3,4),((1,2),2))]$\\ \cline{2-3}
  					& \makecell{$((2 \cdot 8,4,-1),0;[(1,4),(0,1),4],[(1,4),(7,8),4],[(0,1),(1,4),4],[(0,1),(3,8),8])$} & $[(4,1;(1,4),(3,4),((1,2),6));(8,0;(3,8),(5,8),(1,4),(3,4),((1,2),2))]$\\
  					\hline
  					\multirow{2}{*}{$\M(2,12,6,7)$} & $((2 \cdot 12,6,7),0;[(1,4),(1,12),12],[(3,4),(1,3),12],[(0,1),(1,12),12])$ & $[(4,2;(1,4),(3,4),((1,2),2));(12,0;(1,12),(7,12),((1,6),2))]$\\ \cline{2-3}
  					& \makecell{$((2 \cdot 12,6,7),0;[(1,4),(1,6),12],[(3,4),(5,12),12],[(0,1),(5,12),12])$} & $[(4,2;(1,4),(3,4),((1,2),2));(12,0;(5,12),(11,12),((5,6),2))]$\\
  					\hline
  					\multirow{4}{*}{$\M(2,20,10,-1)$} & $((2 \cdot 20,10,-1),0;[(1,4),(0,1),4],[(1,4),(9,20),4],[(0,1),(1,20),20])$ & $[(4,0;(1,4),(3,4),((1,2),10));(20,0;(1,20),(19,20),((1,2),2))]$\\ \cline{2-3} 
  					& \makecell{$((2 \cdot 20,10,-1),0;[(1,4),(0,1),4],[(1,4),(7,20),4],[(0,1),(3,20),20])$} & $[(4,0;(1,4),(3,4),((1,2),10));(20,0;(3,20),(17,20),((1,2),2))]$\\ \cline{2-3}
  					& \makecell{$((2 \cdot 20,10,-1),0;[(1,4),(0,1),4],[(1,4),(3,20),4],[(0,1),(7,20),20])$} & $[(4,0;(1,4),(3,4),((1,2),10));(20,0;(7,20),(13,20),((1,2),2))]$\\ \cline{2-3}
  					& \makecell{$((2 \cdot 20,10,-1),0;[(1,4),(0,1),4],[(1,4),(1,20),4],[(0,1),(9,20),20])$} & $[(4,0;(1,4),(3,4),((1,2),10));(20,0;(9,20),(11,20),((1,2),2))]$\\
  					\hline
  					
  				\end{tabular}}
  				\caption{The weak conjugacy classes of finite non-split metacyclic subgroups of $\Mod(S_{10})$.(*The suffix refers to the multiplicity of the tuple in the non-split metacyclic data set.)}
  				\label{tab:meta_dsets1}
  			\end{center}
  		\end{table}

  		\begin{table}[H]
  			\small
  			\begin{center}
  				\resizebox{21cm}{!}
  				{\begin{tabular}{|c|c|c|}
  						\hline
  						\textbf{Group} & \textbf{Weak conjugacy classes in $\Mod(S_{11})$} & \textbf{Cyclic factors $[D_G;D_F]$}\\ \hline
  						$\M(2,12,6,-1)$ & $((2 \cdot 12,6,-1),1;[(0,1),(1,6),6])$ & $[(4,3;((1,2),2));(12,1;(1,6),(5,6))]$\\ \hline
  						\multirow{8}{*}{$\M(4,8,4,-1)$} & $((4 \cdot 8,4,-1),0;[(1,8),(0,1),8],[(7,8),(7,8),8],[(0,1),(1,8),8])$ & $[(8,0;(1,8),(5,8),(1,4),((3,4),2),(1,2));(8,0;((1,8),2),((7,8),2)((1,2),2))]$\\ \cline{2-3} 
  						& \makecell{$((4 \cdot 8,4,-1),0;[(1,8),(1,8),8],[(7,8),(0,1),8],[(0,1),(1,8),8])$} & $[(8,0;(3,8),(7,8),((1,4),2),(3,4),(1,2));(8,0;((1,8),2),((7,8),2)((1,2),2))]$\\ \cline{2-3}
  						& \makecell{$((4 \cdot 8,4,-1),0;[(1,8),(0,1),8],[(7,8),(5,8),8],[(0,1),(3,8),8])$} & $[(8,0;(1,8),(5,8),(1,4),((3,4),2),(1,2));(8,0;((3,8),2),((5,8),2)((1,2),2))]$\\ \cline{2-3}
  						& \makecell{$((4 \cdot 8,4,-1),0;[(1,8),(3,8),8],[(7,8),(0,1),8],[(0,1),(3,8),8])$} & $[(8,0;(3,8),(7,8),((1,4),2),(3,4),(1,2));(8,0;((3,8),2),((5,8),2)((1,2),2))]$\\ \cline{2-3}
  						& \makecell{$((4 \cdot 8,4,-1),0;[(1,8),(1,8),8],[(5,8),(0,1),8],[(1,4),(1,8),8])$} & $[(8,0;(1,8),(5,8),((1,4),3),(1,2));(8,1;(1,4),(3,4)((1,2),2))]$\\ \cline{2-3}
  						& \makecell{$((4 \cdot 8,4,-1),0;[(1,8),(3,8),8],[(5,8),(0,1),8],[(1,4),(3,8),8])$} & $[(8,0;(1,8),(5,8),((1,4),3),(1,2));(8,1;(1,4),(3,4)((1,2),2))]$\\ \cline{2-3}
  						& \makecell{$((4 \cdot 8,4,-1),0;[(3,8),(1,8),8],[(3,8),(0,1),8],[(1,4),(1,8),8])$} & $[(8,0;(3,8),(7,8),((3,4),3),(1,2));(8,1;(1,4),(3,4)((1,2),2))]$\\ \cline{2-3}
  						& \makecell{$((4 \cdot 8,4,-1),0;[(3,8),(3,8),8],[(3,8),(0,1),8],[(1,4),(3,8),8])$} & $[(8,0;(3,8),(7,8),((3,4),3),(1,2));(8,1;(1,4),(3,4)((1,2),2))]$\\
  						\hline
  						$\M(2,20,10,11)$ & $((2 \cdot 20,10,11),1;[(0,1),(1,2),2])$ & $[(4,1;((1,2),10));(20,1;((1,2),2))]$\\ \hline
  					\end{tabular}}
  					\caption{The weak conjugacy classes of finite non-split metacyclic subgroups of $\Mod(S_{11})$ other than quaternions.(*The suffix refers to the multiplicity of the tuple in the non-split metacyclic data set.)}
  					\label{tab:meta_dsets2}
  				\end{center}
  			\end{table}
  		\end{landscape}

 \section*{Acknowledgements}
 The second author was partly supported by a UGC-JRF fellowship. The authors would also like to thank Dr. Neeraj Kumar Dhanwani for some helpful discussions.

 \bibliographystyle{plain} 
 \bibliography{metacyclic_action}

\begin{thebibliography}{10}

\bibitem{ADDR}
Nikita Agarwal, Soumya Dey, Neeraj~K Dhanwani, and Kashyap Rajeevsarathy.
\newblock Liftable mapping class groups of regular cyclic covers.
\newblock {\em arXiv preprint arXiv:1911.05682, to apppear in the Houston
  Journal of Mathematics}, 2021.

\bibitem{TB}
Thomas Breuer.
\newblock {\em Characters and automorphism groups of compact {R}iemann
  surfaces}, volume 280 of {\em London Mathematical Society Lecture Note
  Series}.
\newblock Cambridge University Press, Cambridge, 2000.

\bibitem{DR}
Neeraj~K. Dhanwani and Kashyap Rajeevsarathy.
\newblock Commuting conjugates of finite-order mapping classes.
\newblock {\em Geom. Dedicata}, 209:69--93, 2020.

\bibitem{NKA}
Neeraj~K Dhanwani, Kashyap Rajeevsarathy, and Apeksha Sanghi.
\newblock Split metacyclic actions on surfaces.
\newblock {\em arXiv preprint arXiv:2007.08279, to apppear in the New York
  Journal of Mathematics}, 2022.

\bibitem{H1}
W.~J. Harvey.
\newblock Cyclic groups of automorphisms of a compact {R}iemann surface.
\newblock {\em Quart. J. Math. Oxford Ser. (2)}, 17:86--97, 1966.

\bibitem{CEH}
C.~E. Hempel.
\newblock Metacyclic groups.
\newblock {\em Comm. Algebra}, 28(8):3865--3897, 2000.

\bibitem{SK1}
Svetlana Katok.
\newblock {\em Fuchsian groups}.
\newblock Chicago Lectures in Mathematics. University of Chicago Press,
  Chicago, IL, 1992.

\bibitem{SK}
Steven~P. Kerckhoff.
\newblock The {N}ielsen realization problem.
\newblock {\em Ann. of Math. (2)}, 117(2):235--265, 1983.

\bibitem{M1}
Alexander~Murray Macbeath and HC~Wilkie.
\newblock {\em Discontinuous groups and birational transformations:[Summer
  School], Queen's College Dundee, University of St. Andrews}.
\newblock [Department of Math.], Queen's College, 1961.

\bibitem{MW}
Dan Margalit and Rebecca~R. Winarski.
\newblock Braids groups and mapping class groups: the {B}irman-{H}ilden theory.
\newblock {\em Bull. Lond. Math. Soc.}, 53(3):643--659, 2021.

\bibitem{JN1}
Jakob Nielsen.
\newblock {\em Die Struktur periodischer Transformationen von Fl{\"a}chen},
  volume~15.
\newblock Levin \& Munksgaard, 1937.

\bibitem{JN}
Jakob Nielsen.
\newblock Abbildungsklassen endlicher {O}rdnung.
\newblock {\em Acta Math.}, 75:23--115, 1943.

\bibitem{PKS}
Shiv Parsad, Kashyap Rajeevsarathy, and Bidyut Sanki.
\newblock Geometric realizations of cyclic actions on surfaces.
\newblock {\em J. Topol. Anal.}, 11(4):929--964, 2019.

\bibitem{KP}
Kashyap Rajeevsarathy and Prahlad Vaidyanathan.
\newblock Roots of {D}ehn twists about multicurves.
\newblock {\em Glasg. Math. J.}, 60(3):555--583, 2018.

\bibitem{AS}
Andreas Schweizer.
\newblock Metacyclic groups as automorphism groups of compact {R}iemann
  surfaces.
\newblock {\em Geom. Dedicata}, 190:185--197, 2017.

\bibitem{WT}
William~P. Thurston.
\newblock {\em The Geometry and Topology of Three-Manifolds}.
\newblock \newline notes available at:
  {http://www.msri.org/communications/books/gt3m/PDF}.

\bibitem{TWT}
Thomas~W. Tucker.
\newblock Finite groups acting on surfaces and the genus of a group.
\newblock {\em J. Combin. Theory Ser. B}, 34(1):82--98, 1983.

\bibitem{AW}
A~Wiman.
\newblock Ueber die hyperelliptischen curven und diejenigen vom geschlechte p=
  3, welche eindeutigen transformationen in sich zulassen” and “ueber die
  algebraischen curven von den geschlechtern p= 4, 5 und 6 welche eindeutigen
  transformationen in sich besitzen”.
\newblock {\em Svenska Vetenskaps-Akademiens Hadlingar, Stockholm}, 96, 1895.

\end{thebibliography}

\end{document}